\definecolor{shadecolor}{gray}{0.85}
\newcommand{\arraystrech}[5]
\newtheorem{Lemma}{Lemma}
\newtheorem{Theorem}{Therorem}
\newtheorem{rem}{Remark}
\DeclareSymbolFont{largesymbol}{OMX}{yhex}{m}{n}
\DeclareMathAccent{\widehat}{\mathord}{largesymbol}{"62}
\begin{document}
\pagestyle{plain}

\title{A Mixed Finite Element Method for Multi-Cavity Computation in Incompressible
Nonlinear Elasticity\thanks{The research was supported by the NSFC
projects 11171008 and 11571022.}}

\author{Weijie Huang, \hspace{1mm} Zhiping Li\thanks{Corresponding author,
email: lizp@math.pku.edu.cn} \\ {\small LMAM \& School of Mathematical
Sciences, Peking University, Beijing 100871, China}}

\date{}

\maketitle
\begin{abstract}
A mixed finite element method combining an
iso-parametric $Q_2$-$P_1$ element and an iso-parametric $P_2^+$-$P_1$
element is developed for the computation of multiple cavities in incompressible
nonlinear elasticity. The method is analytically proved to be locking-free and convergent,
and it is also shown to be numerically accurate and efficient by numerical experiments.
Furthermore, the newly developed accurate method enables us to find an interesting new bifurcation
phenomenon in multi-cavity growth.
\end{abstract}

\noindent \textbf{Key words}:
multiple cavitation computation, incompressible nonlinear elasticity,
mixed finite element method,
locking-free, convergent

\section{Introduction}

Cavitation phenomenon, which exhibits sudden dramatic growth of pre-exist small voids under loads
exceeding certain criteria, is first systematically modeled and analyzed by Gent \& Lindley
\cite{Gent and Lindley} in 1958. It is considered one of the most important failure phenomenon
in nonlinear elasticity, and its better understanding is crucial to explore the properties
of elastic materials.

Let $\Omega \subset \mathbb{R}^{n}\,(n=2,3)$ be a simply connected domain with sufficiently
smooth boundary $\partial \Omega$, and let
$B_{\rho_k}(\bm{x}_k)=\{\bm{x}\in\mathbb{R}^n:|\bm{x}-\bm{x}_k|<\rho_k\}$ and
$\bigcup_{k=1}^{K} B_{\rho_k}(\bm{x}_k)\subset \Omega$. Let
$\Omega_{\rho} = \Omega \setminus \bigcup_{k=1}^{K} B_{\rho_k}(\bm{x}_k)$
be the domain occupied by an elastic body in its reference
configuration, where $B_{\rho_k}(\bm{x}_k)$ denotes the pre-existing defects of radii
$\rho_k\ll 1$ centered at $\bm{x}_k$, $k=1,\cdots,K$. Then, in incompressible elastic
materials, the multi-cavitation problem can be expressed
as to find a deformation $\bm{u}$ to minimize the total energy
\begin{equation}
  \label{functional}
  E(\bm{u})=\int_{\Omega_{\rho}}W_0(\nabla \bm{u}(\bm{x})) \,
  \mathrm{d}\bm{x},
\end{equation}
in the set of admissible deformation functions
\begin{equation}
  \label{admissible set incompressible}
  \mathcal{A}_I=\{\bm{u}\in W^{1,s}(\Omega_{\rho};\mathbb{R}^n) \ \mbox{is 1-to-1
  a.e.}: \bm{u}|_{\partial\Omega}=\bm{u}_0,\; \det \nabla \bm{u} =1, \mbox{ a.e.} \},
\end{equation}
where $W_0: M^{n \times n}_+\rightarrow\mathbb{R}^+$ is the stored energy
density function of the material with $M_+^{n \times n}$ being the set of $n \times n$
matrices of positive determinant, and $n-1<s<n$ is a given Sobolev index,
and where a displacement boundary condition $\bm{u}=\bm{u}_0$ is imposed on
$\partial_D\Omega_{\rho} = \partial \Omega$, and a traction free boundary
condition is imposed on $\partial_N\Omega_{\rho} = \cup_{k=1}^K B_{\rho_k}(\bm{x}_k)$.
Without loss of generality, we consider a typical energy density
for nonlinear elasticity given as
\begin{equation}
\label{energy density}
\begin{aligned}
W(F) = \mu |F|^s + d(\det F),\quad \forall F\in M_+^{n\times n},
\end{aligned}
\end{equation}
where $\mu$ is material parameter, and $d(\det F) = \kappa (\det F - 1)^2 + d_1(\det F)$
with $\kappa>0$ and $d_1:\mathbb{R}_+\to\mathbb{R}_+$ being a strictly convex function satisfying
\begin{equation}
\begin{aligned}
\label{property of g}
d_1(\xi)\to+\infty\ \text{as}\ \xi\to 0,\ \text{and}\
\dfrac{d_1(\xi)}{\xi}\to+\infty \ \text{as}\ \xi\to +\infty.
\end{aligned}
\end{equation}
Notice that, even though for incompressible nonlinear elastic materials, $d(\cdot)$ is
just a constant as the determinant of any admissible deformation in $\mathcal{A}_I$
equals 1 a.e., the term plays an important role in the proof of the convergence of
the numerical cavitation solutions to the mixed formulation given below.

To relax the rather restrictive condition $\det \nabla \bm{u} =1, \mbox{ a.e.}$ appeared in
$\mathcal{A}_I$, a mixed formulation of the following
form (see \cite{Fortion1991,Zienkiewicz}) is usually used in computation:
\begin{equation}
\label{mixed formulation}
(\bm{u},p) = \arg \sup_{p \in L^2(\Omega_{\rho})} \inf_{\bm{u} \in \mathcal{A}}
E(\bm{u},p),
\end{equation}
where $p\in L^2(\Omega_{\rho})$ is a pressure like Lagrangian multiplier introduced
to relax the constraint of incompressibility, and where the Lagrangian functional $E(\bm{u},p)$
and the set of admissible deformations $\mathcal{A}$ are defined as
\begin{equation}
\label{mixed functional}
\begin{aligned}
E(\bm{u},p)=\int_{\Omega_{\rho}}\left(W(\nabla \bm{u})-p\left(\det\nabla\bm{u}-1
\right)\right) \,\mathrm{d}\bm{x},
\end{aligned}
\end{equation}
\begin{equation}
  \label{admissible set}
\mathcal{A}=\{\bm{u}\in W^{1,s}(\Omega_{\rho};\mathbb{R}^n) \ \mbox{is 1-to-1
a.e.},\bm{u}|_{\partial_D\Omega_\rho} = \bm{u}_0 \}.
\end{equation}

The nonlinear saddle point problem \eqref{mixed functional}-\eqref{admissible set}
with energy density \eqref{energy density} leads to the  mixed displacement/traction
boundary value problem of the Euler-Lagrange equation:
\begin{eqnarray}
\label{Euler Lagrange equation-1}
\operatorname{div} \big(D_F W(\nabla\bm{u}) - p\operatorname{cof}\nabla \bm{u}\big)
&=& 0,\ \quad \text{in}\,\,\,  \Omega_{\rho},\\
\label{Euler Lagrange equation-2}
\det\nabla\bm{u} &=& 1,\ \quad  \text{in}\,\,\, \Omega_{\rho},\\
\label{Euler Lagrange equation-3}
\big(D_F W(\nabla\bm{u})-p\operatorname{cof}\nabla \bm{u}\big)\bm{n} &=& \bm{0},
\ \quad \text{on} \ \cup_{k=1}^K\partial B_{\rho_k}(\bm{x}_k),\\
\label{Euler Lagrange equation-4}
\bm{u} &=& \bm{u}_0, \quad\! \text{on} \ \partial_D\Omega_\rho.
\end{eqnarray}

One of the main difficulties of numerical cavitation computation comes from the very large
anisotropic deformation near the cavities, which, if not properly approximated,
can cause mesh entanglement corresponding to nonphysical material interpenetration.
In recent years, successful quadratic iso-parametric and dual-parametric finite element methods
have been developed for the cavitation computation for compressible nonlinear elastic materials
(\cite{LianLi2011,LianLi20112,SuLiRectan,SuLi2018}). However, direct application of these
methods to the case of incompressible elasticity generally encounters the barrier of
the locking effect. More recently, a dual-parametric mixed finite elements (DP-Q2-P1)
based on the saddle point problem \eqref{mixed functional}-\eqref{admissible set} is
established by Huang and Li \cite{HuangLi2017}, which is shown to be locking-free,
convergent and effective.

In the present paper, we develop a mixed finite element method combining an
iso-parametric $Q_2$-$P_1$ element and an iso-parametric $P_2^+$-$P_1$
element for the computation of multiple cavities in incompressible
nonlinear elasticity. By extending and elaborating the techniques used in \cite{HuangLi2017},
we are able to analytically prove that the method is locking-free and convergent.
A damped Newton method is applied to solve the discrete Euler-Lagrange equation.
Our numerical experiments show that the method is numerically efficient.
Furthermore, the newly developed accurate method enable us to find an interesting new bifurcation
phenomenon in multi-cavity growth. It is worth mentioning here, if the displacement
boundary condition \eqref{Euler Lagrange equation-4} is replaced by a traction boundary
condition, the results of this paper still hold, and the proof is essentially the same.

The rest of the paper is organized as follows. In \S~2, we introduce the iso-parametric
mixed finite element method. The locking-free and stability analysis of the method is
given in \S~3. The convergence analysis of the finite element solutions is
given in \S~4. Numerical experiments
and results are presented in \S~5 to show the accuracy and efficiency of the method.
Some concluding remarks are given in \S~6.

\section{The mixed finite element method}

In this section, we present an iso-parametric $P_2^+$-$P_1$ curve edged triangular
element and an iso-parametric $Q_2$-$P_1$ curve edged rectangular element, and establish
a mixed finite element method by introducing a specially designed mesh to
couple the two elements for the computation of multiple cavities in incompressible
nonlinear elasticity based on a weak form of the Euler-Lagrange equation
\eqref{Euler Lagrange equation-1}-\eqref{Euler Lagrange equation-4}.

\subsection{The iso-parametric $P_2^+$-$P_1$ element}

The standard $P_2^+$-$P_1$ mixed triangular element $(\hat{T}_t, \hat{P}, \hat{\Sigma})$
is defined as
\begin{align*}
\begin{cases}
\hat{T}_t\ \text{is the reference triangular element (see Fig~\ref{reference triangle})},\\
\hat{P}=\{P_2^{+}(\hat{T}_t);\; P_1(\hat{T}_t)\},\\
\hat{\Sigma}=\{\bm{\hat{u}}(\hat{\bm{a}}_i), 0\le i\le 3,\ \text{and}\ 
\bm{\hat{u}}(\hat{\bm{a}}_{ij}), 1\le i< j\le 3;\;
\hat{p}(\hat{\bm{\bm{b}}}_i), 1\le i \le 3\},
\end{cases}
\end{align*}
where $\{\hat{\bm{a}}_i\}_{i=1}^{3}$ are the vertices of $\hat{T}_t$, 
$\hat{\bm{a}}_0=\sum_{i=1}^3 \hat{\bm{a}}_i/3$ is the barycenter of $\hat{T}_t$, 
$\{\hat{\bm{a}}_{ij}\}_{1\le i<j\le3}$
represent the midpoints between $\hat{\bm{a}}_i$ and $\hat{\bm{a}}_j$, and $\hat{\bm{b}}_i$
are three non-collinear interior points (say Gaussian quadrature nodes of $\hat{T}_t$), and
$P_2^{+}(\hat{T}_t)=\text{span}\{ P_2(\hat{T}_t), \hat{\lambda}_1(\hat{\bm{x}})
\hat{\lambda}_2(\hat{\bm{x}})\hat{\lambda}_3(\hat{\bm{x}})\}$ with
$\hat{\lambda}_i(\hat{\bm{x}}), 1\le i\le 3$ being the barycentric coordinates of $\hat{T}_t$.

\begin{figure}[H]
 \begin{minipage}[l]{0.49\textwidth}\hspace*{1mm}
	\begin{minipage}[l]{0.225\textwidth}
    \centering    \includegraphics[width=1.4in]{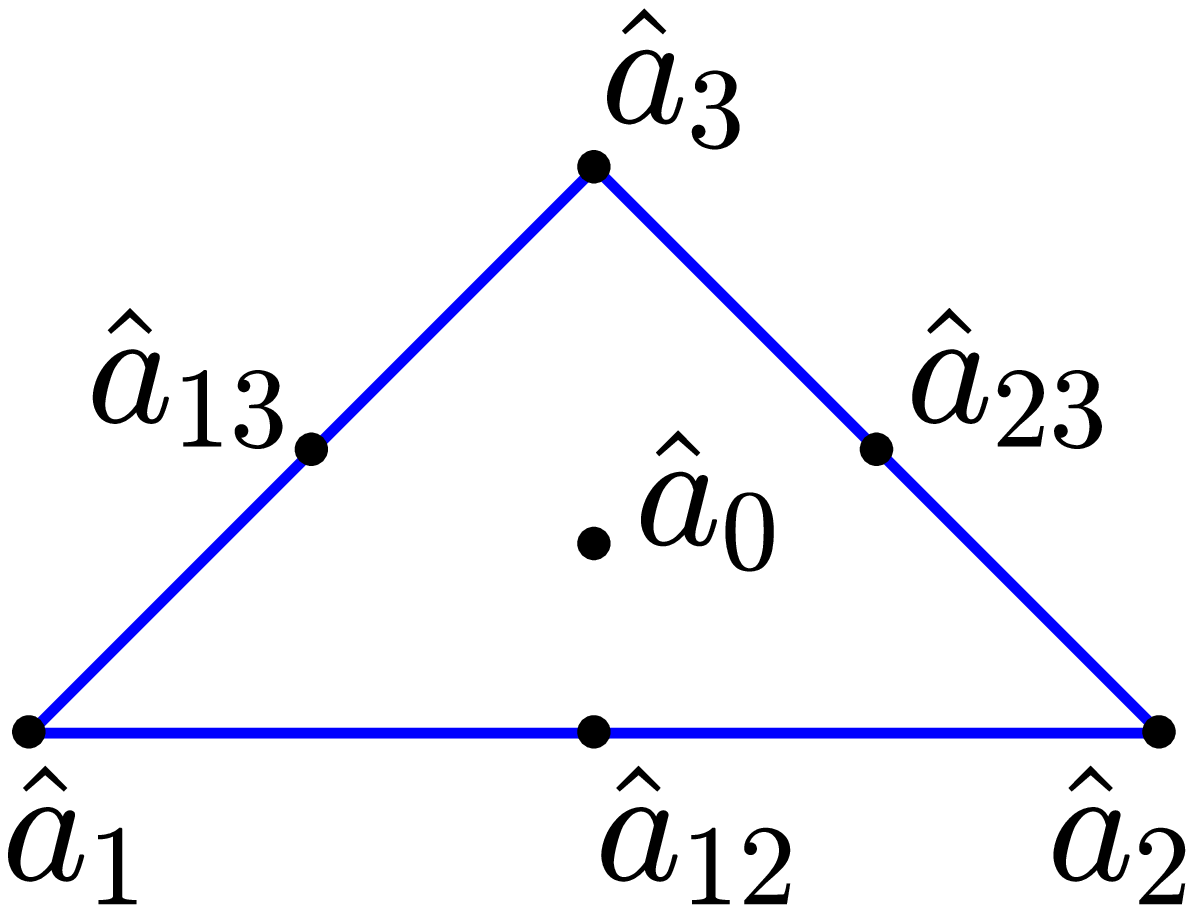}
	\vspace*{-7mm}
    \end{minipage}\hspace*{17mm}
    \begin{minipage}[r]{0.225\textwidth}
    \centering   \includegraphics[width=1.5in]{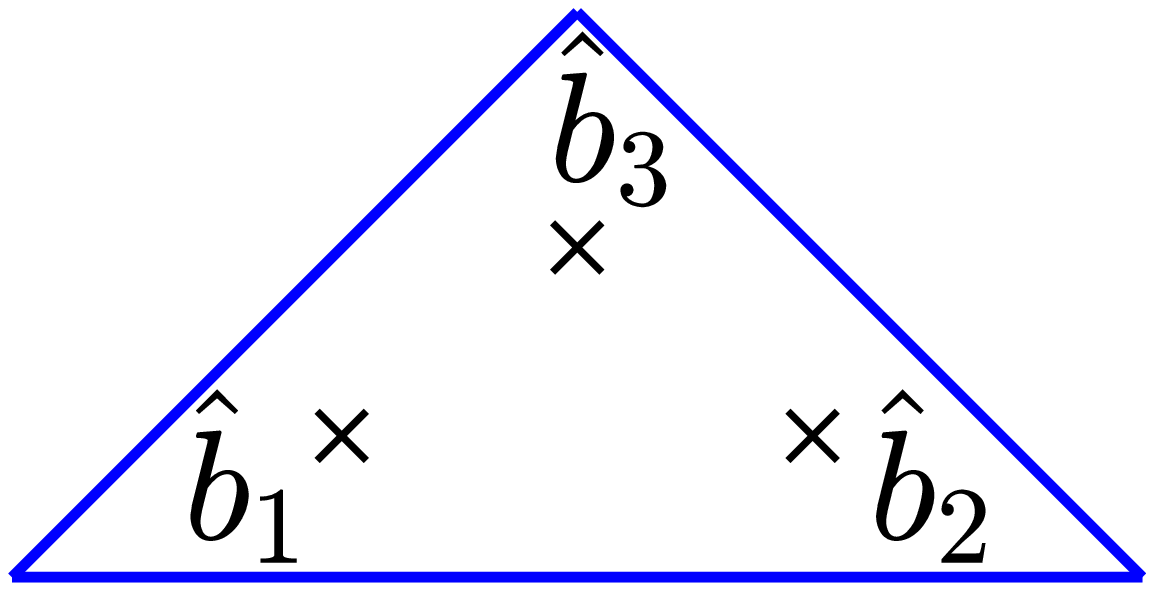}
    \vspace*{-7mm}
    \end{minipage}
    \caption{Reference element $\hat{T}_t$, $\hat{\Sigma}$.}\label{reference triangle}
 \end{minipage}
 \begin{minipage}[r]{0.49\textwidth}
    \begin{minipage}[l]{0.225\textwidth}\vspace*{-3mm}
    \centering    \includegraphics[width=1.65in]{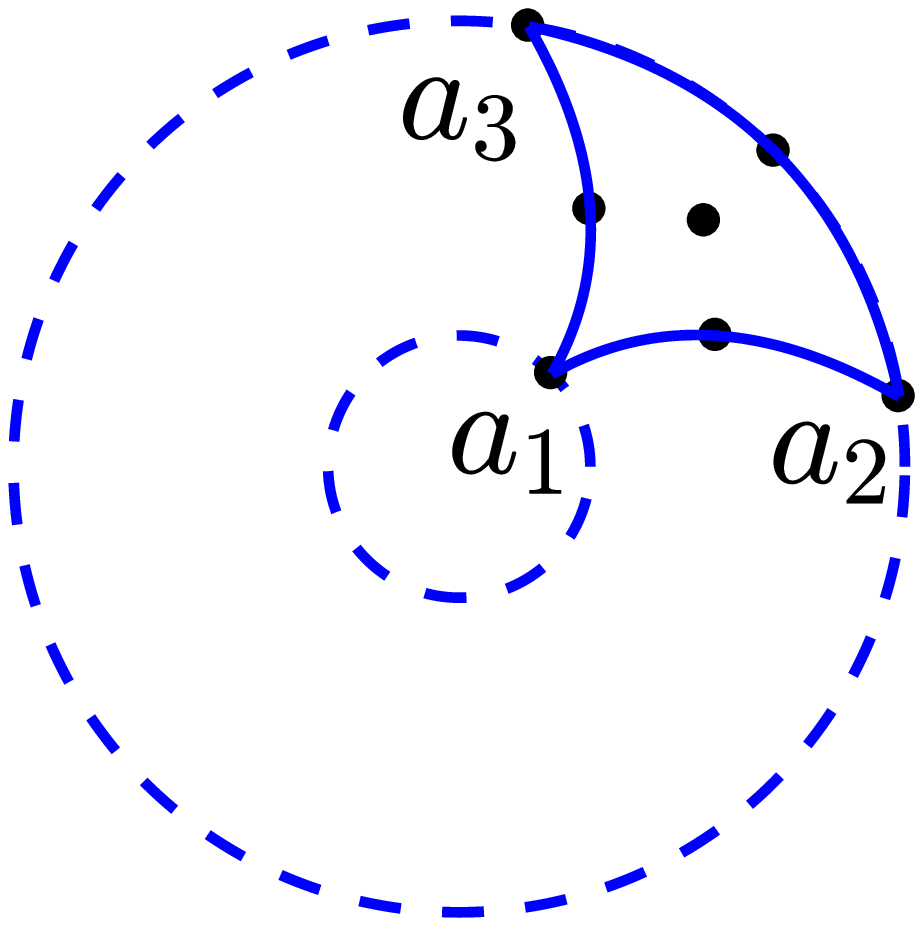}
	\vspace*{-10mm}
    \end{minipage}\hspace*{18mm}\vspace*{3mm}
    \begin{minipage}[r]{0.225\textwidth}\vspace*{-3mm}
    \centering   \includegraphics[width=1.65in]{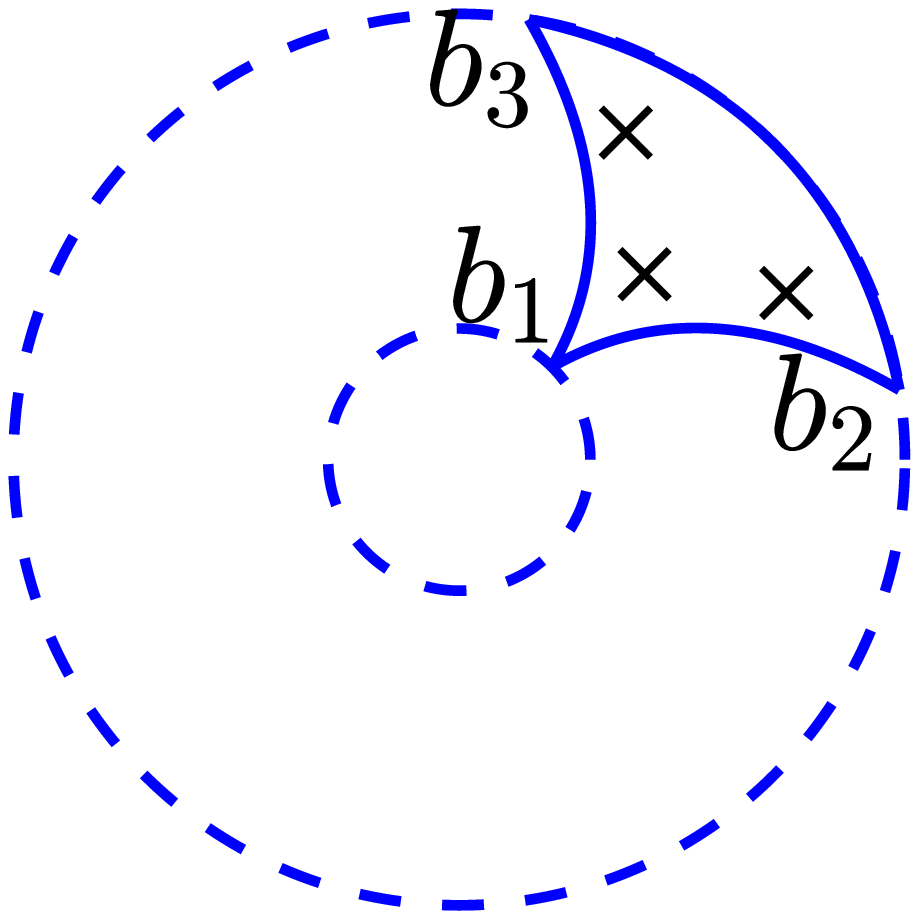}
    \vspace*{-10mm}
    \end{minipage}
    \caption{Element $T_t$, $\Sigma$.}\label{element triangle}
 \end{minipage}
\end{figure}

Given 3 non-collinear anti-clock-wisely ordered vertices $\{\bm{a}_i\}_{i=1}^3$, denote
$(r_k(\bm{x}),\theta_k(\bm{x}))$ the
local polar coordinates of $\bm{x}$ with respect to the nearest defect center $\bm{x}_k$, set
\begin{equation}
\label{midpoints1}
\begin{aligned}
\bm{a}_{ij}=\bm{x}_k+(r_{ij}\cos\theta_{ij},r_{ij}\sin\theta_{ij}),
\end{aligned}
\end{equation}
where
\begin{equation}
\label{midpoints2}
\begin{aligned}
r_{ij} = \dfrac{r(\bm{a}_i)+r(\bm{a}_j)}{2},\quad
\theta_{ij} = \dfrac{\theta(\bm{a}_i)+\theta(\bm{a}_j)}{2}.
\end{aligned}
\end{equation}
Let $F_{T_t}: \hat{T}_t \rightarrow \mathbb{R}^2$ be defined as
\begin{align}
\label{triangular element}
\begin{cases}
F_{T_t}\in (P_2(\hat{T}_t))^2,\\
\bm{x} = F_{T_t}(\hat{\bm{x}})=\sum\limits_{i=0}^{3}\bm{a}_i
\hat{\mu}_i(\hat{\bm{x}})+\sum\limits_{1\le i< j\le 3}\bm{a}_{ij}\hat{\mu}_{ij}(\hat{\bm{x}})
\end{cases}
\end{align}
where $\hat{\mu}_0 = 27\prod_{i=1}^3\hat{\lambda}_i(\hat{\bm{x}})$ and 
$$
\hat{\mu}_i = \hat{\lambda}_i(\hat{\bm{x}})(2\hat{\lambda}_i(\hat{\bm{x}})-1),\; 1\le i\le 3; \;\;
\hat{\mu}_{ij}=4\hat{\lambda}_i(\hat{\bm{x}})\hat{\lambda}_j(\hat{\bm{x}}),
\;\; 1\le i < j \le 3.
$$
If the map $F_{T_t}: \hat{T}_t \rightarrow F_{T_t}(\hat{T}_t)$ is an injection, then
$T_t=F_{T_t}(\hat{T}_t)$ defines a curved triangular element.
We define the
iso-parametric $P_2^+$-$P_1$ mixed finite element $(T_t, P, \Sigma)$ as follows:
\begin{equation*}
\begin{cases}
T_t=F_{T_t}(\hat{T}_t)\ \text{being a curved triangular element (see Fig~\ref{element triangle})},\\
P=\big\{(\bm{u}, p) : T_t\to \mathbb{R}^2\times \mathbb{R} | \bm{u}=\hat{\bm{u}}\comp F_{T_t}^{-1},
\hat{\bm{u}}\in P_2^+; \ p=\hat{p}\comp F_{T_t}^{-1}, \hat{p}\in P_1\big\},\\
\Sigma=\big\{\bm{u}(\bm{a}_i), 0\le i\le 3,\ \text{and}\ \bm{u}(\bm{a}_{ij}),1\le i< j\le 3;\
p(\bm{b}_i),1\le i\le 3\big\}.
\end{cases}
\end{equation*}

\begin{rem}
If $\bm{a}_{ij}$ is defined as $\bm{a}_{ij} = \dfrac{\bm{a}_i+\bm{a}_j}{2}, 1\le i< j\le3$,
then, $T_t$ reduces to a straight edged triangle.
\end{rem}

\subsection{The iso-parametric $Q_2$-$P_1$ element}

Let $(\hat{T}_q, \hat{P}, \hat{\Sigma})$ be the standard biquadratic-linear mixed
rectangular element:
\begin{align*}
\begin{cases}
\hat{T}_q=[-1, 1]\times [-1, 1]\ \text{is the standard reference rectangular
element (see Fig~\ref{reference qual})},\\
\hat{P}=\{Q_2(\hat{T}_q), P_1(\hat{T}_q)\},\\
\hat{\Sigma}=\{\bm{\hat{u}}(\hat{\bm{a}}_i), 0\le i\le 8; \;\;
\hat{p}(\hat{\bm{b}}_0), \partial_{\hat{x}_1} \hat{p}(\hat{\bm{b}}_0),
\partial_{\hat{x}_2} \hat{p}(\hat{\bm{b}}_0) \},
\end{cases}
\end{align*}
where $\{\hat{\bm{a}}_i\}_{i=1}^4$ are the vertices of $\hat{T}_q$, $\{\hat{\bm{a}}_{i}\}_{i=5}^8$
represent the midpoints of the edges of $\hat{T}_q$, $\hat{\bm{a}}_0= \hat{\bm{b}}_0=(0,0)$, and `-' denotes the 1st-order derivatives at $\hat{\bm{b}}_0$ and $\bm{b}_0$.

\begin{figure}[H]
 \begin{minipage}[l]{0.49\textwidth}\hspace*{1mm}
	\begin{minipage}[l]{0.225\textwidth}
    \centering    \includegraphics[width=1.5in]{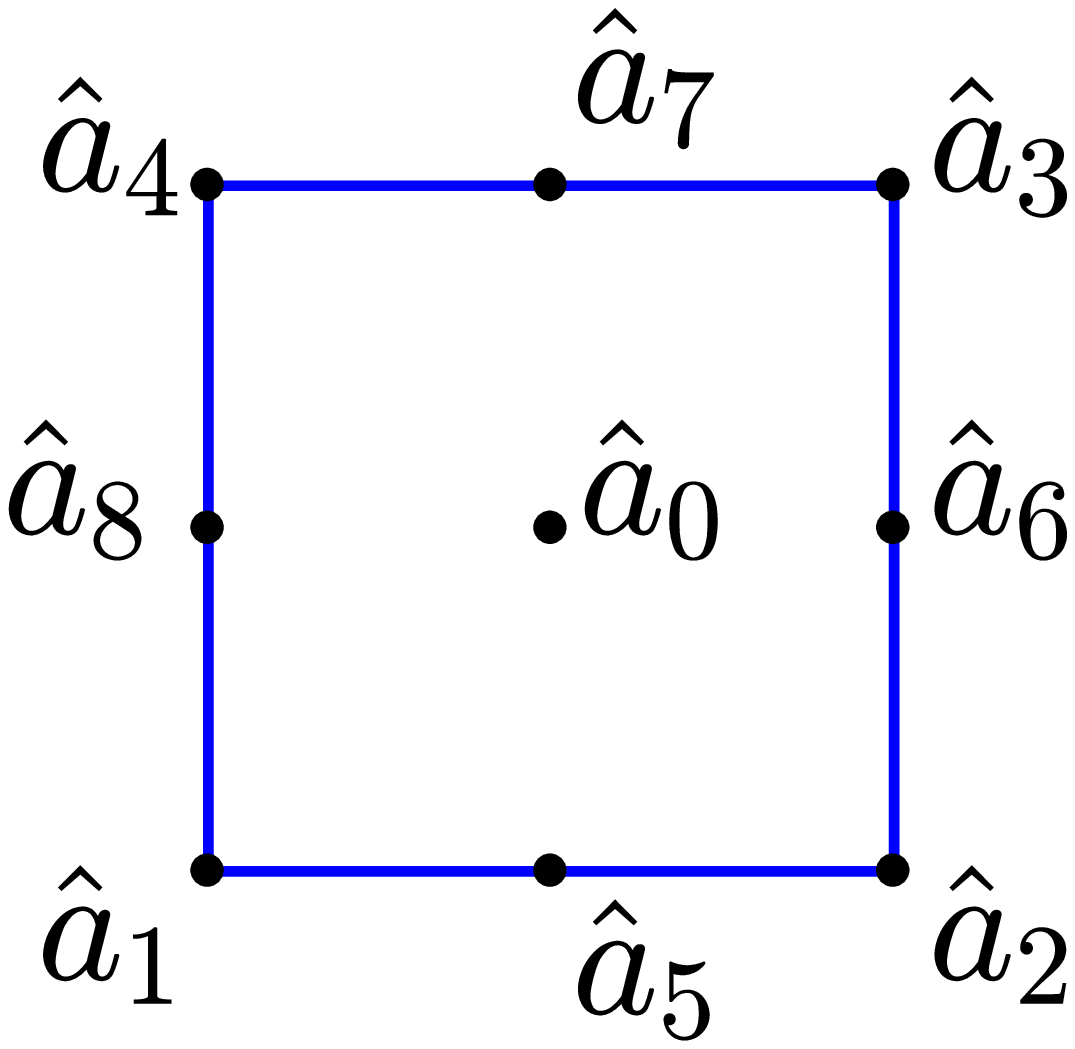}
	\vspace*{-7mm}
    \end{minipage}\hspace*{17mm}
    \begin{minipage}[r]{0.225\textwidth}
    \centering   \includegraphics[width=1.5in]{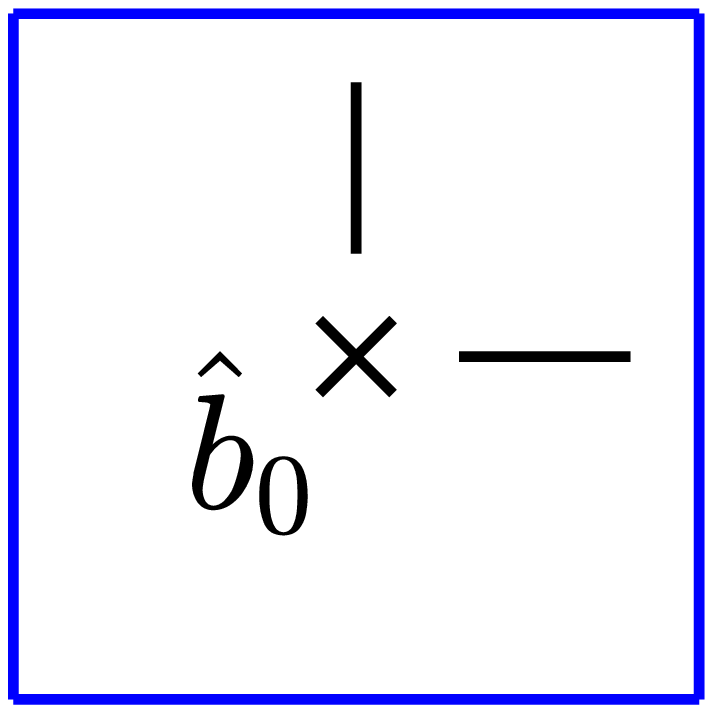}
    \vspace*{-7mm}
    \end{minipage}
    \caption{Reference element $\hat{T}_q$, $\hat{\Sigma}$.}\label{reference qual}
 \end{minipage}
 \begin{minipage}[r]{0.49\textwidth}
    \begin{minipage}[l]{0.225\textwidth}\vspace*{-3mm}
    \centering    \includegraphics[width=1.65in]{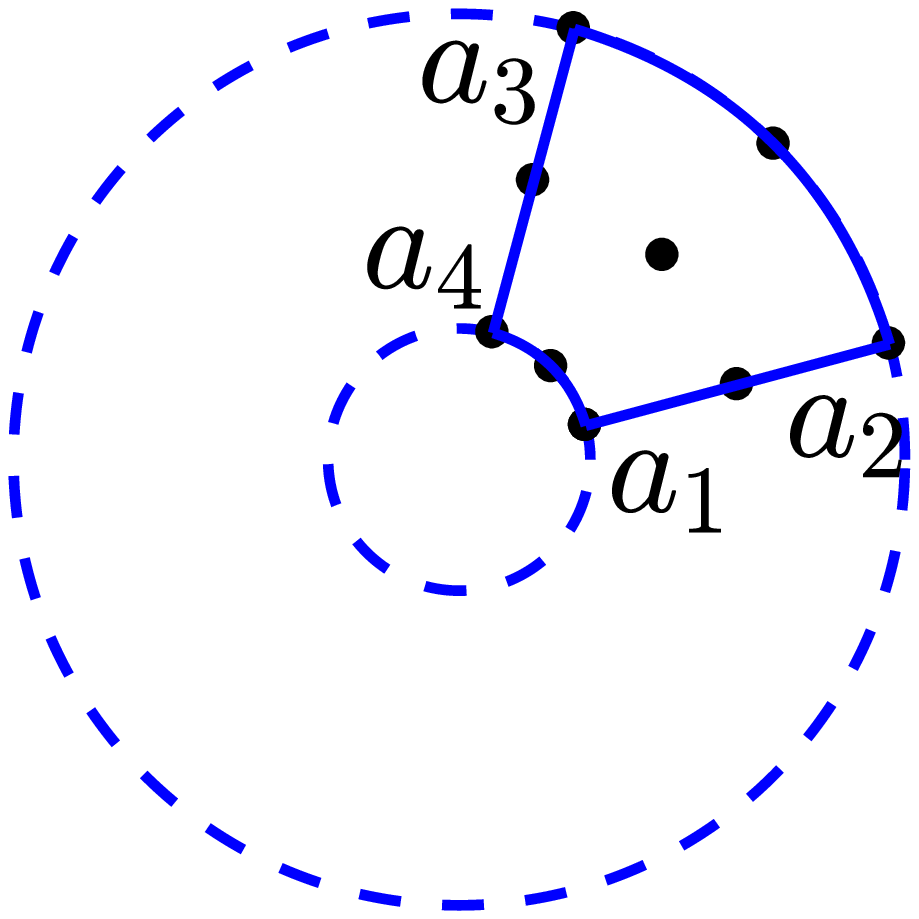}
	\vspace*{-10mm}
    \end{minipage}\hspace*{18mm}\vspace*{3mm}
    \begin{minipage}[r]{0.225\textwidth}\vspace*{-3mm}
    \centering   \includegraphics[width=1.65in]{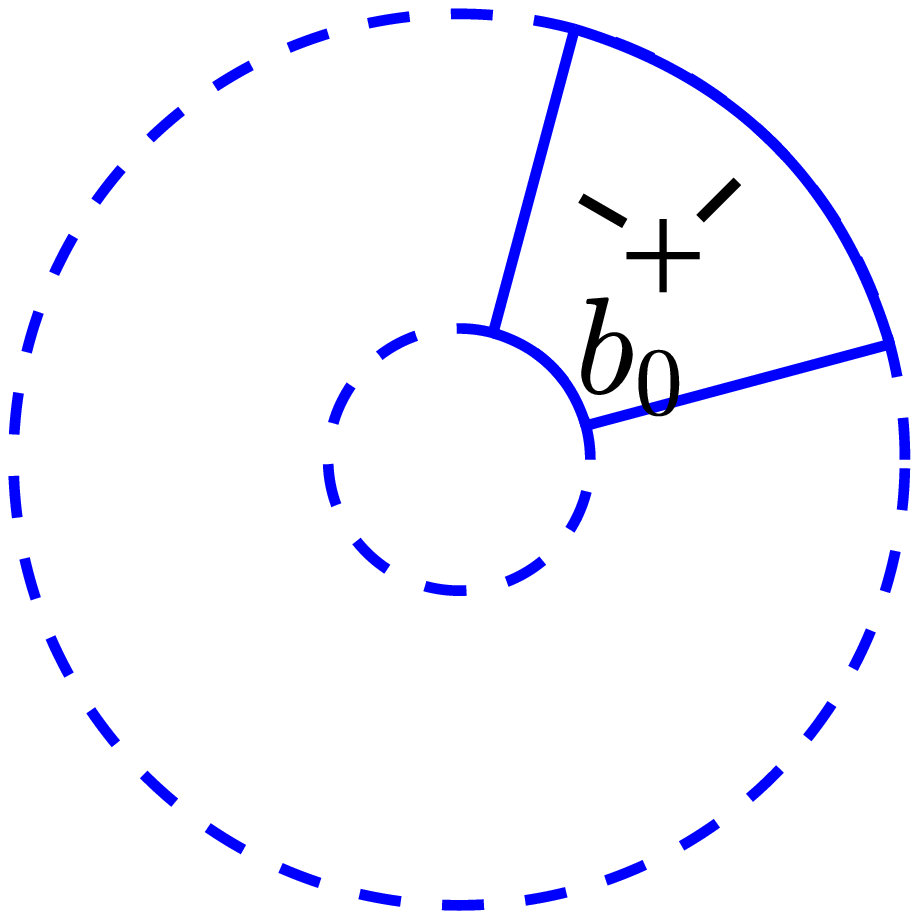}
    \vspace*{-10mm}
    \end{minipage}
    \caption{Element $T_q$, $\Sigma$.}\label{element qual}
 \end{minipage}
\end{figure}

Given 4 non-degenerate vertices $\{\bm{a}_i\}_{i=1}^{4}$, and set
$\{\bm{a}_{i}\}_{i =5}^8$ and $\bm{a}_0$ the same as for the curved triangular
element (see \eqref{midpoints1}-\eqref{midpoints2}).
Define $F_{T_q}:\hat{T}_q\to \mathbb{R}^2$ by
\begin{align}
\label{rectangular element}
\begin{cases}
F_{T_q}\in (Q_2(\hat{T}_q))^2,\\
\bm{x} = F_{T_q}(\hat{\bm{x}}) = \sum\limits_{i=0}^{8}\bm{a}_i \hat{\varphi}_i(\hat{\bm{x}})
\end{cases}
\end{align}
where $\hat{\varphi}_i$ satisfied that $\hat{\varphi}_i(\hat{\bm{a}}_j)=\delta_{ij}$,
$0\le i,j\le 8$, are the biquadratic interpolation basis functions. If $F_{T_q}$
given above is an injection, then $T_q=F_{T_q}(\hat{T}_q)$ defines a curve edged
quadrilateral element. We define the
iso-parametric $Q_2$-$P_1$ mixed finite element $(T_q, P, \Sigma)$ as follows:
\begin{equation*}
\begin{cases}
T_q=F_{T_q}(\hat{T}_q)\ \text{being a curved quadrilateral element, see Fig~\ref{element qual},}\\
P=\big\{(\bm{u}, p) : T_q\to \mathbb{R}^2\times \mathbb{R} | \bm{u}=\hat{\bm{u}}\comp F_{T_q}^{-1},
\hat{\bm{u}}\in Q_2;\ p=\hat{p}\comp F_{T_q}^{-1}, \hat{p}\in P_1\big\},\\
\Sigma=\big\{\bm{u}(\bm{a}_i), 0\le i\le 8;\
p(\bm{b}_0),\partial_{\hat{x}_1}\hat{p}(\hat{\bm{b}}_0)\comp
F_{T_q}^{-1},\partial_{\hat{x}_2}\hat{p}(\hat{\bm{b}}_0)\comp F_{T_q}^{-1}\big\},
\end{cases}
\end{equation*}

\subsection{The partition of $\Omega_\rho$}

Let $\bm{x}_k$ $(k=1,2,\cdots,K)$ be the center of the $k$-th defect with radius
$\rho_k$ on the reference configuration $\Omega_{\rho}$. The triangulation $\mathscr{T}$
of $\Omega_{\rho}$ consists of two parts: 1) $\mathscr{T}'$ on small circular ring regions
$\cup_{k=1}^K (B_{\delta_k}(\bm{x}_k)\setminus B_{\rho_k}(\bm{x}_k))$ with
$\delta_k>\rho_k$ small; 2) $\mathscr{T}''$ on $\Omega_{\rho} \setminus
\cup_{k=1}^K (B_{\delta_k}(\bm{x}_k)\setminus B_{\rho_k}(\bm{x}_k))$. Each circular ring region
$B_{\delta_k}(\bm{x}_k)\setminus B_{\rho_k}(\bm{x}_k))$ is divided into $M_k$ layers of
circular rings, and the $m$-th layer circular ring is partitioned into $N_m$ evenly spaced
curve edged rectangles with either $N_m=N_{m-1}$ or $2 N_m=N_{m-1}$, where $\{N_m\}_{m=1}^{M_k}$
and the thickness of the layers are given according to the meshing strategy, which approximately
realizes relative error equi-distribution on elastic energy by exploring the relationship between
the error and the energy density \cite{SuLiRectan} (see also \cite{SuLi2018,HuangLi2017}).
The partition $\mathscr{T}'$
on the $m$-th layer circular ring consists of $N_m$ curve edged rectangular elements if
$N_m=N_{m-1}$, otherwise the layer is partitioned into $3N_m$ curve edged triangular
elements by dividing each rectangular element into three triangular elements as shown
in  Fig~\ref{hanging nodes}.
$\mathscr{T}''$ consists of curved or straight edged triangles as shown in Fig~\ref{T''}.

\begin{figure}[H]
    \centering  \includegraphics[width=4.8in]{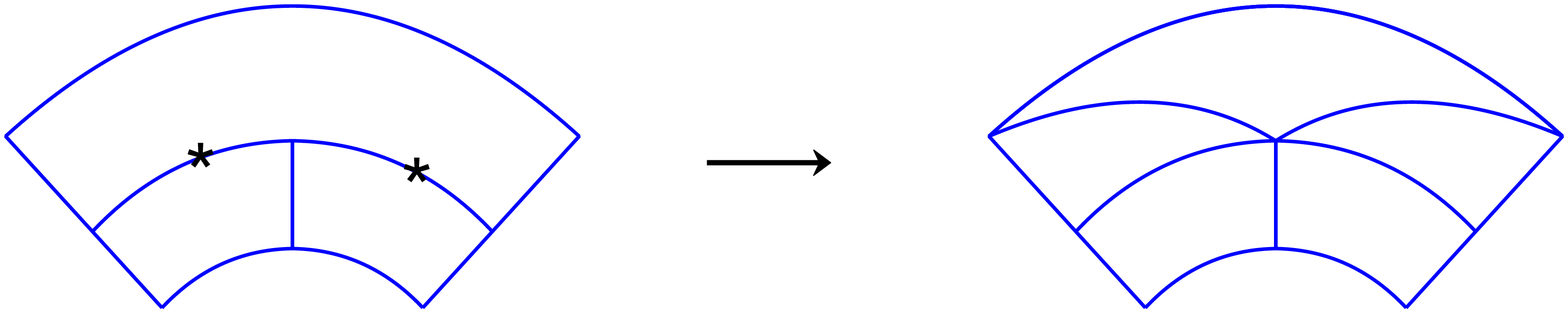}
  \caption{On layers with $2N_m=N_{m-1}$, each rectangle is divided into three triangles.}
  \label{hanging nodes}
\end{figure}

\begin{figure}[H]
\centering \subfigure[$\mathscr{T}'$ near a defet.]{
\label{T'}
    \includegraphics[width=2.2in, height=1.64in]{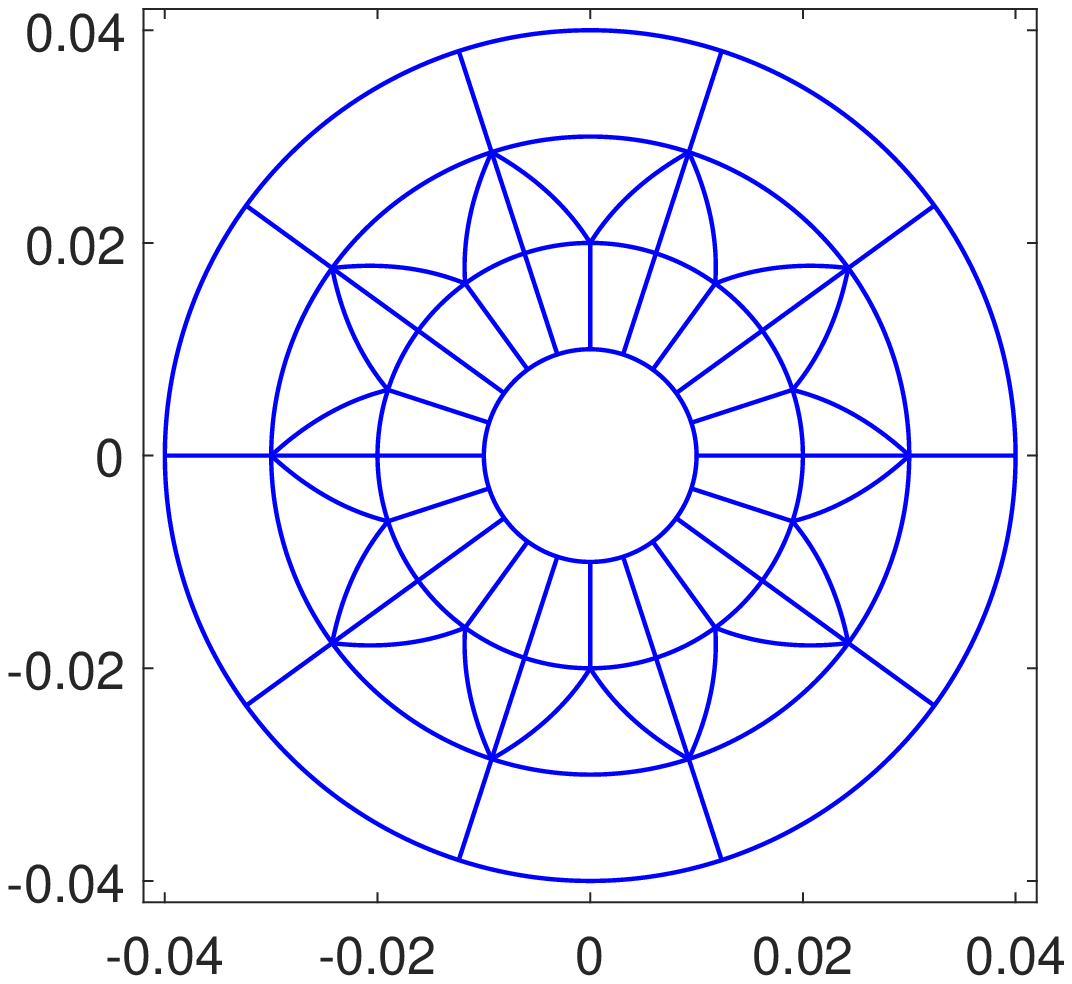}
} \hspace{-12mm}
\centering \subfigure[$\mathscr{T}''$ away from defects.]{
\label{T''}
    \includegraphics[width=2.2in, height=1.64in]{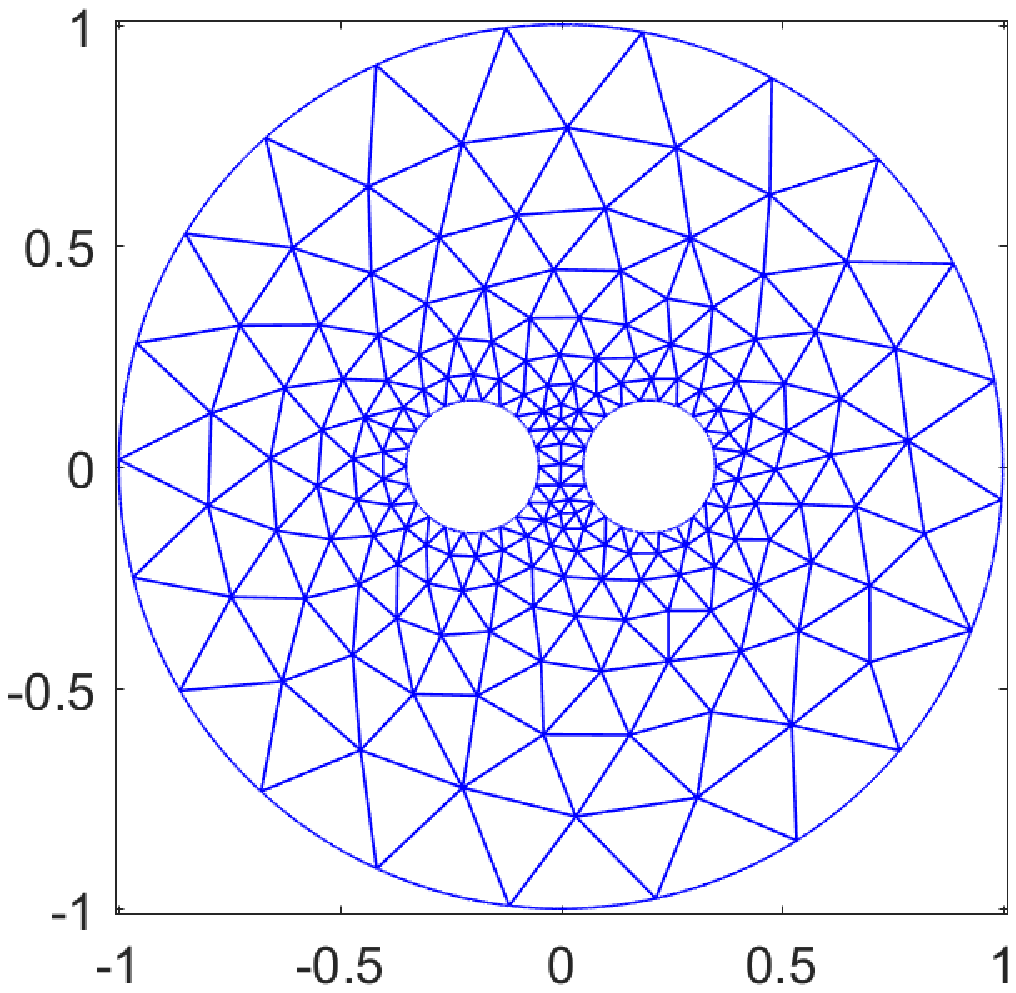}
} \hspace{-12mm}
\centering \subfigure[$\mathscr{T}=\mathscr{T}'+\mathscr{T}''$.]{
\label{T}
    \includegraphics[width=2.2in, height=1.64in]{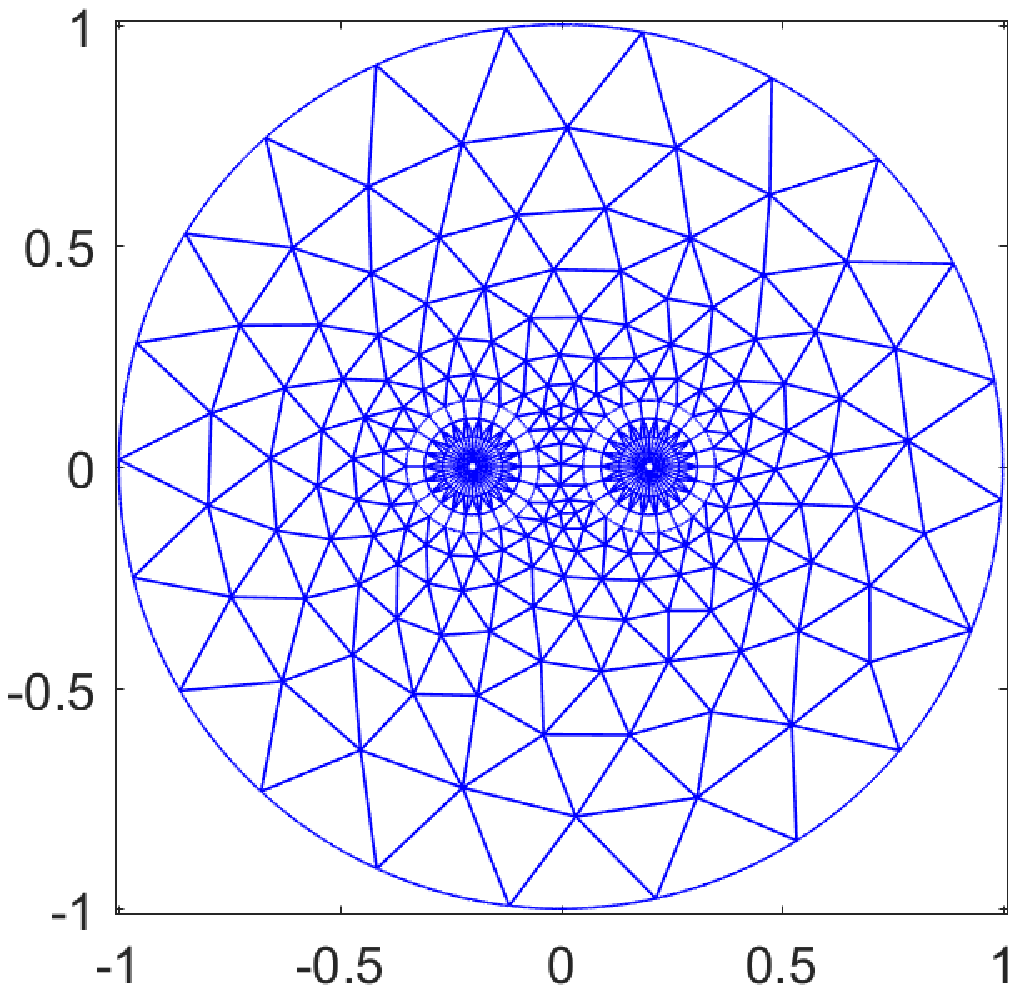}
}
  \caption{Typical partitions $\mathscr{T}'$ (in local coordinate),
  $\mathscr{T}''$ and $\mathscr{T}$.}
  \label{an example of meshing}
\end{figure}

An example of $\mathscr{T}'$ on a circular ring region near a defect is shown in Fig~\ref{T'},
where we have $M=3$, $N_1=20$, $N_2 = N_3 =10$. Notice that, the second layer is divided into
30 triangular elements so that the hanging nodes, denoted by $*$'s in Fig~\ref{hanging nodes},
can be eliminated (see also \cite{SuLi2018}), and for this reason such a layer is called
a conforming layer in contrast to the standard layers consisting of rectangular elements.
An example of $\mathscr{T}''$ on $B_{1}(\bm{0})\setminus (\cup_{k=1}^2 (B_{0.1}(\bm{x}_k)
\setminus B_{0.01}(\bm{x}_k)))$ with
$\bm{x}_1=(-0.2, 0.0)$ and $\bm{x}_2=(0.2, 0.0)$ is shown
in Fig~\ref{T''}, and the final mesh $\mathscr{T}$ produced by $\mathscr{T}'$
and $\mathscr{T}''$ is shown in Fig~\ref{T}.

In what follows below, whenever necessary, the quadrilateral and triangular elements
will be denoted as $T_q$ and $T_t$ respectively.

\subsection{The discrete problem}

We consider to numerically solve the following variational formulation of the
Euler-Lagrange equation \eqref{Euler Lagrange equation-1}-\eqref{Euler Lagrange equation-4}: find
$(\bm{u}, p) \in \mathcal{A}\cap W^{1,\infty}(\Omega_\rho)\times L^2(\Omega_\rho)$, such that
\begin{equation}
\label{weak form of Euler Lagrange equation}
\left\{
\begin{aligned}
\int_{\Omega_{\rho}}\dfrac{\partial W(\nabla\bm{u})}{\nabla\bm{u}}:\nabla\bm{v}-p
\operatorname{cof} \nabla \bm{u}:\nabla \bm{v}\,
\mathrm{d}\bm{x}& = 0,
\quad \forall \bm{v}\in H_{E}^1(\Omega_\rho), \\
\int_{\Omega_{\rho}}q(\det\nabla \bm{u}-1) \,\mathrm{d}\bm{x}&=0,
\ \ \  \forall q\in L^2(\Omega_\rho),
\end{aligned}
\right.
\end{equation}
where $H_E^1(\Omega_\rho) := \{\bm{v}\in H^1(\Omega_\rho):\ \bm{v}|_{\partial_D\Omega_\rho} = \bm{0}\}$.

The finite element trial and test function spaces for the admissible deformation are given as
\begin{align}
\label{discrete admissible set A}
\mathcal{X}_h=\mathcal{A}_h:=& \Big\{\bm{u}_h\in C(\bar{\Omega}_{\rho}):\
\bm{u}_h|_{T_q}\in F_{T_q}(Q_2), \bm{u}_h|_{T_t}\in F_{T_t}(P_2),
\bm{u}_h|_{\partial_D\Omega} = \bm{u}_0 \Big\}, \\
\label{test function}
\mathcal{X}_{h,E}:=& \Big\{\bm{u}_h\in C(\bar{\Omega}_{\rho}):\
\bm{u}_h|_{T_q}\in F_{T_q}(Q_2), \bm{u}_h|_{T_t}\in F_{T_t}(P_2),
\bm{u}_h|_{\partial_D\Omega} = \bm{0} \Big\},
\end{align}
and the finite element trial (and test) function space for the pressure is given as
\begin{align}
& \mathcal{M}_h=\mathcal{P}_h:=\Big\{p_h\in L^2(\bar{\Omega}_{\rho}):\
p_h|_{T}\in F_{T}(P_1)\Big\}.
\label{discrete admissible set P}
\end{align}
The discrete version of
\eqref{weak form of Euler Lagrange equation} is
\begin{equation}
\label{discrete Euler Lagrange equation}
\left\{
\begin{aligned}
\text{Find } (\bm{u}_h,p_h) \in \mathcal{X}_{h} \times \mathcal{M}_h, \,\text{such}
\;\text{that} \qquad\quad & \\ \int_{\Omega_{\rho}}\dfrac{\partial W(\nabla\bm{u}_h)}{\nabla\bm{u}_h}:
\nabla\bm{v}_h-p_h \operatorname{cof} \nabla \bm{u}_h:\nabla \bm{v}_h\,
\mathrm{d}\bm{x}& = 0,
\quad \forall \bm{v}_h\in \mathcal{X}_{h,E}, \\
\int_{\Omega_{\rho}}q_h(\det\nabla \bm{u}_h-1) \,\mathrm{d}\bm{x}&=0,
\ \ \  \forall q_h\in \mathcal{M}_h.
\end{aligned}
\right.
\end{equation}
A damped Newton method is applied to solve this nonlinear system, and in each Newton iteration
step we need to solve the following discrete linear problem:
\begin{equation}
\label{Discrete Newton Method}
\left\{
\begin{aligned}
\text{Find } (\bm{w}_h,p_h) \in \mathcal{X}_{h,E} \times \mathcal{M}_h,\quad & \!\!\!\text{such}
\;\text{that} \\ a(\bm{w}_h,\bm{v}_h; \underline{\bm{u}}_h,\underline{p}_h)+
b(\bm{v}_h,p_h;\bm{\underline{u}}_h) & = f(\bm{v}_h; \underline{\bm{u}}_h,\underline{p}_h),\quad
\forall \bm{v}_h\in \mathcal{X}_{h,E}, \\
b(\bm{w}_h,q_h;\bm{\underline{u}}_h)&=g(q_h;\bm{\underline{u}}_h), \quad \ \ \ \ \
\forall q_h\in \mathcal{M}_h,
\end{aligned}
\right.
\end{equation}
where $\underline{\bm{u}}_h : = \bm{u}_h^k\in\mathcal{A}_h$, $\underline{p}_h := p_h^k\in\mathcal{M}_h$ represent
the approximate solution obtained in the k-th iteration, $(\bm{w}_h, p_h)$ provides a
modifying direction of the (k+1)-th step. An incomplete linear search is conducted so that
the new guess $(\underline{\bm{u}}_h,\underline{p}_h) + \alpha(\bm{w}_h,p_h)$ with
$0<\alpha\le 1$ is orientation preserving and satisfies the regularity condition (H2)
given below. For the energy density $W(\cdot)$ given by \eqref{energy density}, we have
\begin{eqnarray}
\label{a(;)} && \!\!\!\!\!\!\!\!\!\!\!\!\!\!\!\!\!\!\!
a(\bm{w}, \bm{v};\underline{\bm{u}},\underline{p}):=
\int_{\Omega_{\rho}}\left( \mu s(s-2)|\nabla \bm{\underline{u}}|^{s-4}
(\nabla\bm{\underline{u}}: \nabla \bm{w})(\nabla\bm{\underline{u}}:\nabla \bm{v})
\right. \nonumber \\ && \qquad  \left.
\quad + \mu s|\nabla\bm{\underline{u}}|^{s-2}(\nabla \bm{w}:\nabla \bm{v})
+d''(\det\nabla\bm{\underline{u}})(\operatorname{cof}\nabla
\bm{\underline{u}}:\nabla\bm{w})(\operatorname{cof}\nabla\bm{\underline{u}}:
\nabla\bm{v}) \right. \nonumber \\ && \qquad \left.
\quad +(d'(\det\nabla\bm{\underline{u}})-\underline{p})
\operatorname{cof}\nabla\bm{w}:\nabla\bm{v}\right) \,\mathrm{d}\bm{x}, \\
\label{b(;)} && \!\!\!\!\!\!\!\!\!\!\!\!\!\!\!\!\!\!\!  b(\bm{v},
q;\underline{\bm{u}}):=\int_{\Omega_{\rho}} q\operatorname{cof}\nabla
\bm{\underline{u}}:\nabla \bm{v}\,\mathrm{d}\bm{x},\\
\label{f(v)} && \!\!\!\!\!\!\!\!\!\!\!\!\!\!\!\!\!\!\! f(\bm{v};\underline{\bm{u}},\underline{p}):=
 -\int_{\Omega_{\rho}}\left(\mu s|\nabla\underline{\bm{u}}|^{s-2}\nabla\underline{\bm{u}}:\nabla\bm{v}
+(d'(\det\nabla\bm{\underline{u}})-\underline{p})
\operatorname{cof}\nabla\bm{\underline{u}}:\nabla\bm{v}\right)\, \mathrm{d}\bm{x}, \\
\label{g(q)} && \!\!\!\!\!\!\!\!\!\!\!\!\!\!\!\!\!\!\!  g(q;\underline{\bm{u}}):=
-\int_{\Omega_{\rho}} q(\det\nabla\underline{\bm{u}}-1) \,\mathrm{d}\bm{x}.
\end{eqnarray}
In what follows below, if $(\underline{\bm{u}},\underline{p})$ is not directly involved
in the calculation, it will be omitted from the notations
in the functionals defined above. For example, $a(\bm{w},\bm{v};\underline{\bm{u}}, \underline{p})$
will simply be written as $a(\bm{w},\bm{v})$, etc..

To show the stability of the iso-parametric mixed finite element method for the discrete
linear problem \eqref{Discrete Newton Method}, we need to make some basic regularity assumptions:

\begin{description}
\item[{\bf (H1)}] The triangulation $\mathscr{T}$ is regular and $h_T\cong h$, \
$\forall T\in \mathscr{T}$.
\end{description}
\begin{description}
\item[{\bf (H2)}] $0 < \sigma \lesssim \lambda_1(\nabla\underline{\bm{u}}_h) \le
\lambda_2(\nabla\underline{\bm{u}}_h) \lesssim \sigma^{-1}$, and
$0 < c \le \det\nabla\underline{\bm{u}}_h \le C$, $\forall \bm{x} \in T\in\mathscr{T}$,
where $\sigma <1$, $0<c<1$ and $C>1$ are constants.
\item[{\bf (H3)}] For $\underline{\bm{u}}\in W^{1,\infty}(\Omega_\rho)\cap H^1(\Omega_\rho)$
satisfying (H2), $b(\bm{v},q;\underline{\bm{u}})$ satisfies inf-sup condition, i.e.
there exists a constant $\beta>0$ such that
\begin{equation}
\label{inf-sup condition}
\begin{aligned}
\sup_{\bm{v}\in H_E^1(\Omega_\rho)}\dfrac{b(\bm{v},q)}{\|\bm{v}\|_{1,2,\Omega_\rho}}
\ge \beta\|q\|_{0,2,\Omega_\rho}, \quad \forall q \in L^2(\Omega_\rho).
\end{aligned}
\end{equation}
\end{description}

Here and throughout the paper, $X \cong Y$, or equivalently $Y\lesssim X \lesssim Y$, means that
$C^{-1}Y \le X \le CY$ holds for a generic constant $C \ge 1$ independent of $\rho$, $T$ and $h$.

\begin{rem}
By the standard scaling argument, hypothesis (H1) guarantees that
\begin{equation}\label{scaling}
|\hat{\bm{v}}|_{\gamma,2,\hat{T}}\cong h_T^{\gamma-1}|\bm{v}|_{\gamma,2,T}, \;\; \gamma=0,1, \quad
\forall T \in \mathscr{T} \; \text{and} \;\;\forall \bm{v}\in H^1(T),
\end{equation}
which is a very important relation in many interpolation error estimates. In fact,
(H2) generally holds for a physically meaningful discrete cavitation deformation
$\underline{\bm{u}}_h$.\end{rem}
\begin{rem}
Hypothesis (H3) guarantees the solvability of the linear problem
\begin{equation}
\label{Continuous Newton Method}
\left\{
\begin{aligned}
\text{Find } (\bm{w},p) \in H_E^1(\Omega_\rho) \times L^2(\Omega_\rho),\quad & \!\!\!\text{such}
\;\text{that} \\ a(\bm{w},\bm{v}; \underline{\bm{u}},\underline{p})+
b(\bm{v},p;\bm{\underline{u}}) & = f(\bm{v}; \underline{\bm{u}},\underline{p}),\quad
\forall \bm{v}\in H_E^1(\Omega_\rho), \\
b(\bm{w},q;\bm{\underline{u}})&=g(q;\bm{\underline{u}}), \quad \ \ \ \
\forall q\in L^2(\Omega_\rho),
\end{aligned}
\right.
\end{equation}
which is the continuous counterpart of \eqref{Discrete Newton Method} with respect to the weak form
of Euler-Lagrange equation \eqref{weak form of Euler Lagrange equation}.
(H3) is also a basic condition for Fortin Criterion, which is essential for our stability
analysis. It is worth mentioning here that \eqref{inf-sup condition} does hold
if certain additional regularity condition, say
$\nabla\underline{\bm{u}} \in C^1(\overline{\Omega}_{\rho})$, is satisfied (see\cite{Dobrowolski}) .
\end{rem}

\section{Stability analysis of the method}

Under hypothesis (H1) for $\mathscr{T}$, (H2) for $\underline{\bm{u}}_h$ and (H3), we will prove
that the iso-parametric mixed finite element method for the problem \eqref{Discrete Newton Method}
is locking-free. More precisely, there exists a constant $\beta>0$ independent of $h$,
such that the discrete inf-sup condition
\begin{equation}
\label{LBB condition}
\sup_{\bm{v}_h\in \mathcal{X}_h}\dfrac{b(\bm{v}_h,q_h;\bm{\underline{u}}_h)}{\Vert
\bm{v}_h\Vert _{1,2,\Omega_\rho}}\ge \beta\Vert q_h\Vert _{0,2,\Omega_{\rho}},
\quad  \forall q_h\in \mathcal{M}_h
\end{equation}
holds.
For the convenience of the readers and the integrity of this paper, we present the
stability analysis below, even though it is standard, based on the famous Fortin Criterion
\cite{Fortion1991} (see Lemma~\ref{Fortin Criterion}) and a two steps construction process
\cite{Fortion1991} (see Lemma~\ref{2 steps construction}), and follows the similar lines as
in \cite{HuangLi2017}.
\begin{Lemma}
\label{Fortin Criterion}
(see Fortin Criterion \cite{Fortion1991}) Let $b(\bm{v},q;\underline{\bm{u}}_h)$ satisfy
the inf-sup condition \eqref{inf-sup condition}. Then, the LBB consition \eqref{LBB condition}
holds with a constant $\beta$ independent of $h$ if and only if there exists an operator
$\Pi_h\in \mathscr{L}(H_E^1(\Omega_\rho),\mathcal{X}_{h,E})$ and a constant $c>0$ independent
of $h$ such that
\begin{equation}
\label{the operator}
\left\{
\begin{aligned}
 &b(\bm{v}-\Pi_h\bm{v},q_h;\underline{\bm{u}}_h)=0,\qquad  \forall q_h\in \mathcal{M}_h,\
 \forall \bm{v}\in H_E^1(\Omega_\rho), \\
 &\Vert \Pi_h\bm{v}\Vert _{1,2,\Omega_{\rho}}\le c\Vert \bm{v}
 \Vert _{1,2,\Omega_{\rho}},\quad \; \forall \bm{v}\in H_E^1(\Omega_\rho).
\end{aligned}
\right.
\end{equation}
\end{Lemma}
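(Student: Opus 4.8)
The plan is to establish the stated equivalence in two directions. The backward implication (existence of $\Pi_h$ implies the LBB condition \eqref{LBB condition}) is the one actually exploited in the sequel through the two-step construction, while the forward implication is included for completeness. Throughout I would use that, under (H2), the form $b(\cdot,\cdot;\underline{\bm{u}}_h)$ is continuous, i.e. $|b(\bm{v},q)|\le M\,\|\bm{v}\|_{1,2,\Omega_\rho}\|q\|_{0,2,\Omega_\rho}$ with an $h$-independent constant $M$, since $\operatorname{cof}\nabla\underline{\bm{u}}_h$ is uniformly bounded.

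For sufficiency, fix an arbitrary $q_h\in\mathcal{M}_h\subset L^2(\Omega_\rho)$. The continuous inf-sup hypothesis \eqref{inf-sup condition} furnishes, for each $\varepsilon>0$, some $\bm{v}\in H_E^1(\Omega_\rho)$ with $b(\bm{v},q_h)\ge(\beta-\varepsilon)\|q_h\|_{0,2,\Omega_\rho}\|\bm{v}\|_{1,2,\Omega_\rho}$. I would feed this $\bm{v}$ into the Fortin operator: by the first identity in \eqref{the operator} one has $b(\Pi_h\bm{v},q_h)=b(\bm{v},q_h)$, while the second estimate in \eqref{the operator} gives $\|\Pi_h\bm{v}\|_{1,2,\Omega_\rho}\le c\,\|\bm{v}\|_{1,2,\Omega_\rho}$. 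Since $\Pi_h\bm{v}\in\mathcal{X}_{h,E}\subset\mathcal{X}_h$ is an admissible competitor in the discrete supremum,
\[
\sup_{\bm{v}_h\in\mathcal{X}_h}\frac{b(\bm{v}_h,q_h)}{\|\bm{v}_h\|_{1,2,\Omega_\rho}}
\ge\frac{b(\Pi_h\bm{v},q_h)}{\|\Pi_h\bm{v}\|_{1,2,\Omega_\rho}}
\ge\frac{b(\bm{v},q_h)}{c\,\|\bm{v}\|_{1,2,\Omega_\rho}}
\ge\frac{\beta-\varepsilon}{c}\,\|q_h\|_{0,2,\Omega_\rho}.
\]
Letting $\varepsilon\to0$ yields \eqref{LBB condition} with the $h$-independent constant $\beta/c$; the degenerate case $\Pi_h\bm{v}=0$ forces $b(\bm{v},q_h)=0$ and is harmless.

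For necessity, assuming \eqref{LBB condition}, I would construct $\Pi_h$ by solving a discrete problem. Define the operator $B_h\colon\mathcal{X}_{h,E}\to\mathcal{M}_h'$ by $\langle B_h\bm{w}_h,q_h\rangle=b(\bm{w}_h,q_h)$. The discrete inf-sup \eqref{LBB condition} states precisely that the transpose of $B_h$ is bounded below by $\beta$, whence $B_h$ is surjective and admits a right inverse of norm at most $1/\beta$. For $\bm{v}\in H_E^1(\Omega_\rho)$ the functional $q_h\mapsto b(\bm{v},q_h)$ belongs to $\mathcal{M}_h'$, so I set $\Pi_h\bm{v}$ equal to the unique minimal-norm element of $\mathcal{X}_{h,E}$, orthogonal to $\ker B_h$, satisfying $b(\Pi_h\bm{v},q_h)=b(\bm{v},q_h)$ for all $q_h\in\mathcal{M}_h$. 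The first relation in \eqref{the operator} then holds by construction, and the stability estimate follows from $\|\Pi_h\bm{v}\|_{1,2,\Omega_\rho}\le\beta^{-1}\sup_{q_h}b(\bm{v},q_h)/\|q_h\|_{0,2,\Omega_\rho}\le(M/\beta)\,\|\bm{v}\|_{1,2,\Omega_\rho}$, so $c=M/\beta$ works.

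The main obstacle is the necessity direction, specifically guaranteeing that $\Pi_h$ is simultaneously linear and bounded uniformly in $h$. Linearity is secured by choosing the minimal-norm solution, i.e. the component orthogonal to $\ker B_h$, which depends linearly on the data through the pseudo-inverse of $B_h$. Uniform boundedness rests on two ingredients that must not degenerate as $h\to0$: the $h$-independence of $\beta$ supplied by \eqref{LBB condition}, and the continuity constant $M$ of $b(\cdot,\cdot;\underline{\bm{u}}_h)$, which is exactly where hypothesis (H2) is needed. By contrast, the sufficiency direction is essentially bookkeeping with the supremum once $\Pi_h$ is at hand, and it is this direction that the subsequent explicit construction of $\Pi_h$ is designed to verify.
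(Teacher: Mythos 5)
Your proof is correct, but note that the paper itself never proves this lemma: it is quoted as a known result with a citation to Brezzi--Fortin \cite{Fortion1991}, and \S~3 uses it only as a black box (together with Lemma~\ref{2 steps construction}) to reduce the LBB condition \eqref{LBB condition} to the explicit construction of $\Pi_1$ and $\Pi_2$. What you have written is essentially the classical argument from the cited reference: the sufficiency direction by inserting $\Pi_h\bm{v}$ as a competitor in the discrete supremum, and the necessity direction by the closed-range/pseudo-inverse argument, taking the minimal-norm preimage orthogonal to $\ker B_h$ to secure linearity and the uniform bound $c=M/\beta$. Both directions are sound: you correctly identify that only sufficiency is exploited in the sequel, that the $h$-independent continuity constant $M$ of $b(\cdot,\cdot;\underline{\bm{u}}_h)$ is exactly where (H2) enters (in two dimensions $|\operatorname{cof}\nabla\underline{\bm{u}}_h|=|\nabla\underline{\bm{u}}_h|\lesssim\sigma^{-1}$), and your dismissal of the degenerate case is justified, since for $q_h\neq 0$ the chosen $\bm{v}$ has $b(\bm{v},q_h)>0$, which forces $\Pi_h\bm{v}\neq\bm{0}$. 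One cosmetic caveat: the supremum in \eqref{LBB condition} is written over $\mathcal{X}_h$, whose members satisfy the nonhomogeneous boundary condition $\bm{u}_h|_{\partial_D\Omega}=\bm{u}_0$, whereas $\Pi_h\bm{v}\in\mathcal{X}_{h,E}$; the inclusion $\mathcal{X}_{h,E}\subset\mathcal{X}_h$ you invoke holds only when $\bm{u}_0=\bm{0}$. This is an inconsistency in the paper's own notation (the test space in \eqref{LBB condition} should be $\mathcal{X}_{h,E}$, as in Lemma~\ref{Fortin Criterion}), not a gap in your argument.
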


\begin{Lemma}
\label{2 steps construction}
Let $\Pi_1\in \mathscr{L}(H_E^1(\Omega_\rho),\mathcal{X}_{h,E})$ and $\Pi_2\in\mathscr{L}(H_E^1(\Omega_\rho),\mathcal{X}_{h,E})$
be such that
\begin{equation}
\label{operator construction}
\left\{
\begin{aligned}
&\Vert \Pi_1\bm{v}\Vert _{1,2,\Omega_\rho}\le c_1\Vert \bm{v}\Vert _{1,2,\Omega_\rho},
\quad \forall \bm{v}\in H_E^1(\Omega_{\rho}),\\
&\Vert \Pi_2(I-\Pi_1)\bm{v}\Vert _{1,2,\Omega_\rho}\le c_2\Vert
\bm{v}\Vert _{1,2,\Omega_\rho},\quad \forall \bm{v}\in H_E^1(\Omega_\rho),\\
&b(\bm{v}-\Pi_2\bm{v},q_h;\bm{\underline{u}}_h)=0,\quad  \forall \bm{v}\in H_E^1(\Omega_\rho),\
\forall q_h\in \mathcal{M}_h.
\end{aligned}
\right.
\end{equation}
Set $\Pi_h\bm{v}=\Pi_1\bm{v}+\Pi_2(\bm{v}-\Pi_1\bm{v})$, then $\Pi_h\in
\mathscr{L}(H_E^1(\Omega_\rho),\mathcal{X}_{h,E})$ satisfies \eqref{the operator}.
\end{Lemma}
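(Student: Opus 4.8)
The plan is to verify directly that the composite operator $\Pi_h$ inherits both requirements in \eqref{the operator} from the three hypotheses in \eqref{operator construction}, since the construction is purely algebraic once linearity and the defining identity are in hand. First I would record that $\Pi_h$ is well defined and linear: because $\Pi_1,\Pi_2$ are linear and $\Pi_h\bm{v}=\Pi_1\bm{v}+\Pi_2(I-\Pi_1)\bm{v}=(\Pi_1+\Pi_2-\Pi_2\Pi_1)\bm{v}$, it is a sum and composition of bounded linear maps with range in $\mathcal{X}_{h,E}$, so $\Pi_h\in\mathscr{L}(H_E^1(\Omega_\rho),\mathcal{X}_{h,E})$. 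Before invoking the third hypothesis in \eqref{operator construction} I would check the small but essential point that $(I-\Pi_1)\bm{v}\in H_E^1(\Omega_\rho)$ for every $\bm{v}\in H_E^1(\Omega_\rho)$; this holds because $\Pi_1\bm{v}\in\mathcal{X}_{h,E}\subset H_E^1(\Omega_\rho)$, so the difference again vanishes on $\partial_D\Omega_\rho$ and the hypotheses may legitimately be applied with $(I-\Pi_1)\bm{v}$ as argument.

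For the stability bound I would simply combine the triangle inequality with the first two estimates in \eqref{operator construction}:
\begin{equation*}
\Vert\Pi_h\bm{v}\Vert_{1,2,\Omega_\rho}
\le\Vert\Pi_1\bm{v}\Vert_{1,2,\Omega_\rho}
+\Vert\Pi_2(I-\Pi_1)\bm{v}\Vert_{1,2,\Omega_\rho}
\le (c_1+c_2)\Vert\bm{v}\Vert_{1,2,\Omega_\rho},
\end{equation*}
so the second line of \eqref{the operator} holds with $c=c_1+c_2$, a constant independent of $h$ precisely because $c_1$ and $c_2$ are. For the consistency (orthogonality) condition I would establish the telescoping identity
\begin{equation*}
\bm{v}-\Pi_h\bm{v}=\bm{v}-\Pi_1\bm{v}-\Pi_2(\bm{v}-\Pi_1\bm{v})
=(I-\Pi_2)(I-\Pi_1)\bm{v}=(I-\Pi_2)\bm{w},
\end{equation*}
where $\bm{w}:=(I-\Pi_1)\bm{v}\in H_E^1(\Omega_\rho)$ by the observation above. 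Applying the third hypothesis of \eqref{operator construction} with $\bm{w}$ in place of $\bm{v}$ then yields, for every $q_h\in\mathcal{M}_h$,
\begin{equation*}
b(\bm{v}-\Pi_h\bm{v},q_h;\underline{\bm{u}}_h)=b(\bm{w}-\Pi_2\bm{w},q_h;\underline{\bm{u}}_h)=0,
\end{equation*}
which is exactly the first line of \eqref{the operator}. Together with the stability estimate and the linearity observation, this completes the verification.

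Since the argument is entirely algebraic, there is no genuine analytic obstacle in this lemma itself; the only point deserving explicit mention, and the one I would flag, is the membership $(I-\Pi_1)\bm{v}\in H_E^1(\Omega_\rho)$ that licenses the use of the third hypothesis, without which the telescoping step would be formally correct but unjustified. The real difficulty is deferred to the subsequent explicit construction of the two operators: $\Pi_1$, a Scott--Zhang type quasi-interpolant realizing the first bound, and $\Pi_2$, an element-local correction built from the interior/bubble degrees of freedom of the $P_2^+$ and $Q_2$ elements (the cubic bubble $\hat\mu_0$ and the biquadratic interior node) that adjusts the flux $b(\bm{v},q_h;\underline{\bm{u}}_h)=\int_{\Omega_\rho} q_h\,\operatorname{cof}\nabla\underline{\bm{u}}_h:\nabla\bm{v}$ against the $P_1$ pressure on each element. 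Proving that $\Pi_2$ is bounded uniformly in $h$ in the strongly anisotropic, graded geometry near the cavities, using the scaling relation \eqref{scaling} and hypothesis (H2), is where the essential work of the stability analysis will actually lie.
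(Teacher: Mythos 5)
Your proof is correct and is precisely the standard argument: the paper itself does not prove this lemma but quotes it from the cited reference (Brezzi--Fortin), and your verification --- the triangle inequality combined with the first two hypotheses for the uniform bound, and the telescoping identity $\bm{v}-\Pi_h\bm{v}=(I-\Pi_2)(I-\Pi_1)\bm{v}$ together with the third hypothesis applied to $\bm{w}=(I-\Pi_1)\bm{v}\in H_E^1(\Omega_\rho)$ for the orthogonality condition --- is exactly the proof found there, including the point worth flagging that $\Pi_1\bm{v}\in\mathcal{X}_{h,E}$ guarantees $\bm{w}$ is a legitimate argument for the third hypothesis. Your closing remark that the real work is deferred to the explicit construction of $\Pi_1$ and $\Pi_2$ also matches the paper's structure, where that burden is carried by the two construction steps, Lemma 3, and Theorem 1 (the paper's $\Pi_1$ is a Cl\'ement interpolant corrected by edge bubbles rather than a bare quasi-interpolant, precisely so that the elementwise mean-flux condition needed for $\Pi_2$'s domain holds).
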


\emph{\textbf{Step 1}}. The construction of $\Pi_1\in \mathscr{L}(H_E^1(\Omega_\rho),\mathcal{X}_h)$.

Let $(\bar{\mathcal{X}}_h,\bar{\mathcal{M}}_h)$ be given by
\begin{equation}
\label{sub space}
\begin{cases}
\bar{\mathcal{X}}_h=\{ \bm{v}_h \in \mathcal{X}_h : \ \bm{v}_h|_{T_q} \in
F_{T_q}(Q_1)\oplus\text{span}\{\bm{q}_1,\bm{q}_2,\bm{q}_3,\bm{q}_4\}, \
\bm{v}_h|_{T_t} \in F_{T_t}(P_2) \}, \\
\bar{\mathcal{M}}_h=\{ q_h \in \mathcal{M}_h : \ q_h|_{T}\in F_T(P_0)\},
\end{cases}
\end{equation}
where $\{\bm{q}_{i}\}_{i=1}^{4}$
are the edge bubble functions with respect to the edges $\{e_{i}\}_{i=1}^{4}$ of $T_q$.
For example, denote $\bm{x}=(x_1,x_2)$ and
$(\hat{x}_1,\hat{x}_2)=\hat{\bm{x}}=F_{T_q}^{-1}(\bm{x})$, then
$$
\bm{q}_1(\bm{x})=(\hat{q}_1\comp F_{T_q}^{-1}(\hat{\bm{x}}))\ \bm{n}_1(F_{T_q}(-1,\hat{x}_2)),
$$
where $\hat{q}_1=(1-\hat{x}_2^2)(1-\hat{x}_1)$ and $\bm{n}_1$ is the unit out normal of the
edge $e_1$. Obviously $\bm{q}_1(\bm{x}) =0$, $\forall \bm{x} \in \partial T_q \setminus e_1$.
The formulae for $\{\bm{q}_{i}\}_{i=2}^{4}$ are similarly defined. In particular,
we notice that $\{\bm{q}_{i}\}_{i=1}^{4}$ have null tangential components on the edges
of $T_q$. On the other hand, let $\{\bm{p}_i(\bm{x})\}_{i=1}^3$ be the edge bubble functions
with respect to the edges of a triangular element $T_t$, defined in a similar way as
$\{\bm{q}_i\}_{i=1}^4$, then we have
$F_{T_t}(P_2) = F_{T_t}(P_1)\oplus\text{span}\{\bm{p}_1,\bm{p}_2,\bm{p}_3\}$.

Firstly, let $\bar{\Pi}_1: H_E^1(\Omega_\rho) \rightarrow
\mathcal{X}_h\cap H_E^1(\Omega_\rho)$ be the Cl\'{e}ment interpolation operator, then,
under the hypothesis (H1) and by the standard scaling argument (see for example Corollary 2.1
on page 106 in \cite{Fortion1991}), one has
\begin{equation}\label{Clement}
\sum_{T \in \mathscr{T}}h_T^{2\gamma-2}|\bm{v}-\bar{\Pi}_1\bm{v}|_{\gamma,2,T}^2\lesssim
|\bm{v}|_{1,2,\Omega_\rho}^2,\quad \gamma = 0,1.
\end{equation}

Next, let $\bar{\Pi}_2: H_E^1(\Omega_\rho) \to \mathcal{X}_h\cap H_E^1(\Omega_\rho)$
be uniquely determined by
\begin{equation}
\label{operator2}
\left\{
\begin{aligned}
&\bar{\Pi}_2\bm{v}|_{T_t}\in \text{span}\{\bm{p}_1,\bm{p}_2,\bm{p}_3\},\\
&\bar{\Pi}_2\bm{v}|_{T_q}\in \text{span}\{\bm{q}_1,\bm{q}_2,\bm{q}_3,\bm{q}_4\},\\
&\int_{e_i}\operatorname{cof}\nabla \underline{\bm{u}}_h^{\rm T}
(\bar{\Pi}_2\bm{v}-\bm{v})\cdot \bm{n}_i\,\mathrm{d}s =0,
\ \forall  e_i\in \partial {T},\ \forall T\in\mathscr{T},
\end{aligned}
\right.
\end{equation}
then, as $\operatorname{div}(\operatorname{cof}\nabla\underline{\bm{u}}_h|_T)=0$, one has
\begin{equation}
\label{third condition}
\begin{aligned}
\int_T \operatorname{cof}\nabla\underline{\bm{u}}_h:\nabla(\bar{\Pi}_2\bm{v}-\bm{v})\,\mathrm{d}\bm{x}
=\int_{\partial T}(\operatorname{cof}\nabla\underline{\bm{u}}_h^{\rm T}
(\bar{\Pi}_2\bm{v}-\bm{v}))\cdot\bm{n}\,\mathrm{d}s=0,\ \forall T\in\mathscr{T}.
\end{aligned}
\end{equation}

Now, define $\Pi_1 \in \mathscr{L}(H_E^1(\Omega_\rho),\mathcal{X}_h)$ as
$\Pi_1 \bm{v} \triangleq \bar{\Pi}_h\bm{v}=\bar{\Pi}_1\bm{v}+\bar{\Pi}_2
(\bm{v}-\bar{\Pi}_1\bm{v})$, $\forall \bm{v} \in H_E^1(\Omega_\rho)$.

\vskip 2mm
\emph{\textbf{Step 2.}} The construction of $\Pi_2 \in \mathscr{L}(H_E^1(\Omega_\rho),\mathcal{X}_h)$.

Introduce a bubble function space on $\mathscr{T}$ by defining
\begin{equation}\label{bubble}
\mathcal{B}_h=\{ \bm{b}\in C(\bar{\Omega}_{\rho};\mathbb{R}^2) :
\bm{b}|_T=\hat{\bm{b}} \comp F_T^{-1}\},
\end{equation}
where $\hat{\bm{b}}(\hat{\bm{x}})=(b_1(1-\hat{x}_1^2)(1-\hat{x}_2^2),
b_2(1-\hat{x}_1^2)(1-\hat{x}_2^2))$ if $T \in \mathscr{T}$ is a quadrilateral
element, and $\hat{\bm{b}}(\hat{\bm{x}})=(b_1\hat{\lambda}_1(\hat{\bm{x}})
\hat{\lambda}_2(\hat{\bm{x}})\hat{\lambda}_3(\hat{\bm{x}}), b_2\hat{\lambda}_1(\hat{\bm{x}})
\hat{\lambda}_2(\hat{\bm{x}})\hat{\lambda}_3(\hat{\bm{x}}))$ if $T \in \mathscr{T}$ is a
triangular element, and $b_1$, $b_2\in \mathbb{R}$. Define
$\Pi_2 : \{\bm{v}\in H_E^1(\Omega_\rho):\ \int_T \operatorname{cof}\nabla\underline{\bm{u}}_h:
\nabla\bm{v}\,\mathrm{d}\bm{x}=0,\forall T\in\mathscr{T}\}\rightarrow \mathcal{B}_h$ as
the unique solution of the linear system
\begin{equation}
\label{operator2 construction}
\int_{T} \operatorname{cof}\nabla\underline{\bm{u}}_h:\nabla(\Pi_2\bm{v}-\bm{v})\ q_h
\,\mathrm{d}\bm{x}=0,\ \ \forall q_h\in F_T(P_1), \ \
\forall T \in \mathscr{T}.
\end{equation}
Notice that \eqref{operator2 construction} naturally holds for $\forall q_h\in F_T(P_0)$.
This is because $\Pi_2\bm{v}|_{\partial T} =0$ and
$\operatorname{div}(\operatorname{cof}\nabla\bm{\underline{u}}_h|_T)=0$,
hence, by the divergence theorem, one has
\begin{equation}\label{Pi v null}
\int_{T}\operatorname{cof}\nabla\underline{\bm{u}}_h:\nabla\Pi_2\bm{v} \,
\mathrm{d}\bm{x}=\int_{\partial T}(\operatorname{cof}\nabla
\underline{\bm{u}}_h)\bm{n}\cdot \Pi_2\bm{v}
\,\mathrm{d}s-\int_{T}\operatorname{div}(\operatorname{cof}\nabla\bm{\underline{u}}_h)
\cdot \Pi_2\bm{v}\, \mathrm{d}\bm{x} = 0.
\end{equation}

\begin{Lemma}\label{Pi_1 lemma}
Let $\mathscr{T}$ and $\underline{\bm{u}}_h$ satisfy hypothesis (H1) and (H2) respectively.
Then $\Pi_1$ defined in step~1 satisfies $\Pi_1\in\mathscr{L}(H_E^1(\Omega_\rho),\bar{\mathcal{X}}_h)$
and
\begin{equation}
\left\{
\begin{aligned}
\label{operator1 property}
&\Vert \Pi_1\bm{v}\Vert _{1,2,\Omega_\rho}\lesssim\dfrac{1}{\sigma^2}\Vert \bm{v}
\Vert _{1,2,\Omega_\rho},\quad \forall \bm{v}\in H_E^1(\Omega_\rho),\\
&\int_{T} \operatorname{cof}\nabla \underline{\bm{u}}_h:
\nabla(\Pi_1\bm{v}-\bm{v})\,\mathrm{d}\bm{x}=0,\quad \forall \bm{v}\in H_E^1(\Omega_\rho),
\ \forall T\in \mathscr{T}.
\end{aligned}
\right.
\end{equation}
\end{Lemma}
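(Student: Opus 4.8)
The plan is to treat the two assertions of the lemma separately, disposing of the orthogonality identity first, since it is essentially built into the construction, and then concentrating on the stability bound, which carries the $\sigma^{-2}$ factor. Set $\bm{w} := \bm{v} - \bar{\Pi}_1\bm{v}$, so that by $\Pi_1\bm{v} = \bar{\Pi}_1\bm{v} + \bar{\Pi}_2\bm{w}$ one has $\Pi_1\bm{v} - \bm{v} = \bar{\Pi}_2\bm{w} - \bm{w}$. Since $\bar{\Pi}_2$ was built through the edge conditions \eqref{operator2}, relation \eqref{third condition} (which rests on the Piola identity $\operatorname{div}(\operatorname{cof}\nabla\underline{\bm{u}}_h|_T)=0$ plus the divergence theorem) applies verbatim with $\bm{v}$ replaced by $\bm{w}$, giving $\int_T \operatorname{cof}\nabla\underline{\bm{u}}_h:\nabla(\bar{\Pi}_2\bm{w}-\bm{w})\,\mathrm{d}\bm{x}=0$ on each $T\in\mathscr{T}$; this is exactly the second line of \eqref{operator1 property}, with no estimate required. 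Membership $\Pi_1\in\mathscr{L}(H_E^1(\Omega_\rho),\bar{\mathcal{X}}_h)$ follows because $\bar{\Pi}_1$ lands in $F_{T_q}(Q_1)$ resp.\ $F_{T_t}(P_2)$, while $\bar{\Pi}_2$ only adds the edge bubbles $\bm{q}_i$ resp.\ $\bm{p}_i$, which are precisely the extra directions in \eqref{sub space}.

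For the stability bound I would start from $\|\Pi_1\bm{v}\|_{1,2}\le\|\bar{\Pi}_1\bm{v}\|_{1,2}+\|\bar{\Pi}_2\bm{w}\|_{1,2}$. The first term obeys the Clément stability $\|\bar{\Pi}_1\bm{v}\|_{1,2,\Omega_\rho}\lesssim\|\bm{v}\|_{1,2,\Omega_\rho}$, read off from \eqref{Clement}. The whole difficulty lies in $\bar{\Pi}_2\bm{w}$, which I would reduce to an elementwise estimate of the bubble coefficients. On a quadrilateral write $\bar{\Pi}_2\bm{w}|_{T_q}=\sum_i c_i\bm{q}_i$; as the $\bm{q}_i$ are supported on disjoint edges and purely normal there, the third line of \eqref{operator2} decouples into one scalar equation per edge, namely $c_i\int_{e_i}\hat{q}_i\,(\operatorname{cof}\nabla\underline{\bm{u}}_h\,\bm{n}_i)\cdot\bm{n}_i\,\mathrm{d}s=\int_{e_i}(\operatorname{cof}\nabla\underline{\bm{u}}_h^{\mathrm{T}}\bm{w})\cdot\bm{n}_i\,\mathrm{d}s$ (and analogously for the $\bm{p}_i$ on $T_t$). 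A standard scaling argument via \eqref{scaling} gives $\|c_i\bm{q}_i\|_{1,2,T}\cong|c_i|$, so everything comes down to bounding each $|c_i|$.

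The key algebraic tool is the two-dimensional identity $(\operatorname{cof}F\,\bm{n})\cdot\bm{n}=(F\bm{t})\cdot\bm{t}$, valid for the unit normal $\bm{n}$ and tangent $\bm{t}$ of an edge, which turns the denominator into the tangential stretch $\bm{t}_i^{\mathrm{T}}\nabla\underline{\bm{u}}_h\bm{t}_i$ along $e_i$. The numerator is bounded by $|\operatorname{cof}\nabla\underline{\bm{u}}_h|_{L^\infty}\lesssim\sigma^{-1}$, since by (H2) the largest singular value of the cofactor is $\det\nabla\underline{\bm{u}}_h/\lambda_1(\nabla\underline{\bm{u}}_h)\lesssim\sigma^{-1}$, times $\|\bm{w}\|_{0,2,e_i}$, while the denominator is $\gtrsim\sigma|e_i|$ once $\bm{t}_i^{\mathrm{T}}\nabla\underline{\bm{u}}_h\bm{t}_i\gtrsim\sigma$ is established. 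This yields $|c_i|\lesssim\sigma^{-2}h_T^{-1/2}\|\bm{w}\|_{0,2,e_i}$; a scaled trace inequality $\|\bm{w}\|_{0,2,e_i}^2\lesssim h_T^{-1}\|\bm{w}\|_{0,2,T}^2+h_T|\bm{w}|_{1,2,T}^2$, summation over $\mathscr{T}$, and the Clément bounds \eqref{Clement} for $\bm{w}=\bm{v}-\bar{\Pi}_1\bm{v}$ then give $\|\bar{\Pi}_2\bm{w}\|_{1,2,\Omega_\rho}\lesssim\sigma^{-2}|\bm{v}|_{1,2,\Omega_\rho}$, which completes \eqref{operator1 property}. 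The two powers of $\sigma^{-1}$ enter one each, from the numerator and from the reciprocal of the denominator.

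I expect the main obstacle to be exactly that lower bound on the denominator, i.e.\ showing that the weighted normal cofactor flux $\int_{e_i}\hat{q}_i\,(\operatorname{cof}\nabla\underline{\bm{u}}_h\,\bm{n}_i)\cdot\bm{n}_i\,\mathrm{d}s$ stays $\gtrsim\sigma|e_i|$, equivalently that the tangential stretch $\bm{t}_i^{\mathrm{T}}\nabla\underline{\bm{u}}_h\bm{t}_i$ is positive and bounded away from zero. The distortion and determinant bounds in (H2) do not by themselves control this quantity pointwise, so here the argument must exploit the non-degeneracy in (H2) together with the structure of physical cavitation deformations, namely the nearly radial expansion resolved on the polar-aligned rings $\mathscr{T}'$ near each defect, which keeps the edge tangents close to principal stretch directions and hence the tangential stretch comparable to $\lambda_1(\nabla\underline{\bm{u}}_h)\gtrsim\sigma$.
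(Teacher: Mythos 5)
Your proposal follows essentially the same route as the paper's proof: decompose $\Pi_1=\bar{\Pi}_1+\bar{\Pi}_2(I-\bar{\Pi}_1)$, obtain the orthogonality in \eqref{operator1 property} directly from \eqref{third condition}, and reduce the stability bound to estimating each edge-bubble coefficient as a ratio of edge integrals, with the numerator controlled by the trace inequality together with $|\operatorname{cof}\nabla\underline{\bm{u}}_h|\lesssim\sigma^{-1}$ and the denominator bounded below by $\sigma h_T$ (one factor of $\sigma^{-1}$ from each), followed by scaling, the Cl\'{e}ment bounds \eqref{Clement}, and summation over $\mathscr{T}$. The only substantive difference is that the ``main obstacle'' you flag, namely the lower bound on $\int_{e_i}\hat{p}_i\,(\operatorname{cof}\nabla\underline{\bm{u}}_h\,\bm{n}_i)\cdot\bm{n}_i\,\mathrm{d}s$, is precisely the paper's estimate \eqref{denominator}, which the paper asserts as an immediate consequence of (H2) with no tangential-stretch argument; your caution is warranted (singular-value and determinant bounds alone do not make the form $(\operatorname{cof}F\,\bm{n})\cdot\bm{n}=(F\bm{t})\cdot\bm{t}$ sign-definite, e.g.\ a large rotation satisfies (H2) yet makes it negative), so on that step your treatment is, if anything, more careful than the paper's own.
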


\begin{proof}
Since $\bar{\Pi}_2\in \mathscr{L}(H_E^1(\Omega_\rho),\mathcal{\bar{X}}_h)$ is defined through
\eqref{operator2}, $\bar{\Pi}_2$ on $T_t$ can be explicitly expressed as
$\bar{\Pi}_2\bm{v}|_{T_t}=\sum\limits_{i=1}^3\alpha_i(\bm{v})\bm{p}_i$, which yields
\begin{equation}
\label{alpha i  definition}
\begin{aligned}
\alpha_i=\Big[\int_{e_i}(\operatorname{cof}\nabla\underline{\bm{u}}_h^T\bm{v})
\cdot\bm{n}_i\,\mathrm{d}s\Big]\Big/\Big[\int_{e_i}(\operatorname{cof}
\nabla\underline{\bm{u}}_h^T\bm{p}_i)\cdot \bm{n}_i\, \mathrm{d}s\Big],\quad i =1,2,3.
\end{aligned}
\end{equation}
Noticing that (H2) implies $\sigma\lesssim \lambda_1(\operatorname{cof}\nabla\bm{u}_h)
\lesssim \lambda_2(\operatorname{cof}\nabla \underline{\bm{u}}_h)
\lesssim \sigma^{-1}$, thus one has
\begin{equation}\label{denominator}
\begin{aligned}
\Big|\int_{e_i}(\operatorname{cof}\nabla\underline{\bm{u}}_h^T\bm{p}_i)\cdot\bm{n}_i
\,\mathrm{d}s\Big| =\Big|\int_{e_i}\bm{p}_i\cdot(\operatorname{cof}\nabla
\underline{\bm{u}}_h\bm{n}_i) \,\mathrm{d}s\Big|
\gtrsim \sigma h_{T_t}\int_{\hat{e}_i}\hat{p}_i \,\mathrm{d}\hat{s},
\end{aligned}
\end{equation}
and in addition, by the trace theorem,
\begin{equation}\label{numerator}
\Big|\int_{e_i}(\operatorname{cof}\nabla\underline{\bm{u}}_h^T\bm{v})\cdot \bm{n}_i
\, \mathrm{d}s\Big|\lesssim \lambda_2(\nabla\underline{\bm{u}}_h)
\int_{e_i}|\bm{v}|\,\mathrm{d}s
\cong\dfrac{h_{T_t}}{\sigma}\int_{\hat{e}_i}|\hat{\bm{v}}|
\,\mathrm{d}\hat{s}\lesssim \dfrac{h_{T_t}}{\sigma}\|\hat{\bm{v}}\|_{1,2,\hat{T}_t},
\end{equation}
Therefore, it follows from \eqref{alpha i  definition}-\eqref{numerator} that
\begin{equation}
\label{alpha i}
\begin{aligned}
|\alpha_i|\lesssim \dfrac{1}{\sigma^2}\|\hat{\bm{v}}\|_{1,2,\hat{T}_t},\quad i=1,2,3.
\end{aligned}
\end{equation}

Hence, by the standard scaling argument, one has
\begin{equation}
\label{bar pi2 sim-H1}
|\bar{\Pi}_2\bm{v}|^2_{1,2,T_t}=\Big|\sum_{i=1}^3\alpha_i\bm{p}_i\Big|^2_{1,2,T_t}
\lesssim \dfrac{1}{\sigma^4}(h_{T_t}^{-2}\|\bm{v}\|^2_{0,2,T_t}+|\bm{v}|^2_{1,2,T_t}),\;\;
\forall T_t \in \mathscr{T}.
\end{equation}
The result for $\bar{\Pi}_2$ on $T_q$ has the same form and can be obtained in the same way.
Thus, it follows from \eqref{Clement} that
\begin{equation*}
\begin{aligned}
|\Pi_1\bm{v}|^2_{1,2,\Omega_\rho}
\lesssim &|\bar{\Pi}_1\bm{v}|^2_{1,2,\Omega_\rho}
+\sum_{T}|\bar{\Pi}_2(\bm{v}-\bar{\Pi}_1\bm{v})|^2_{1,2,T}\\ \lesssim
&|\bar{\Pi}_1\bm{v}|^2_{1,2,\Omega_\rho} +\sum_{T}\dfrac{1}{\sigma^4}(h_T^{-2}
\|\bm{v}-\bar{\Pi}_1\bm{v}\|^2_{0,2,T}+|\bm{v}-\bar{\Pi}_1\bm{v}|^2_{1,2,T})\lesssim
\dfrac{1}{\sigma^4}|\bm{v}|_{1,2,\Omega_\rho}^2.
\end{aligned}
\end{equation*}
This together with the Poincar\'{e}-Friedrichs inequality implies that the
inequality in \eqref{operator1 property} holds, since $\partial_D\Omega_\rho \neq \varnothing$.
In addition, by \eqref{third condition}, we have, for all $\bm{v}\in H_E^1(\Omega_\rho)$,
\begin{equation*}
\int_T \operatorname{cof}\nabla \underline{\bm{u}}_h:
\nabla(\Pi_1\bm{v}-\bm{v})\,\mathrm{d}\bm{x}=\int_T \operatorname{cof}\nabla \underline{\bm{u}}_h:
\nabla\big(\bar{\Pi}_2
(\bm{v}-\bar{\Pi}_1\bm{v})-(\bm{v}-\bar{\Pi}_1\bm{v})\big)\,\mathrm{d}\bm{x}=0.
\end{equation*}
This completes the proof of the lemma.
\end{proof}

\begin{Theorem} Let hypotheses (H1)-(H3) hold, and $(\mathcal{X}_{h,E},\mathcal{M}_h)$ be
given by \eqref{test function} and \eqref{discrete admissible set P}.
Then, there exists a constant $\beta>0$ independent of $h$ such that
$b(\bm{v}_h,q_h;\underline{\bm{u}}_h)$ satisfies the LBB condition \eqref{LBB condition}.
\end{Theorem}
\begin{proof}
According to Lemma~\ref{2 steps construction}, we only need to show
\eqref{operator construction}$_2$, i.e.,
$\Vert \Pi_2(I-\Pi_1)\bm{v}\Vert_{1,2,\Omega_\rho}\le c_2 \Vert \bm{v}\Vert _{1,2,\Omega_\rho}$,
$\forall \bm{v}\in H_E^1(\Omega_\rho)$, since \eqref{operator construction}$_1$ is a conclusion of
Lemma~\ref{Pi_1 lemma}; and \eqref{operator construction}$_3$ follows as a consequence of the
definition of $\Pi_2$ (see \eqref{operator2 construction}).

Recall that $\Pi_2\bm{v}\in \mathcal{B}_h$ (see \eqref{bubble}) and
$\operatorname{div}(\operatorname{cof}\nabla\bm{\underline{u}}_h|_{T})=0$,
$\forall T \in \mathscr{T}$, by a change of integral variables and the integral by parts,
\eqref{operator2 construction} can be rewritten as
\begin{equation}
\label{rewrite pi2 hat}
\int_{\hat{T}}\widehat{\Pi_2\bm{v}}\cdot \big(\operatorname{cof}\nabla_{\hat{x}}
\underline{\bm{\hat{u}}}_h\nabla_{\hat{x}}\hat{q}_h\big) \, \mathrm{d}\bm{\hat{x}}
=\int_{\hat{T}} \hat{q}_h \operatorname{cof}\nabla_{\hat{x}} \underline{\bm{\hat{u}}}_h:
\nabla_{\hat{x}}\bm{\hat{v}} \,\mathrm{d}\bm{\hat{x}}, \;\;
\forall \hat{q}_h\in P_1(\hat{T})\setminus P_0(\hat{T}),
\end{equation}
where $\nabla_{\hat{x}}:=(\partial_{\hat{x}_1}, \partial_{\hat{x}_2})$. Taking $T_t$ as an example, we can write $\widehat{\Pi_2\bm{v}}(\hat{\bm{x}})$ on $T_t$ explicitly as
$\widehat{\Pi_2\bm{v}}(\hat{\bm{x}})|_{T_t}= (\alpha_1\hat{\lambda}_1\hat{\lambda}_2\hat{\lambda}_3,
\alpha_2\hat{\lambda}_1\hat{\lambda}_2\hat{\lambda}_3)$ with
\begin{equation}
\label{bm alpha}
\bm{\alpha} = \begin{pmatrix} \alpha_1 \\ \alpha_2 \end{pmatrix} =
\left(\int_{\hat{T}_t}\hat{b}\operatorname{cof}\nabla_{\hat{x}}\underline{\hat{\bm{u}}}_h
\,\mathrm{d}\bm{\hat{x}}\right)^{-1} \begin{pmatrix} \int_{\hat{T}_t}
\operatorname{cof}\nabla_{\hat{x}}\underline{\hat{\bm{u}}}_h: \nabla_{\hat{x}}\hat{\bm{v}}
\ \hat{x}_1\,\mathrm{d}\bm{\hat{x}} \\
\int_{\hat{T}_t}\operatorname{cof}\nabla_{\hat{x}}\underline{\hat{\bm{u}}}_h:
\nabla_{\hat{x}}\hat{\bm{v}}\ \hat{x}_2\,\mathrm{d}\bm{\hat{x}}
\end{pmatrix},
\end{equation}
where $\hat{b}=\hat{\lambda}_1\hat{\lambda}_2\hat{\lambda}_3$. Again, since (H2) implies
$\sigma\lesssim \lambda_1(\operatorname{cof}\nabla\bm{u}_h)
\lesssim \lambda_2(\operatorname{cof}\nabla \underline{\bm{u}}_h)
\lesssim \sigma^{-1}$,
it follows from the H\"{o}lder inequality that
\begin{equation}
\Big|\int_{\hat{T}_t}\operatorname{cof}\nabla_{\hat{x}}\underline{\hat{\bm{u}}}_h:
\nabla_{\hat{x}}\bm{\hat{v}}\ \hat{x}_i\,\mathrm{d}\bm{\hat{x}}\Big|
\lesssim \dfrac{h_{T_t}}{\sigma}|\hat{\bm{v}}|_{1,2,\hat{T}_t}\|\hat{x}_i\|_{0,2,\hat{T}_t}
\lesssim \dfrac{h_{T_t}}{\sigma}|\hat{\bm{v}}|_{1,2,\hat{T}_t},\quad i=1,2.
\end{equation}
On the other hand, noticing that $\underline{\bm{u}}_h|_{T_t}\in P_2^+(\hat{T})$,
by direct calculations (similar to that in the proof of Theorem 3.1 in \cite{SuLiRectan}) and (H2),
one has
\begin{equation}
\det\Big(\int_{\hat{T}_t}\hat{b} \operatorname{cof}\nabla_{\hat{x}}\underline{\hat{\bm{u}}}_h(\bm{x})
\,\mathrm{d}\bm{\hat{x}}\Big)\cong\det\nabla\underline{\bm{u}}_h(\bm{a}_{123})h_{T_t}^2\cong h_{T_t}^2.
\end{equation}
Thus, again by (H2),
\begin{equation}
\label{coefficient matrix}
\Big|\big(\int_{\hat{T}_t}\hat{b}\operatorname{cof}\nabla_{\hat{x}}\underline{\hat{\bm{u}}}_h
\, \mathrm{d}\bm{\hat{x}}\big)^{-1}\Big|\cong h_{T_t}^{-2}\Big|\int_{\hat{T_t}}\hat{b}
\nabla_{\hat{x}}\underline{\hat{\bm{u}}}_h\,\mathrm{d}\bm{\hat{x}}\Big|
\lesssim h_{T_t}^{-2} \|\hat{b}\|_{0,2,\hat{T}_t}\|\nabla_{\hat{x}}\underline{\hat{\bm{u}}}_h
\|_{0,2,\hat{T}_t}\lesssim \dfrac{1}{\sigma h_{T_t}}.
\end{equation}
Therefore, it follows from \eqref{bm alpha}-\eqref{coefficient matrix} and the standard
scaling argument that
\begin{equation}
\label{pi2 bubble}
|\Pi_2\bm{v}|_{1,2,T_t}\cong |\widehat{\Pi}_2\bm{v}|_{1,2,\hat{T}_t}\cong |\bm{\alpha}|\lesssim
\dfrac{1}{\sigma^2}|\hat{\bm{v}}|_{1,2,\hat{T}_t}\cong \dfrac{1}{\sigma^2}|\bm{v}|_{1,2,T_t}.
\end{equation}
$\Pi_2\bm{v}$ on $T_q$ has exactly the same result as \eqref{pi2 bubble}
(see also \cite{HuangLi2017}). Finally, by $\eqref{operator1 property}_1$,
\eqref{pi2 bubble}, and Poincar\'{e}-Friedrichs inequality, we have
\begin{equation}
\label{beta magnitude}
\|\Pi_2(I-\Pi_1)\bm{v}\|_{1,2,\Omega_\rho}\lesssim \dfrac{1}{\sigma^2}\|(I-\Pi_1)\bm{v}
\|_{1,2,\Omega_\sigma} \lesssim\dfrac{1}{\sigma^4}\|\bm{v}\|_{1,2,\Omega_\rho},
\  \forall \bm{v}\in H_E^1(\Omega_\rho),
\end{equation}
and complete the proof of the theorem.
\end{proof}

\section{Convergence analysis of the method}

The framework for the convergence analysis of the finite element cavitation solutions
of \eqref{discrete Euler Lagrange equation} is the same as established in
\cite{HuangLi2017} for the DP-Q2-P1 finite element cavitation solutions. However, for the
integrity of the paper and convenience of the readers, the complete analysis is presented
below.

The cavitation solution $\tilde{\bm{u}}$ is assumed to be an absolute energy minimizer
of $E(\cdot)$ in $\mathcal{A}_I$ (see \eqref{functional} \eqref{admissible set incompressible})
and is assumed to have the following regularity:
$\bm{u}\in C^4(\Omega_\rho;\mathbb{R}^2)$ and in a neighborhood of each defect, expressed in
the local polar coordinate system $\bm{u}(\bm{x})=(r\cos\phi,r\sin\phi)$ with
$r=r(R,\theta)$, $\phi=\phi(R,\theta)$,
\begin{equation}
\label{UUpsilon}
\big|\dfrac{\partial\bm{u}}{\partial\theta}\big|\cong 1 \;\; \text{and} \;\;
 \big|\frac{\partial^{i+j}r}{
 \partial R^i\partial \theta^j}\big|\le \Upsilon,\;
\big|\frac{\partial^{i+j}\phi}{\partial R^i\partial\theta^j}\big|\le \Upsilon, \;\;
\forall i,j\ge 0, \; i+j\le 4,
\end{equation}
where $\Upsilon$ is a constant independent of the initial defect size $\rho$.
Denote
\begin{equation*}
 \mathcal{U}(\Upsilon)=\Big\{\bm{u}\in C^4(\Omega_\rho;\mathbb{R}^2)
 :\bm{u}(\bm{x})\ \text{satisfies \eqref{UUpsilon} in a neighborhood of each defect}\Big\}.
\end{equation*}

Let $\mathscr{T}=\mathscr{T}' + \mathscr{T}''$ (see Fig~\ref{T}) be a regular triangulation of
$\overline{\Omega}_{\rho}$ satisfying (H1) with mesh size $h$. Let $\epsilon$ and
$\tau$ denote respectively the inner radius and the thickness of a circular ring layer
$B_{\epsilon+\tau}(\bm{x}_k)\setminus B_{\epsilon}(\bm{x}_k)$, produced by $\mathscr{T}'$ in
a neighborhood of a defect as shown in Fig~\ref{T'}, and let $N$ be the number
of the subdivision on the circular direction of the layer.
Let $\Pi^2_h: \mathcal{A} \cap C(\overline{\Omega}_{\rho}; \mathbb{R}^2) \to \mathcal{A}_h$
(see \eqref{discrete admissible set A}) be the interpolation operator. We have the following
interpolation error estimates (see \cite{SuLiRectan,SuLi2018}).

\begin{Lemma}
\label{known results of Su}
Let $\Omega_\rho = B_1(\bm{0})\setminus \cup_{k=1}^K B_{\rho_k}(\bm{x}_k)$, and
$\bm{u}(\bm{x}) \in \mathcal{A}\cap\mathcal{U}(\Upsilon)$.
Let $\mathscr{T}'$ be a circular ring layered mesh on
$\cup_{k=1}^K (B_{\delta_k}(\bm{x}_k)\setminus B_{\rho_k}(\bm{x}_k))$ satisfying that,
for a given constant $\alpha\in(0,1)$,
$\tau\lesssim \min\{\sqrt{\epsilon},\epsilon^{(1-\alpha)/4} h\}$, and
$N^{-1}\lesssim \epsilon^{(1-\alpha)/4} h$ if the layer is a standard layer, while
$N \lesssim \min\{(\epsilon\tau)^{1/4},(\tau h^2)^{1/4}\}$ if the layer is a conforming layer.
Denote $\Omega_k=B_{\delta_k}(\bm{x}_k)\setminus B_{\rho_k}(\bm{x}_k)$ for $1\le k\le K$,
and $\Omega_{K+1}=B_1(\bm{0})\setminus \cup_{k=1}^KB_{\delta_k}(\bm{x}_k)$.
Then, we have
\begin{equation}
\label{known error1}
\begin{aligned}
|\det\nabla\Pi_h^2\bm{u}(\bm{x})-\det\nabla\bm{u}(\bm{x})|\lesssim
|\bm{x}-\bm{x}_k|^{-1}(\tau^2+N^{-2}),
\quad \forall \bm{x} \in \Omega_k, \; 1\le k \le K,
\end{aligned}
\end{equation}
\begin{equation}
\label{known error1.5}
|\det\nabla\Pi_h^2\bm{u}(\bm{x})-\det\nabla\bm{u}(\bm{x})|\lesssim h^2, \quad
\forall \bm{x} \in \Omega_{K+1},
\end{equation}
\begin{equation}
\begin{aligned}
\label{known error2}
\| \det\nabla\Pi^2_h\bm{u}-\det\nabla\bm{u}\|_{0,2,\Omega_k}\lesssim h^2, \quad 1\le k \le K+1,
\end{aligned}
\end{equation}
\begin{equation}
\begin{aligned}
\label{known error3}
\Big|\int_{\Omega_k}|\Pi^2_h\bm{u}(\bm{x})|^s-
|\bm{u}(\bm{x})|^s \,\mathrm{d}\bm{x}\Big| \lesssim h^2, \quad 1\le k \le K+1,
\end{aligned}
\end{equation}
\begin{equation}
\begin{aligned}
\label{known error4}
\Big|\int_{\Omega_k}d(\det\nabla\Pi^2_h\bm{u})
-d(\det\nabla\bm{u}) \,\mathrm{d}\bm{x}\Big| \lesssim h^3 \quad 1\le k \le K+1.
\end{aligned}
\end{equation}
\end{Lemma}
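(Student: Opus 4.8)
The plan is to split $\Omega_\rho=\bigcup_{k=1}^{K}\Omega_k\cup\Omega_{K+1}$ and estimate the interpolation error separately on the near-defect annular layers produced by $\mathscr{T}'$ and on the quasi-uniform far region $\Omega_{K+1}$, since the two zones are governed by entirely different length scales. On $\Omega_{K+1}$ the deformation $\bm{u}$ is $C^4$ with bounds independent of $\rho$ and the mesh is regular of size $h$, so the standard iso-parametric $Q_2$/$P_2$ interpolation theory, transported through the element maps $F_T$, immediately yields $\|\nabla\Pi_h^2\bm{u}-\nabla\bm{u}\|_{0,\infty,\Omega_{K+1}}\lesssim h^2$ and hence the pointwise bound \eqref{known error1.5} together with the $\Omega_{K+1}$ contributions to \eqref{known error2}--\eqref{known error4}. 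This part is routine and I would dispatch it first.

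The substance lies in the annular layers $\Omega_k$, where $\bm{u}$ stretches enormously in the radial direction and the elements are strongly anisotropic. First I would pull each element back to the reference element via $F_T$ and re-express $\bm{u}=(r\cos\phi,r\sin\phi)$ with $r=r(R,\theta)$, $\phi=\phi(R,\theta)$ as in \eqref{UUpsilon}, so that the entries of $\nabla\bm{u}$ and the Jacobian $\det\nabla\bm{u}$ become explicit functions of the first derivatives of $r$ and $\phi$, all controlled by $\Upsilon$. Carrying the $P_2/Q_2$ interpolation remainder through this representation while tracking the two independent scales --- radial thickness $\tau$ and angular width $N^{-1}$ --- and retaining the polar weight together with the factor $r/R$ that appears in the Jacobian, produces the anisotropic pointwise estimate \eqref{known error1}, in which the weight $|\bm{x}-\bm{x}_k|^{-1}=R^{-1}$ is exactly this polar factor; the conforming layers are handled the same way under their own mesh condition on $N$. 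The mesh hypotheses on $\tau$ and $N$ are precisely calibrated so that squaring \eqref{known error1}, integrating against the polar area element, and summing over all layers of a defect gives the uniform $L^2$ bound \eqref{known error2}; the $\rho$-independence of the constant is the whole point of the relative-error equidistribution meshing strategy.

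For the two stored-energy integrals I would Taylor-expand the nonlinearity pointwise. For \eqref{known error3} a mean-value expansion of the $|\cdot|^s$ term reduces the integrand to a first-order contribution plus a quadratic remainder; combined with the interpolation estimates from the previous step and the integrability of the polar weights, this gives the $L^1$-type bound $h^2$. The delicate estimate is \eqref{known error4}. Writing
$$d(\det\nabla\Pi_h^2\bm{u})-d(\det\nabla\bm{u})=d'(\det\nabla\bm{u})\,(\det\nabla\Pi_h^2\bm{u}-\det\nabla\bm{u})+\tfrac12 d''(\xi)(\det\nabla\Pi_h^2\bm{u}-\det\nabla\bm{u})^2,$$
the quadratic remainder integrates to $O(h^4)$ by \eqref{known error2}, so everything hinges on showing that the first-order term is $O(h^3)$ rather than the naive $O(h^2)$.

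The extra order of $h$ I would extract from the null-Lagrangian structure of the determinant. On each element the Piola identity $\operatorname{div}(\operatorname{cof}\nabla\bm{w})=0$ lets one rewrite $\int_T\det\nabla\bm{w}=\tfrac12\int_{\partial T}\bm{w}\cdot\operatorname{cof}\nabla\bm{w}\,\bm{n}\,\mathrm{d}s$ for any piecewise-smooth $\bm{w}$, so $\int_T(\det\nabla\Pi_h^2\bm{u}-\det\nabla\bm{u})$ reduces to boundary integrals that the iso-parametric interpolation reproduces to one order higher, since $\Pi_h^2\bm{u}$ matches $\bm{u}$ at every edge node; after absorbing the $O(h)$ variation of $d'(\det\nabla\bm{u})$ across each element this upgrades the first-order term to $O(h^3)$. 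This weak-continuity gain is the heart of the argument and the step I expect to be the main obstacle, as it is precisely what drives the energy convergence in \S4; everything else is scaling bookkeeping built on \eqref{UUpsilon} and the mesh conditions.
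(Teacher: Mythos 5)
The first thing to record is that the paper itself contains no proof of this lemma: it is imported wholesale as a package of known interpolation estimates, with the reader pointed to \cite{SuLiRectan,SuLi2018} (``We have the following interpolation error estimates (see ...)''). So there is no internal argument to compare yours against; what you have written is a reconstruction of the analysis carried out in those two references. At the level of architecture your reconstruction is faithful to them: the far-field/near-defect splitting, the pull-back to polar coordinates using the $\Upsilon$-bounds of \eqref{UUpsilon}, the two independent anisotropic scales $\tau$ and $N^{-1}$ producing the weighted pointwise bound \eqref{known error1}, and the calibration of the mesh hypotheses so that the weighted square integrates over the ring layers to $O(h^4)$, are the right ingredients (you also correctly read \eqref{known error3} as a statement about gradients, which is how it is used in Lemma~\ref{lemma energy error}).

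The genuine gap is in your treatment of \eqref{known error4}, which you yourself flag as the heart of the matter: the Piola-identity step, as you state it, does not produce the extra power of $h$. Writing $\int_T\det\nabla\bm{w}\,\mathrm{d}\bm{x}=\tfrac12\int_{\partial T}\bm{w}\cdot(\operatorname{cof}\nabla\bm{w})\bm{n}\,\mathrm{d}s$ and subtracting, the edge integrand contains $\bm{u}\cdot\big((\operatorname{cof}\nabla\Pi_h^2\bm{u}-\operatorname{cof}\nabla\bm{u})\bm{n}\big)$; in two dimensions $(\operatorname{cof}\nabla\bm{w})\bm{n}$ is the rotated tangential derivative along the edge, and the tangential derivative of the quadratic interpolation error is only $O(h^2)$ there, so each edge contributes $O(h^3)$. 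Interior edges cancel exactly (the integrand is single-valued, the normals opposite), but the surviving edges on $\partial\Omega_k$ number $O(h^{-1})$, leaving a total of $O(h^2)$ --- precisely the naive bound you were trying to beat, so ``matching at edge nodes gives one order higher'' is the step that fails. The actual gain requires cancellations your sketch never invokes: the quadratic interpolation error on an edge with symmetric nodes (endpoints plus midpoint) satisfies $\int_e(\bm{u}-\Pi_h^2\bm{u})\,\mathrm{d}s=O(h^5)$, which, after integrating by parts along each boundary edge (the endpoint terms vanish because $\Pi_h^2\bm{u}=\bm{u}$ there), upgrades the edge contributions; and near the defects, where such isotropic superconvergence orders are unavailable, a separate anisotropic computation under the stated mesh conditions is what limits the final bound to $h^3$. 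A further unaddressed point: bounding your quadratic remainder $\tfrac12 d''(\xi)(\cdot)^2$ requires $d''(\xi)$ bounded, i.e. $\det\nabla\Pi_h^2\bm{u}$ bounded away from $0$ (recall $d_1$ blows up at $0$); this is not free in the innermost layers and is exactly what the orientation-preservation conditions $\tau\lesssim\sqrt{\epsilon}$, $N^{-1}\lesssim\sqrt{\epsilon}$, combined with \eqref{known error1}, are there to guarantee.
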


\begin{Lemma}
\label{lemma energy error}
Let $(\bm{\tilde{u}},\tilde{p}) \in (\mathcal{A}\cap \mathcal{U}(\Upsilon))\times
H^{\gamma+1}(\Omega_\rho)$, $\gamma =0$ or $1$, be a solution to problem
\eqref{weak form of Euler Lagrange equation} with $\bm{\tilde{u}}$ being an absolute minimizer
of $E(\cdot)$ in $\mathcal{A}_I$.
Let $\mathscr{T}'$ satisfy the conditions in Lemma~\ref{known results of Su}.
Let $(\bm{u}_h, p_h)\in \mathcal{X}_h\times
\mathcal{M}_h$ be the finite element solutions
to problem \eqref{discrete Euler Lagrange equation} with
$\|p_h\|_{0,2,\Omega_{\rho}} \lesssim h^{-\beta}$ for a constant $\beta\in[0,2)$. Then
\begin{equation}
\label{theoretical error of FES energy}
-h^{\gamma+1}\lesssim E(\bm{u}_h, p_h)-E(\bm{\tilde{u}},\tilde{p}) \lesssim h^{2-\beta},
\end{equation}
\begin{equation}
\label{u_h det u_h bounded}
\|\bm{u}_h\|_{1,s,\Omega_{\rho}} \lesssim 1, \quad
\|\det\nabla \bm{u}_h \|_{0,2,\Omega_\rho} \lesssim 1.
\end{equation}
\end{Lemma}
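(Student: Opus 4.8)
The plan is to reduce the two mixed energies to the stored energy alone, then bound the resulting gap from above by comparison with the interpolant and from below by comparison with the exact minimiser, and finally to read off the a priori bounds from the upper estimate. First I would note that testing the second equation of \eqref{discrete Euler Lagrange equation} with $q_h = p_h \in \mathcal{M}_h$ gives $\int_{\Omega_\rho} p_h(\det\nabla\bm{u}_h - 1)\,\mathrm{d}\bm{x} = 0$, so $E(\bm{u}_h,p_h) = \int_{\Omega_\rho} W(\nabla\bm{u}_h)\,\mathrm{d}\bm{x} =: E(\bm{u}_h)$; likewise, since $\det\nabla\tilde{\bm{u}} = 1$ a.e., $E(\tilde{\bm{u}},\tilde p) = \int_{\Omega_\rho} W(\nabla\tilde{\bm{u}})\,\mathrm{d}\bm{x} =: E(\tilde{\bm{u}})$. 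Thus the quantity to estimate is the genuine energy gap $E(\bm{u}_h) - E(\tilde{\bm{u}})$, and the whole argument rests on the polyconvexity of $W(F) = \mu|F|^s + d(\det F)$ (both $\mu|\cdot|^s$ and $d$ being convex) together with the $2\times 2$ identity $\det(A+B) = \det A + \operatorname{cof}A:B + \det B$.

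For the upper bound I would set $\bm{u}_I = \Pi_h^2\tilde{\bm{u}}$ and split $E(\bm{u}_h) - E(\tilde{\bm{u}}) = \big(E(\bm{u}_h) - E(\bm{u}_I)\big) + \big(E(\bm{u}_I) - E(\tilde{\bm{u}})\big)$, the second bracket being $\lesssim h^2$ directly from \eqref{known error3} and \eqref{known error4}. For the first bracket I would apply polyconvexity with base point $\nabla\bm{u}_h$ and increment $\nabla(\bm{u}_I - \bm{u}_h)$, discarding the nonnegative remainder, and then eliminate the linear stress term using \eqref{discrete Euler Lagrange equation}$_1$ with the admissible test function $\bm{v}_h = \bm{u}_I - \bm{u}_h \in \mathcal{X}_{h,E}$. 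Substituting the determinant identity, cancelling the $d'(\det\nabla\bm{u}_h)$ contributions, and using that both $p_h$ and the constant $1$ lie in $\mathcal{M}_h$, this collapses to $E(\bm{u}_h) - E(\bm{u}_I) \le -\int_{\Omega_\rho} p_h(\det\nabla\bm{u}_I - 1)\,\mathrm{d}\bm{x} - \int_{\Omega_\rho}(d'(\det\nabla\bm{u}_h) - p_h)\det\nabla(\bm{u}_I - \bm{u}_h)\,\mathrm{d}\bm{x}$. The first integral is bounded by $\|p_h\|_{0,2,\Omega_\rho}\|\det\nabla\bm{u}_I - 1\|_{0,2,\Omega_\rho} \lesssim h^{-\beta}h^2 = h^{2-\beta}$ via \eqref{known error2} and the hypothesis on $p_h$. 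The a priori bounds \eqref{u_h det u_h bounded} should be extracted at this point, before the lower bound: since $E(\bm{u}_h) \lesssim E(\tilde{\bm{u}}) + h^{2-\beta} \lesssim 1$, coercivity of $\mu|\nabla\bm{u}_h|^s$ together with Poincar\'e--Friedrichs (as $\partial_D\Omega_\rho \neq \varnothing$) gives $\|\bm{u}_h\|_{1,s,\Omega_\rho}\lesssim 1$, while $\kappa\|\det\nabla\bm{u}_h - 1\|_{0,2,\Omega_\rho}^2 \le \int_{\Omega_\rho} d(\det\nabla\bm{u}_h)\,\mathrm{d}\bm{x} \le E(\bm{u}_h) \lesssim 1$ yields the determinant bound.

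For the lower bound I would run the symmetric argument with base point $\nabla\tilde{\bm{u}}$. Polyconvexity gives $W(\nabla\bm{u}_h) - W(\nabla\tilde{\bm{u}}) \ge \mu s|\nabla\tilde{\bm{u}}|^{s-2}\nabla\tilde{\bm{u}}:\nabla\bm{w} + d'(1)(\det\nabla\bm{u}_h - 1) + R$ with $\bm{w} = \bm{u}_h - \tilde{\bm{u}}$ and remainder $R \ge \kappa(\det\nabla\bm{u}_h - 1)^2 \ge 0$. I would replace the first term using \eqref{weak form of Euler Lagrange equation}$_1$ with $\bm{v} = \bm{w} \in H^1_E(\Omega_\rho)$, apply the determinant identity, and use that the constant $d'(1)$ pairs with the discrete constraint to annihilate $d'(1)\int_{\Omega_\rho}(\det\nabla\bm{u}_h - 1)\,\mathrm{d}\bm{x}$. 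What survives is $E(\bm{u}_h) - E(\tilde{\bm{u}}) \ge \int_{\Omega_\rho}(\tilde p - q_h)(\det\nabla\bm{u}_h - 1)\,\mathrm{d}\bm{x} - \int_{\Omega_\rho}(\tilde p - d'(1))\det\nabla\bm{w}\,\mathrm{d}\bm{x} + R$ for any $q_h \in \mathcal{M}_h$; taking $q_h$ to be the $P_1$ approximation of $\tilde p \in H^{\gamma+1}(\Omega_\rho)$ bounds the first integral by $\|\tilde p - q_h\|_{0,2,\Omega_\rho}\|\det\nabla\bm{u}_h - 1\|_{0,2,\Omega_\rho} \lesssim h^{\gamma+1}$ (using \eqref{u_h det u_h bounded}), which is precisely where the exponent $\gamma+1$ enters.

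The main obstacle is the pair of weighted null-Lagrangian cross terms $\int_{\Omega_\rho}(\cdot)\det\nabla(\cdot)\,\mathrm{d}\bm{x}$ left over in both estimates: because the cavitation error $\bm{u}_h - \tilde{\bm{u}}$ need not be small in $H^1$, these cannot be bounded naively. The remedy I would pursue is to exploit that for $\bm{e}=(e_1,e_2)$ one has $\det\nabla\bm{e} = \operatorname{div}(e_1\partial_2 e_2,\, -e_1\partial_1 e_2)$, integrate by parts to transfer a derivative onto the smooth weight ($\tilde p - d'(1)$ in the lower estimate, $d'(\det\nabla\bm{u}_h) - p_h$ in the upper one), and thereby reduce each term to lower-order norms of $\bm{e}$ times $\|\nabla\bm{e}\|$, controlled through \eqref{u_h det u_h bounded}, the regularity of $(\tilde{\bm{u}},\tilde p)$, and the displacement and traction-free boundary conditions. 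For the upper-estimate term the weight carries the factor $p_h$, so one must track the $h^{-\beta}$ growth carefully against the interpolation gain; this is the most delicate point of the proof.
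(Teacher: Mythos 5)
Your leading-order terms are the right ones, and your extraction of \eqref{u_h det u_h bounded} from $E(\bm{u}_h,p_h)\lesssim 1$ (positivity of $d$, vanishing of the constraint term, Poincar\'{e}--Friedrichs) matches the paper exactly. However, your route through the first-order conditions plus polyconvexity has a genuine gap which the paper's proof is specifically structured to avoid. The paper never expands $W$ around $\nabla\bm{u}_h$ or $\nabla\tilde{\bm{u}}$ and never tests the Euler--Lagrange equations: it uses \emph{minimization} properties directly. For the upper bound it invokes that $\bm{u}_h$ minimizes $E(\cdot,p_h)$ over $\mathcal{X}_h$, so $E(\bm{u}_h,p_h)\le E(\Pi_h^2\tilde{\bm{u}},p_h)$ with no remainder whatsoever, and the difference $E(\Pi_h^2\tilde{\bm{u}},p_h)-E(\tilde{\bm{u}},\tilde p)$ splits into three terms handled by \eqref{known error3}, \eqref{known error4} and \eqref{known error2}. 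For the lower bound it invokes that $\tilde{\bm{u}}$ minimizes $E(\cdot,\tilde p)$ over $\mathcal{A}$, giving $E(\tilde{\bm{u}},\tilde p)\le E(\bm{u}_h,\tilde p)=E(\bm{u}_h,p_h)-\int_{\Omega_\rho}(\tilde p-\operatorname{P}_h^\gamma\tilde p)(\det\nabla\bm{u}_h-1)\,\mathrm{d}\bm{x}$, again with no quadratic remainder. Your cross terms $\int_{\Omega_\rho}(d'(\det\nabla\bm{u}_h)-p_h)\det\nabla\bm{e}\,\mathrm{d}\bm{x}$ and $\int_{\Omega_\rho}(\tilde p-d'(1))\det\nabla\bm{w}\,\mathrm{d}\bm{x}$ are precisely the price of replacing minimality by criticality, and they cannot be absorbed.

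Concretely, the integration-by-parts remedy you sketch fails on three counts. First, in the upper estimate the weight $d'(\det\nabla\bm{u}_h)-p_h$ is only piecewise smooth ($p_h$ is discontinuous $P_1$, $\nabla\bm{u}_h$ jumps across edges), so writing $\det\nabla\bm{e}=\operatorname{div}(e_1\partial_2 e_2,\,-e_1\partial_1 e_2)$ and integrating by parts must be done element by element, producing inter-element jump terms and element-wise derivatives of $d'(\det\nabla\bm{u}_h)$ (i.e.\ second derivatives of $\bm{u}_h$) for which nothing is available except inverse estimates costing negative powers of $h$ --- on top of the factor $\|p_h\|_{0,2,\Omega_\rho}\lesssim h^{-\beta}$ you rightly worry about. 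Second, even with a smooth weight, after integration by parts you need smallness of lower-order norms of the error, but $\bm{e}=\Pi_h^2\tilde{\bm{u}}-\bm{u}_h$ and $\bm{w}=\bm{u}_h-\tilde{\bm{u}}$ are not known to be small in \emph{any} norm at this stage: this lemma is the input to the convergence theorem, not a consequence of it, and the only uniform information is $\|\bm{u}_h\|_{1,s,\Omega_\rho}\lesssim 1$ with $s<2$, so quantities like $\|\bm{w}\|_{0,2,\Omega_\rho}\|\nabla\bm{w}\|_{0,2,\Omega_\rho}$ are at best $O(1)$ and $\|\nabla\bm{w}\|_{0,2,\Omega_\rho}$ is not even bounded uniformly in $h$. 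Third, the boundary terms do not vanish on the traction-free cavity boundaries $\cup_k\partial B_{\rho_k}(\bm{x}_k)$ (only $\partial_D\Omega_\rho$ kills $\bm{e}$ and $\bm{w}$), and traces of $e_1\nabla e_2$ there are uncontrolled. The repair is not a more careful treatment of your cross terms but abandoning the expansion altogether: use the two minimization properties as the paper does, after which the whole lemma reduces to the interpolation estimates of Lemma~\ref{known results of Su} and the projection estimate for $\operatorname{P}_h^\gamma$.
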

\begin{proof}
Firstly, $(\bm{u}_h, p_h)$ solves problem \eqref{discrete Euler Lagrange equation} implies
$\bm{u}_h$ minimizes $E(\bm{v}_h,p_h)$ in $\mathcal{X}_h$, hence
$E(\bm{u}_h,p_h)\le E(\Pi^2_h \bm{\tilde{u}},p_h)$. On the other hand, since
$\det\nabla\tilde{\bm{u}}=1, a.e.$ in $\Omega_\rho$, one has
\begin{equation*}
\begin{aligned}
& E(\Pi^2_h \bm{\tilde{u}}, p_h)
=E(\bm{\tilde{u}},\tilde{p})+\int_{\Omega_{\rho}}\mu\big(|\nabla\Pi^2_h
\bm{\tilde{u}}|^s-|\nabla\bm{\tilde{u}}|^s\big) +\big(d(\det\nabla\Pi_h^2\bm{\tilde{u}})
-d(\det\nabla\bm{\tilde{u}})\big)\, \mathrm{d}\bm{x}\\
&\qquad \qquad \qquad \qquad \qquad \qquad \quad  -\int_{\Omega_{\rho}} p_h(\det\nabla\Pi^2_h\bm{\tilde{u}}-1)
\,\mathrm{d}\bm{x}=E(\bm{\tilde{u}},\tilde{p})+ I_1+I_2+I_3.
\end{aligned}
\end{equation*}
It follows from \eqref{known error3} and \eqref{known error4} in Lemma~\ref{known results of Su}
that $|I_1|\lesssim h^2$ and $|I_2|\lesssim h^3$. By the H\"{o}lder inequality,
\eqref{known error2} and $\|p_h\|_{0,2,\Omega_{\rho}} \lesssim h^{-\beta}$, one concludes that
\begin{equation*}
|I_3|\le\| p_h\|_{0,2,\Omega_\rho}\|\det\nabla\Pi^2_h
\bm{\tilde{u}}-1\|_{0,2,\Omega_\rho}\lesssim h^{2-\beta}.
\end{equation*}
Thus, the second relationship in
\eqref{theoretical error of FES energy} holds, and consequently $E(\bm{u}_h, p_h) \lesssim 1$,
which implies \eqref{u_h det u_h bounded}, since $d(\det\nabla \bm{u}_h)>0$ and
$\int_{\Omega_{\rho}}p_h(\det\nabla \bm{u}_h-1)\,\mathrm{d}\bm{x}=0$.

Secondly, due to $(\bm{\tilde{u}},\tilde{p}) = \arg \sup_{q \in L^2(\Omega_\rho)}
\inf_{\bm{v} \in \mathcal{A}} E(\bm{v},q)$, $\bm{\tilde{u}}$ minimizes
$E(\bm{v},\tilde{p})$ in $\mathcal{A}$. Hence, by $\int_{\Omega_{\rho}}q_h(\det\nabla \bm{u}_h-1)
\,\mathrm{d}\bm{x}=0$, $\forall q_h\in \mathcal{M}_h$ (see \eqref{discrete Euler Lagrange equation}),
one has
\begin{equation*}
E(\bm{\tilde{u}},\tilde{p})\le E(\bm{u}_h,\tilde{p})
= E(\bm{u}_h,p_h)-\int_{\Omega_\rho}(\tilde{p}-\operatorname{P}_h^{\gamma} \tilde{p})
(\det\nabla\bm{u}_h-1) \,\mathrm{d}\bm{x},
\end{equation*}
where $\operatorname{P}_h^{\gamma}:H^{\gamma+1}(\Omega_\rho)\to \mathcal{M}_h$, $\gamma=0$
or $1$, is an orthogonal projection operator with $\operatorname{P}_h^{\gamma}|_T=
\operatorname{P}_T^{\gamma}: H^{\gamma+1}(T)\to F_T(P_{\gamma})$ being defined by
\begin{equation*}
\int_{T}q (\operatorname{P}_{T}^{\gamma}p-p)\det \nabla\hat{\bm{x}}\, \mathrm{d}\bm{x} = 0,
\quad \forall q\in F_T(P_{\gamma}), \;\;\; \forall T \in \mathscr{T}.
\end{equation*}
Since $\mathscr{T}$ satisfies (H1), we have $\|\operatorname{P}_h^{\gamma}\tilde{p}
-\tilde{p}\|_{0,2,\Omega_\rho}
\lesssim h^{\gamma+1} |\tilde{p}|_{\gamma+1,2,\Omega_\rho}$ (see \cite{Wang2013}). In addition,
by the H\"{o}lder inequality,
\begin{equation*}
\Big|\int_{\Omega_\rho}(\tilde{p}-\operatorname{P}_h^{\gamma} \tilde{p})
(\det\nabla\bm{u}_h-1) \,\mathrm{d}\bm{x}\Big|
\lesssim  \|\tilde{p}-\operatorname{P}_h^{\gamma}\tilde{p}\|_{0,2,\Omega_\rho}
\|\det\nabla\bm{u}_h-1\|_{0,2,\Omega_\rho}.
\end{equation*}
Thus, the first relationship in \eqref{theoretical error of FES energy} holds.
\end{proof}
\begin{rem}
$O(h^{\gamma+1})$ in the energy error bounds \eqref{theoretical error of FES energy} is not
optimal and can be improved at least to $o(h^{\gamma+1})$, since
$\|\det\nabla\bm{u}_h-1\|_{0,2,\Omega_\rho} \to 0$
(see \eqref{convergence of fes}).
\end{rem}

\begin{Theorem}
\label{Convergence of uh} Let $(\bm{\tilde{u}},\tilde{p}) \in (\mathcal{A}\cap \mathcal{U}(\Upsilon))\times H^1(\Omega_\rho)$ be a solution to problem \eqref{weak form of Euler Lagrange equation} with
$\bm{\tilde{u}}$ being an absolute minimizer of $E(\cdot)$ in $\mathcal{A}_I$.
Let $\mathscr{T}$ and $(\bm{u}_h, p_h)\in \mathcal{X}_h\times
\mathcal{M}_h$ satisfy the same conditions as in Lemma~\ref{lemma energy error}.
Then, there exist a subsequence $\{\bm{u}_h\}_{h>0}$ (not relabeled)
and an absolute energy minimizer $\bm{\bar{u}}$ of $E(\cdot)$ in $\mathcal{A}_I$, such that
\begin{equation}
\label{convergence of fes}
\bm{u}_h\to \bm{\bar{u}}\ \ \text{in}\ W^{1,s}(\Omega_{\rho};\mathbb{R}^2),\quad
\det\nabla\bm{u}_h \to 1 \ \ \text{in}\ L^2(\Omega_{\rho}), \quad \text{as}\ \ h\to 0.
\end{equation}
Furthermore, if $\|p_h\|_{0,2,\Omega_{\rho}} \lesssim 1$, then there exist a subsequence
$\{p_h\}_{h>0}$ (not relabeled) and a function $\bar{p} \in L^2(\Omega_{\rho})$,
such that $(\bm{\bar{u}}, \bar{p})$ solves
problem \eqref{mixed formulation} and
\begin{equation}
\label{weak convergence of p_h}
p_h \rightharpoonup \bar{p} \quad \text{in} \;\; L^2(\Omega_{\rho}),\quad \text{as} \ h\to0.
\end{equation}
\end{Theorem}

\begin{proof}
Since $1<s<2$, \eqref{u_h det u_h bounded} implies that
there exist a subsequence $\{\bm{u}_h\}_{h>0}$ (not relabeled) and functions
$\bm{\bar{u}}\in W^{1,s}(\Omega_{\rho};\mathbb{R}^2)$, $\vartheta\in L^2(\Omega_\rho)$
such that
\begin{equation}\label{u_h det u_h converge}
\bm{u}_h\rightharpoonup \bm{\bar{u}} \;\text{in} \; W^{1,s}(\Omega_{\rho};\mathbb{R}^2),
\;\;\; \bm{u}_h\to \bm{\bar{u}} \;\;\text{and} \; \det\nabla\bm{u}_h\rightharpoonup
\vartheta \; \text{in} \; L^2(\Omega_\rho),  \;\;
\text{as} \;\; h\to 0.
\end{equation}
Thanks to some prominent results for the cavitation problems (see Theorem 3 in \cite{Henao 2011},
Theorem 2 and Theorem 3 in \cite{Henao 2010}) that, in our case (see also in \cite{SuLi2018}
for more general cases), \eqref{u_h det u_h converge} together with the continuity of
$\bm{u}_h$ actually lead to
\begin{equation}
\label{theta=det Du}
\vartheta = \det\nabla\bm{\bar{u}}, \;\; \text{a.e. in} \;\;  \Omega_{\rho}, \;\;\text{and}
\;\; \bm{\bar{u}} \;\; \text{is 1-to-1 a.e. in} \;\; \Omega_{\rho}.
\end{equation}
In addition, due to $\int_{\Omega_{\rho}}q_h(\det\nabla \bm{u}_h-1) \,\mathrm{d}\bm{x}=0$, $\forall q_h\in \mathcal{M}_h$, we have
\begin{equation*}
\int_{\Omega_\rho}q(1-\vartheta)\, \mathrm{d}\bm{x}=
\int_{\Omega_\rho}q(\det\nabla\bm{u}_h-\vartheta)\,\mathrm{d}\bm{x} -
\int_{\Omega_\rho}(q-\operatorname{P}_h^0 q)(\det\nabla\bm{u}_h-1)\,\mathrm{d}\bm{x},
\;\forall q\in C_0^{\infty}(\Omega_\rho).
\end{equation*}
By the H\"{o}lder inequality, it follows from
$\|\operatorname{P}_h^{0}q-q\|_{0,2,\Omega_\rho}\lesssim h
|q|_{1,2,\Omega_\rho}$ and \eqref{u_h det u_h bounded} that
\begin{equation*}
\begin{aligned}
\Big|\int_{\Omega_\rho}(q-\operatorname{P}_h^0 q)(\det\nabla\bm{u}_h-1)\,\mathrm{d}\bm{x}
\Big|\le \| q-\operatorname{P}_h^0 q\|_{0,2,\Omega_\rho}
\| \det\nabla\bm{u}_h-1\|_{0,2,\Omega_\rho}\to 0,\ \text{as}\ h\to 0.
\end{aligned}
\end{equation*}
Hence, by \eqref{u_h det u_h converge}-\eqref{theta=det Du}, we have
$\det\nabla\bm{\bar{u}}=\vartheta=1$, a.e. in $\Omega_{\rho}$. Furthermore,
since $\bm{u}_h\to \bm{\bar{u}}$ in $L^s(\partial\Omega_\rho)$ and
$\bm{u}_h|_{\partial_D\Omega_\rho}=\bm{u}_0$, we also have
$\bm{\bar{u}}|_{\partial_D\Omega_\rho}=\bm{u}_0$. Thus, recalling that
$\bm{\bar{u}}$ is 1-to-1 a.e. in $\Omega_{\rho}$ by \eqref{theta=det Du},
we conclude that $\bm{\bar{u}}\in \mathcal{A}_I$.

Next, we claim that $\bm{\bar{u}}$ is an absolute energy minimizer of $E(\cdot)$ in $\mathcal{A}_I$.
In fact, due to the convexity of both $|\nabla\bm{u}|^s$ and $d(\xi)$,                                        as a consequence of
\eqref{u_h det u_h converge} and \eqref{theta=det Du}, we obtain
\begin{eqnarray}
\label{lsc of int u}
\int_{\Omega_\rho}|\nabla\bm{\bar{u}}|^s \,\mathrm{d}\bm{x}
\le\liminf_{h\to 0}\int_{\Omega_\rho}|\nabla\bm{u}_h|^s\,\mathrm{d}\bm{x},&
\\ \label{lsc of int d}
\int_{\Omega_\rho} d(\det\nabla\bm{\bar{u}})\,\mathrm{d}\bm{x}
\le\liminf_{h\to 0}\int_{\Omega_\rho}d(\det\nabla\bm{\bm{u}_h}) \,\mathrm{d}\bm{x}.&
\end{eqnarray}
Hence, by Lemma~\ref{lemma energy error} (see \eqref{theoretical error of FES energy}), we have
\begin{equation}
\label{bar u is minimizer}
\inf_{\bm{v}\in \mathcal{A}_I} E(\bm{v})\le E(\bm{\bar{u}})\le\liminf\limits_{h\to 0}E(\bm{u}_h)=
\liminf\limits_{h\to 0}E(\bm{u}_h,p_h)=E(\tilde{\bm{u}},\tilde{p})=
\inf_{\bm{v}\in \mathcal{A}_I} E(\bm{v}).
\end{equation}

Now, we are going to show the strong convergence of $\bm{u}_h$. Notice that
\eqref{bar u is minimizer} implies $E(\bm{\bar{u}})=\lim\limits_{h\to 0}E(\bm{u}_h)$,
by \eqref{lsc of int d}, we have
\begin{equation}
\label{d <= liminf d_h}
\begin{aligned}
&E(\bm{\bar{u}}) - \int_{\Omega_\rho}\mu|\nabla\bm{\bar{u}}|^s\, \mathrm{d}\bm{x} =
 \int_{\Omega_\rho}d(\det\nabla\bm{\bar{u}})\, \mathrm{d}\bm{x}
\le  \liminf_{h\to 0} \int_{\Omega_{\rho}}d(\det\nabla\bm{u}_h)\, \mathrm{d}\bm{x}\\
&\qquad \quad \ \ =
\liminf_{h\to 0} \big(E(\bm{u}_ h) - \int_{\Omega_\rho}\mu|\nabla\bm{u}_h|^s\, \mathrm{d}\bm{x}\big)= E(\bm{\bar{u}})
 -\limsup_{h\to 0}\int_{\Omega_\rho}\mu|\nabla\bm{u}_h|^s\, \mathrm{d}\bm{x},
\end{aligned}
\end{equation}
i.e. $\limsup\limits_{h\to0}\vert\bm{u}_h\vert_{1,s,\Omega_\rho}
\le \vert\bm{\bar{u}}\vert_{1,s,\Omega_\rho}$. This together with \eqref{lsc of int u}
yields $\lim\limits_{h\to 0}\vert \bm{u}_h\vert _{1,s,\Omega_\rho}=
\vert \bm{\bar{u}}\vert _{1,s,\Omega_\rho}$. Recall $W^{1,s}(\Omega_{\rho};\mathbb{R}^2)$
enjoys the Radon-Riesz property (see \cite{Megginson1998}), $\bm{u}_h\to\bm{\bar{u}}$ in
$W^{1,s}(\Omega_{\rho};\mathbb{R}^2)$ follows as a consequence of
$\bm{u}_h\rightharpoonup \bm{\bar{u}} \;\text{in} \; W^{1,s}(\Omega_{\rho};\mathbb{R}^2)$
and $\lim\limits_{h\to 0}\vert \bm{u}_h\vert _{1,s,\Omega_\rho}=
\vert \bm{\bar{u}}\vert _{1,s,\Omega_\rho}$.

In addition, by $\bm{u}_h \to \bar{\bm{u}}$ in $W^{1,s}(\Omega_\rho;\mathbb{R}^2)$,
the inequality in \eqref{d <= liminf d_h}
is actually an equality, hence we have $\lim\limits_{h\to 0}\int_{\Omega_\rho}
d(\det\nabla\bm{u}_h)\, \mathrm{d}\bm{x}=\int_{\Omega_\rho} d(\det\nabla\bm{\bar{u}})\,
\mathrm{d}\bm{x}$. Thus, it follows from $\det\nabla\bm{u}_h\rightharpoonup
\det\nabla\bm{\bar{u}}$ in $L^2(\Omega_\rho)$ and the convexity of $d_1(\cdot)$ that
\begin{equation*}
\begin{aligned}
\int_{\Omega_\rho}d(\det\nabla\bm{\bar{u}})& -\kappa(\det\nabla\bm{\bar{u}}-1)^2\,
\mathrm{d}\bm{x} = \int_{\Omega_\rho} d_1(\det\nabla\bm{\bar{u}})\, \mathrm{d}\bm{x}
\le \liminf_{h\to 0}\int_{\Omega_\rho} d_1(\det\nabla\bm{u}_h)\, \mathrm{d}\bm{x} \\
&\qquad \qquad \qquad \quad =\liminf_{h\to 0}
\int_{\Omega_\rho}d(\det\nabla\bm{u}_h)-\kappa(\det\nabla\bm{u}_h-1)^2\,
\mathrm{d}\bm{x} \\
&\qquad \qquad \qquad \quad =\int_{\Omega_\rho}d(\det\nabla\bm{\bar{u}})\, \mathrm{d}\bm{x}-
\limsup_{h\to 0}\int_{\Omega_\rho}\kappa(\det\nabla\bm{u}_h-1)^2\, \mathrm{d}\bm{x},
\end{aligned}
\end{equation*}
i.e., $\limsup\limits_{h\to 0}\int_{\Omega_\rho}(\det\nabla\bm{u}_h-1)^2\,
\mathrm{d}\bm{x}\le\int_{\Omega_\rho}(\det\nabla\bm{\bar{u}}-1)^2\, \mathrm{d}\bm{x}=0$,
which means $\det\nabla\bm{u}_h \to 1$ in $L^2(\Omega_\rho)$ as $h\to 0$.

Finally, $\|p_h\|_{0,2,\Omega_{\rho}} \lesssim 1$ implies that there exist a subsequence
$\{p_h\}_{h>0}$ (not relabeled) and a function $\bar{p} \in L^2(\Omega_{\rho})$,
such that \eqref{weak convergence of p_h} holds. Thus, by $\det\nabla\bm{u}_h \to
\det \nabla \bm{\bar{u}}=1$ in $L^2(\Omega_\rho)$ and \eqref{bar u is minimizer},
we have $E(\bm{\bar{u}},\bar{p})=\lim\limits_{h\to 0}E(\bm{u}_h,p_h)=E(\bm{\tilde{u}},\tilde{p})$,
which completes the proof of the theorem.
\end{proof}

\begin{Theorem}
\label{Convergence of ph} Let $(\bm{\tilde{u}},\tilde{p})
\in (\mathcal{A}\cap \mathcal{U}(\Upsilon))\times H^1(\Omega_\rho)$ be a solution to problem \eqref{weak form of Euler Lagrange equation} with
$\bm{\tilde{u}}$ being an absolute minimizer of $E(\cdot)$ in $\mathcal{A}_I$.
Let $\mathscr{T}$ and $(\bm{u}_h, p_h)\in \mathcal{X}_h\times
\mathcal{M}_h$ satisfy the same conditions as in Lemma~\ref{lemma energy error}.
Let $(\bm{\bar{u}},\bar{p})$ be given by Theorem~\ref{Convergence of uh}.
If, in addition, $\bar{p}\in H^1(\Omega_{\rho})$, $\|p_h\|_{0,2,\Omega_{\rho}} \lesssim 1$,
$\|\bm{u}_h\|_{1,\zeta,\Omega_\rho} \lesssim 1$ and $c\le\det \nabla \bm{u}_h \le C$,
a.e. in $\Omega_{\rho}$, where $\zeta>2$ and $0<c<1<C$ are constants independent of $h$.
Then
\begin{equation}
\label{strong convergence of ph}
p_h\rightarrow \bar{p}\;\; \text{in}\;\; L^2(\Omega_\rho),\quad \text{as}\quad h\to 0.
\end{equation}
\end{Theorem}
\begin{proof}
Firstly, we see that $\|\bm{u}_h\|_{1,\zeta,\Omega_\rho} \lesssim 1$
implies $\bm{\bar{u}}\in W^{1,\zeta}(\Omega_{\rho};\mathbb{R}^2)$, and thus it follows from (see the interpolation inequality on page 125 in \cite{Nirenberg1959})
\begin{equation}
\begin{aligned}
\|\nabla\bm{u}_h-\nabla\bar{\bm{u}}\|_{0,2,\Omega_\rho}\le \|\nabla\bm{u}_h-\nabla\bar{\bm{u}}\|_{0,\zeta,\Omega_\rho}^{1-\alpha}
\|\nabla\bm{u}_h-\nabla\bar{\bm{u}}\|_{0,s,\Omega_\rho}^{\alpha},
\end{aligned}
\end{equation}
where $0<\alpha<1$ is determined by $\frac{1}{2}=\frac{1-\alpha}{\zeta}+\frac{\alpha}{s}$,
that $\bm{u}_h\to \bm{\bar{u}}$ in $W^{1,s}(\Omega_{\rho};\mathbb{R}^2)$ in
\eqref{convergence of fes} can be
strengthened to $\bm{u}_h\to \bm{\bar{u}}$ in $H^1(\Omega_{\rho})$.

We will frequently use below the facts that $|A:B|\le |A||B|$, $\forall A, B \in M^{n \times n}$,
and $|\nabla\bm{u}_h| \ge \det \nabla \bm{u}_h \ge c$
a.e. in $\Omega_{\rho}$, $|\nabla\bar{\bm{u}}| \ge \det \nabla \bm{u} = 1 >c$ a.e. in $\Omega_{\rho}$.

Secondly, we are going to show that
\begin{equation}
\label{limit2}
\lim_{h\to 0} \sup_{\bm{v}_h\in\mathcal{X}_h}
\frac{\int_{\Omega_\rho}\mu s \big(|\nabla\bar{\bm{u}}|^{s-2}\nabla\bar{\bm{u}}
- |\nabla\bm{u}_h|^{s-2}\nabla\bm{u}_h\big) :\nabla\bm{v}_h\, \mathrm{d}\bm{x}}
{|\bm{v}_h|_{1,2,\Omega_\rho}}=0,
\end{equation}
\begin{equation}
\label{limit3}
\lim_{h\to 0}\sup_{\bm{v}_h\in\mathcal{X}_h}\frac{
\int_{\Omega_\rho}\big(d'(\det\nabla\bar{\bm{u}})\operatorname{cof}\nabla\bar{\bm{u}}
- d'(\det\nabla\bm{u}_h)\operatorname{cof}\nabla\bm{u}_h\big) :\nabla\bm{v}_h\, \mathrm{d}\bm{x}}
{|\bm{v}_h|_{1,2,\Omega_\rho}}=0.
\end{equation}

In fact, as $\bm{u}_h\to \bm{\bar{u}}$ in $H^1(\Omega_{\rho};\mathbb{R}^2)$,
by the H\"{o}lder inequality, \eqref{limit2} follows from
\begin{equation*}
\Big|\int_{\Omega_\rho}|\nabla\bar{\bm{u}}|^{s-2}
(\nabla\bar{\bm{u}}-\nabla\bm{u}_h):\nabla\bm{v}_h\,\mathrm{d}\bm{x}\Big|
\le
c^{s-2}\|\nabla\bar{\bm{u}}-\nabla\bm{u}_h\|_{0,2,\Omega_\rho}
\|\nabla\bm{v}_h\|_{0,2,\Omega_\rho},
\end{equation*}
and
\begin{equation*}
\begin{aligned}
&  \Big|\int_{\Omega_\rho}(|\nabla\bar{\bm{u}}|^{s-2}-|\nabla\bm{u}_h|^{s-2})
   (\nabla\bm{u}_h:\nabla\bm{v}_h)\,\mathrm{d}\bm{x}\Big|\\
= & \Big|\int_{\Omega_\rho}(s-2)|\nabla\bm{u}_\eta|^{s-4}\big(\nabla\bm{u}_\eta:
(\nabla\bar{\bm{u}} -\nabla\bm{u}_h)\big)(\nabla\bm{u}_h:\nabla\bm{v}_h)\,\mathrm{d}\bm{x}\Big|\\
\le & c^{s-4}(s-2)\|\nabla\bar{\bm{u}}-\nabla\bm{u}_h\|_{0,2,\Omega_\rho}
    \|\nabla\bm{v}_h\|_{0,2,\Omega_\rho},
\end{aligned}
\end{equation*}
where $\nabla\bm{u}_\eta:=\nabla\bar{\bm{u}}+\eta(\nabla\bm{u}_h-\nabla\bar{\bm{u}})$
with $\eta\in(0,1)$.

Similarly, as $\bm{u}_h\to \bm{\bar{u}}$ in $H^1(\Omega_{\rho})$ and
$\det\nabla\bm{u}_h \to\det \nabla \bm{\bar{u}}=1$ in $L^2(\Omega_\rho)$,
by the H\"{o}lder inequality,
\eqref{limit3} follows as a consequence of
\begin{equation*}
\Big|\int_{\Omega_\rho}d'(\det\nabla\bar{\bm{u}}) (\operatorname{cof}
\nabla\bar{\bm{u}}- \operatorname{cof}\nabla\bm{u}_h):\nabla\bm{v}_h\,
\mathrm{d}\bm{x}\Big|
\le |d'(1)|
\|\nabla\bm{u}_h-\nabla\bar{\bm{u}}\|_{0,2,\Omega_\rho}
\|\nabla\bm{v}_h\|_{0,2,\Omega_\rho},
\end{equation*}
and
\begin{equation*}
\begin{aligned}
&\Big|\int_{\Omega_\rho}(d'(\det\nabla\bar{\bm{u}})-d'(\det\nabla\bm{u}_h))
\operatorname{cof}\nabla\bm{u}_h:\nabla\bm{v}_h\,\mathrm{d}\bm{x}\Big| \\
&\qquad \qquad \qquad \qquad \le \|d''(\eta)\|_{0,\infty,\Omega_\rho} \|\det\nabla\bar{\bm{u}}-
\det\nabla\bm{u}_h \|_{0,2,\Omega_\rho}\|\nabla\bm{u}_h\|_{0,2,\Omega_\rho}
\|\nabla\bm{v}_h\|_{0,2,\Omega_\rho},
\end{aligned}
\end{equation*}
where $\eta$ is between $\det\nabla\bm{u}_h$ and $\det\nabla\bar{\bm{u}}$, hence
$\|d''(\eta)\|_{0,\infty,\Omega_\rho}  \le
\displaystyle\max_{c\le \xi \le C} d''(\xi) |\Omega_\rho|$.

Next, we claim that
\begin{equation}
\label{P0hp-p_h->0}
\lim_{h\to 0}\sup_{\bm{v}_h\in\mathcal{X}_h}\frac{
\int_{\Omega_\rho}(\operatorname{P}_h^0\bar{p}-p_h)
\operatorname{cof}\nabla\bm{u}_h:\nabla\bm{v}_h\,\mathrm{d}\bm{x}}
{|\bm{v}_h|_{1,2,\Omega_\rho}}= 0,
\end{equation}
where $\operatorname{P}_h^0:H^1(\Omega_{\rho}) \to \mathcal{M}_h$ is defined in the proof of Lemma~\ref{lemma energy error}. In fact,
\begin{multline*}
\label{limit44}
\int_{\Omega_\rho}(\operatorname{P}_h^0\bar{p}-p_h)\operatorname{cof}\nabla\bm{u}_h:
\nabla\bm{v}_h\, \mathrm{d}\bm{x} =
\int_{\Omega_\rho}\big(\bar{p}\operatorname{cof}\nabla\bar{\bm{u}} -
p_h\operatorname{cof}\nabla\bm{u}_h\big):\nabla\bm{v}_h\,\mathrm{d}\bm{x} \\
-\int_{\Omega_\rho}\bar{p}(\operatorname{cof}\nabla\bar{\bm{u}} -
\operatorname{cof}\nabla\bm{u}_h):\nabla\bm{v}_h \, \mathrm{d}\bm{x} -
\int_{\Omega_\rho}(\bar{p}-\operatorname{P}_h^0\bar{p})
\operatorname{cof}\nabla\bm{u}_h:\nabla\bm{v}_h\,\mathrm{d}\bm{x} = I_1+I_2+I_3.
\end{multline*}
By \eqref{weak form of Euler Lagrange equation}, \eqref{discrete Euler Lagrange equation},
\eqref{limit2} and \eqref{limit3}, we have
$\lim_{h\to 0}\sup_{\bm{v}_h\in\mathcal{X}_h}\frac{|I_1|}{|\bm{v}_h|_{1,2,\Omega_\rho}} =0$. In addition,
\begin{equation*}
|I_2| \lesssim \|\bar{p}\|_{0,\infty,\Omega_\rho}
|\bar{\bm{u}}-\bm{u}_h|_{1,2,\Omega_\rho}|\bm{v}_h|_{1,2,\Omega_\rho},
\quad \forall \bm{v}_h\in\mathcal{X}_h,
\end{equation*}
\begin{equation*}
|I_3|\lesssim  \|\bar{p}-\operatorname{P}_h^0\bar{p}\|_{0,\eta,\Omega_\rho}
|\bm{u}_h|_{1,\zeta,\Omega_\rho}|\bm{v}_h|_{1,2,\Omega_\rho},
\quad \forall \bm{v}_h\in\mathcal{X}_h,
\end{equation*}
where $\eta=2\zeta/(\zeta-2)$. Since $\bar{p}\in H^1(\Omega_{\rho})$ implies that $\lim_{h\to 0}
\|\bar{p}-\operatorname{P}_h^0\bar{p}\|_{0,\xi,\Omega_\rho} =0$ and
$\|\bar{p}\|_{0,\infty,\Omega_\rho}<\infty$, we are led to $\lim_{h\to 0}
\sup_{\bm{v}_h\in\mathcal{X}_h}\frac{|I_2|+|I_3|}{|\bm{v}_h|_{1,2,\Omega_\rho}} =0$.

Finally, by the interpolation error estimate of $\operatorname{P}_h^0$ and
the LBB condition \eqref{LBB condition},
\begin{equation}
\label{p-p_h<=LBB}
\begin{aligned}
\|\bar{p}-p_h\|_{0,2,\Omega_\rho}\le& \|\bar{p}-\operatorname{P}_h^0\bar{p}\|_{0,2,\Omega_\rho}
+\|p_h-\operatorname{P}_h^0\bar{p}\|_{0,2,\Omega_\rho}\\
\lesssim & h|\bar{p}|_{1,2,\Omega_\rho}+ \sup_{\bm{v}_h\in\mathcal{X}_h}\frac{\int_{\Omega_\rho}
(p_h-\operatorname{P}_h^0\bar{p})\operatorname{cof}\nabla\bm{u}_h:\nabla\bm{v}_h
\,\mathrm{d}\bm{x}} {\|\bm{v}_h\|_{1,2,\Omega_\rho}}.
\end{aligned}
\end{equation}
Since $\bm{v}\in H_E^1(\Omega_\rho)$ and $\partial_D\Omega_\rho$ is not empty, we have
$|\bm{v}_h|_{1,2,\Omega_\rho} \cong \|\bm{v}_h\|_{1,2,\Omega_\rho}$ by
the Poincar\'{e}-Friedrichs inequality. Thus, \eqref{strong convergence of ph} follows as a consequence of
\eqref{P0hp-p_h->0} and \eqref{p-p_h<=LBB}.
\end{proof}

\section{Numerical experiments and results}

In this section, we present numerical results obtained by our method in solving the
incompressible nonlinear elasticity multi-cavity problem. In our numerical experiments,
the energy density is given by \eqref{energy density}
with $s=3/2$, $\mu=2/3$ and $d(\xi)=(\xi-1)^2/2+1/\xi$,
and the meshes $\mathscr{T}_h$ satisfy (H1), and in particular,
for properly given constants $C\ge (2-s)2^{s-1}$, $C_1$, $C_2>0$, and
$h\le \min\{\frac{2-s}{2^{2-s}C}, \frac{2-s}{2^{s-1}C} \}$, $\mathscr{T}'_h$ satisfy
additional requirements:
(1) Orientation preservation condition $\tau \le C_1\epsilon^{1/2}$, and
$N \ge C_2\epsilon^{-1/2}$ on the standard layer (see \cite{SuLiRectan}),
$N \ge C_2(\epsilon\tau)^{-1/4}$ on the conforming layer (see \cite{SuLi2018});
(2) Stability condition, which requires that (H2) as well as (H1) hold in each of the
Newton iterations (see \S~3);
(3) Quasi-optimal convergence rates condition,
which requires that, for a given constant $\alpha\in (0,1)$,
$N^{-1}\lesssim \epsilon^{(1-\alpha)/4} h$ and $\tau \lesssim \epsilon^{(1-\alpha)/4} h$
on the circular ring layers $B_{\epsilon+\tau}(\bm{x}_k)\setminus B_{\epsilon}(\bm{x}_k)$
(see Lemma~\ref{known results of Su}); (4) Sub-equi-distribution of the relative error
on the elastic energy, which requires that $(\epsilon+\tau)^{2-s} \le \epsilon^{2-s}+Ch$
(see \cite{SuLiRectan}).

\subsection{Single pre-existing defect case}

To demonstrate the numerical performance of our method and the meshing strategy on $\mathscr{T}'_h$,
we apply them to a typical single pre-existing defect cavitation problem.

Take $\Omega_{\rho}=B_1(\bm{0}) \setminus B_{\rho}(\bm{0})$ as the reference configuration,
with $\rho = 0.01$ and $0.0001$ respectively, and set $\delta = 1$. Let
$\partial_D\Omega_\rho = \partial B_1(\bm{0})$, and consider a non-radially-symmetric
Dirichlet boundary condition
$\bm{u}_0(\bm{x}) = (2.5\bm{x},2.0\bm{x})$, $\forall \bm{x}\in \partial_D\Omega_\rho$.

\begin{table}[H]{\footnotesize
\centering \subtable[$\rho=0.01$]{
\begin{tabular}{|c|c|c|c|c|}
\hline
$h$     & $\min\tau_T$  &  $\max\tau_T$  &  layers  & $N$    \\      \hline
0.06    & 0.0384        &  0.1824        &   8      &  14    \\      \hline
0.04    & 0.0224        &  0.1376        &   11     &  26    \\      \hline
0.03    & 0.0156        &  0.1164        &   14     &  34    \\      \hline
0.02    & 0.0096        &  0.0736        &   22     &  50    \\      \hline
\end{tabular}
}
\qquad
\subtable[$\rho=0.0001$]{
\begin{tabular}{|c|c|c|c|c|c|}
\hline
$h$     & $\min\tau_T$  &  $\max\tau_T$  & layers  & $\min N$  & $\max N$  \\      \hline
0.06    & 0.009         &  0.1813        &    9    &  11       &   44       \\       \hline
0.04    & 0.0080        &  0.1360        &   13    &  16       &   64        \\      \hline
0.03    & 0.0048        &  0.1056        &   17    &  21       &   84        \\      \hline
0.02    & 0.0024        &  0.0728        &   25    &  32       &   128        \\     \hline
\end{tabular}
}\caption{Data of typical meshes produced by the meshing strategy for $\mathscr{T}'_h$.}\label{table1}
}
\end{table}
Table~\ref{table1} shows two typical meshes produced by the meshing strategy for $\mathscr{T}'_h$ with
$C=2$, $C_1=2.0$, $C_2 = 2.0$, where the condition (3) does not actively take
effect on the mesh parameters. It is clearly seen that, for $\rho=0.0001$ there are two conforming
layers in the mesh, while for $\rho=0.01$ there is none.

\begin{figure}[H]
\centering \subfigure{
\label{Energy vs h 0.01}
    \includegraphics[width=2.7in, height=2in]{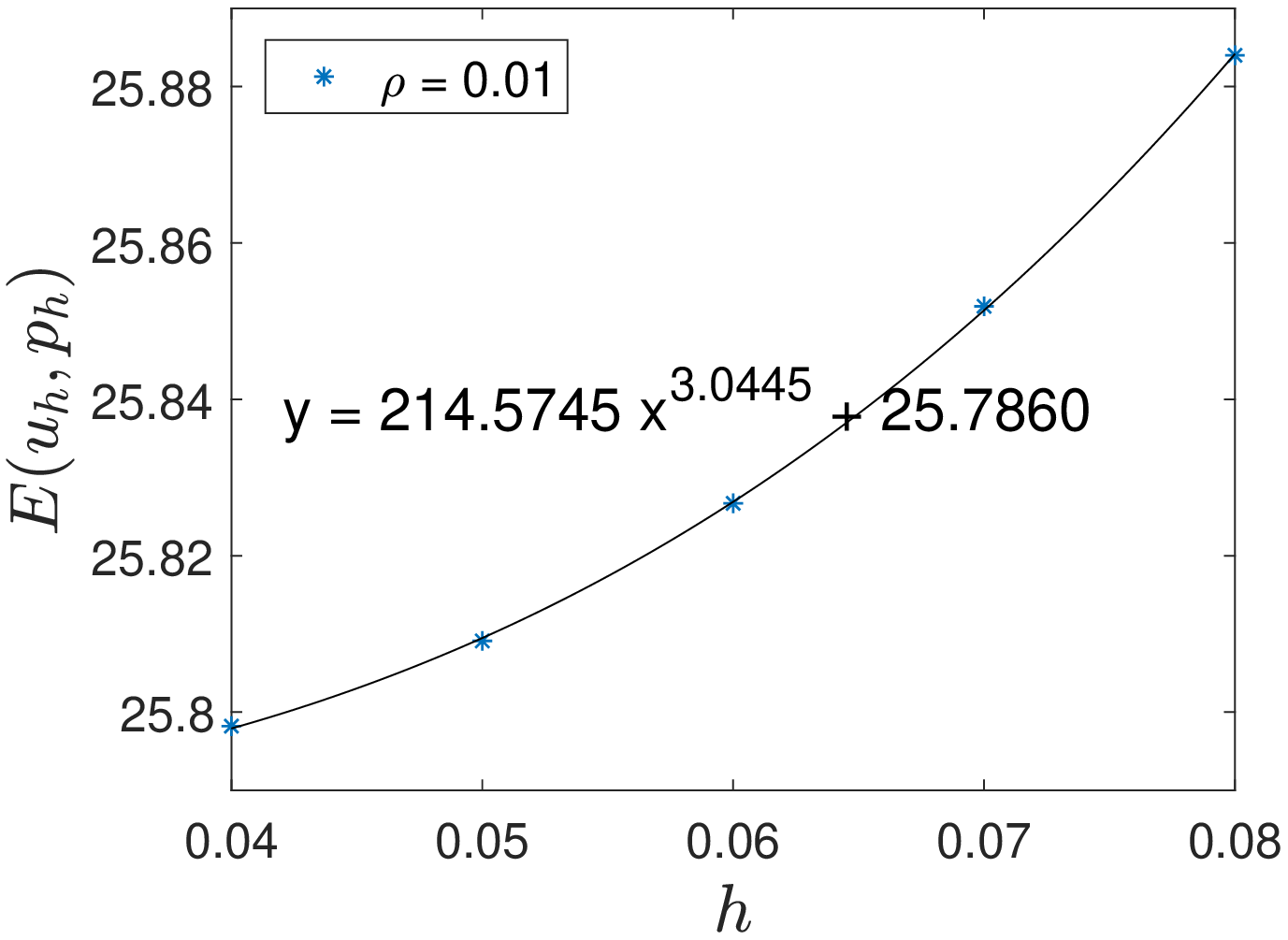}
} \hspace{1mm}
\centering \subfigure{
\label{Energy vs h 0.0001}
    \includegraphics[width=2.7in, height=2in]{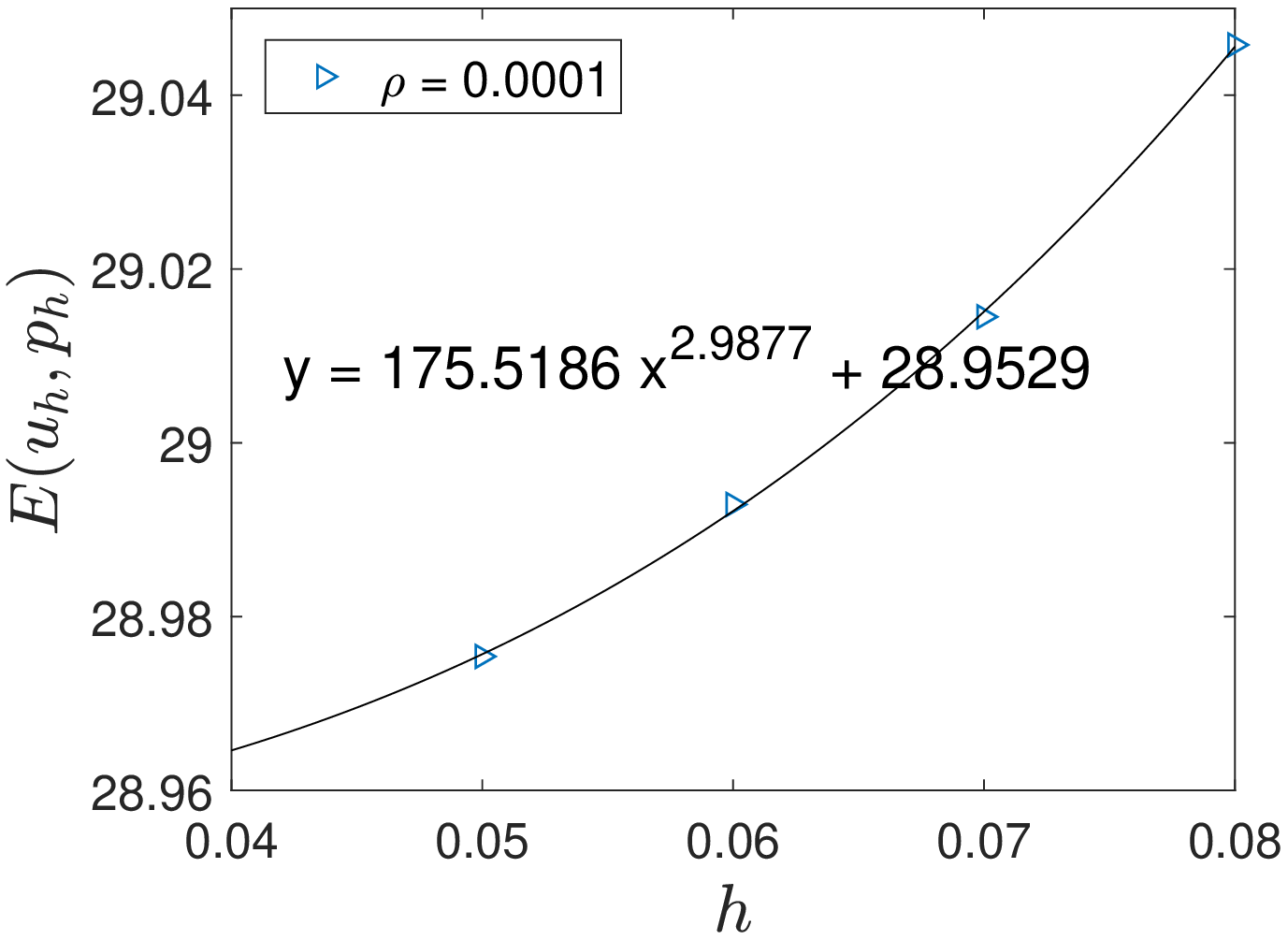}
} \vspace*{-5mm}
  \caption{Convergence behavior of the energy for $\rho=0.01$ and $0.0001$.}
\label{Energy error}
\end{figure}

\begin{figure}[H]
\centering \subfigure{
\label{uh W1s error nonradial}
    \includegraphics[width=2.75in, height=1.85in]{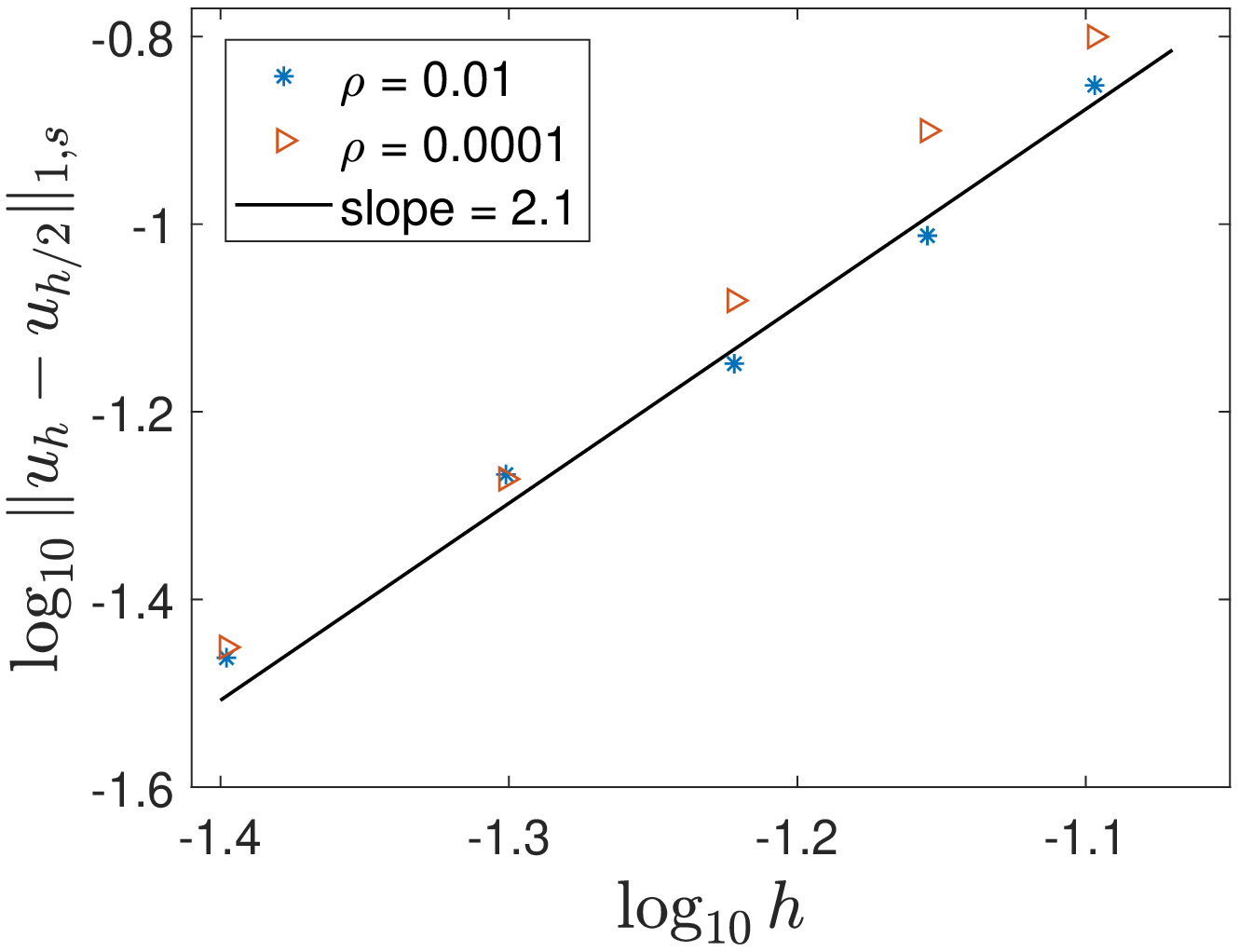}
} \hspace{1mm}
\centering \subfigure{
\label{ph L2 error nonradial}
    \includegraphics[width=2.7in, height=1.85in]{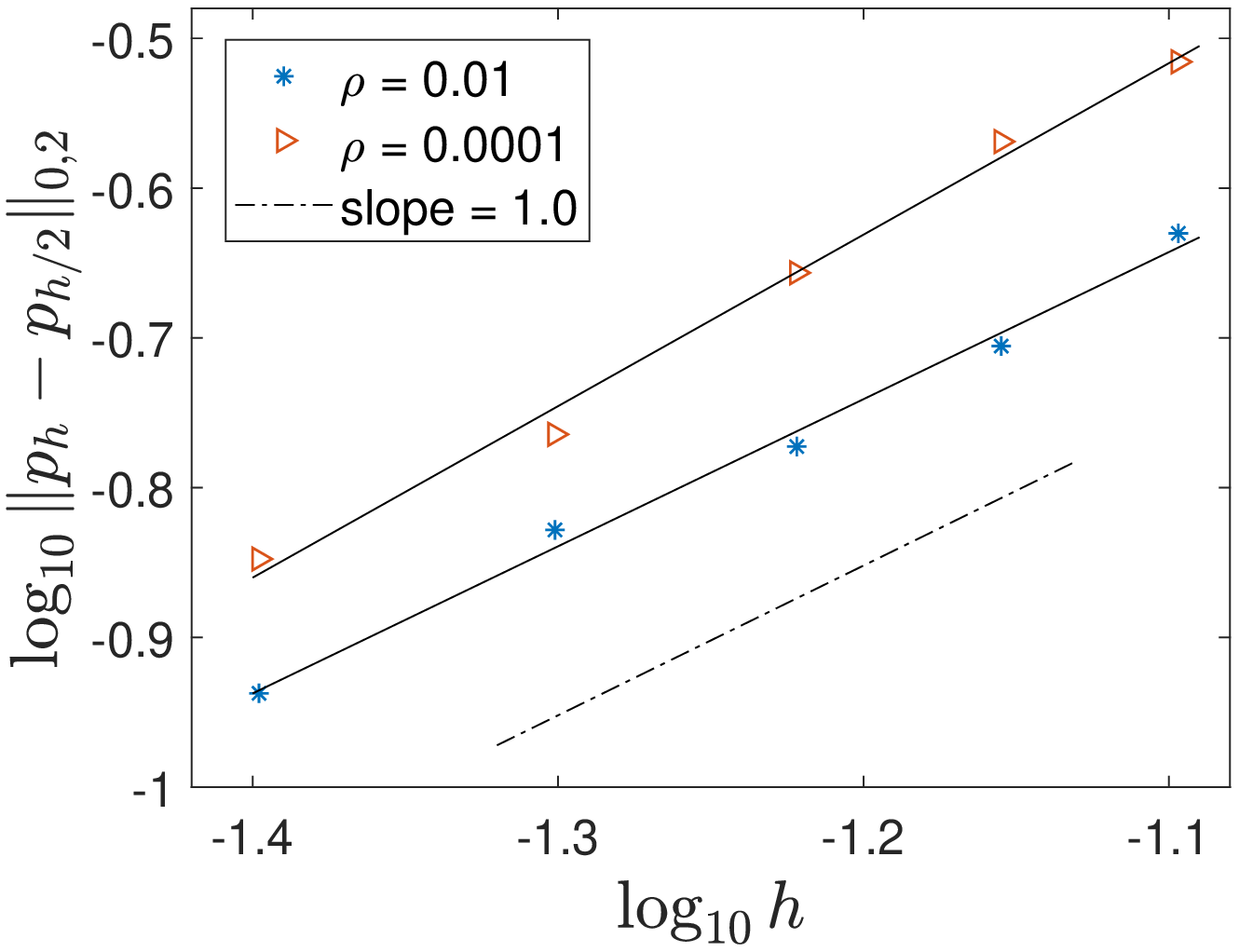}
} \vspace*{-5mm}
  \caption{Convergence behavior of $\nabla \bm{u}_h$ and $p_h$.}
\label{W1p error and Pressure error}
\end{figure}

\begin{figure}[H]
\centering \subfigure[$L^2$ error of $\det \nabla \bm{u}_h$.]{ \label{determinant L2 error nonradial}
    \includegraphics[width=2.7in, height=1.85in]{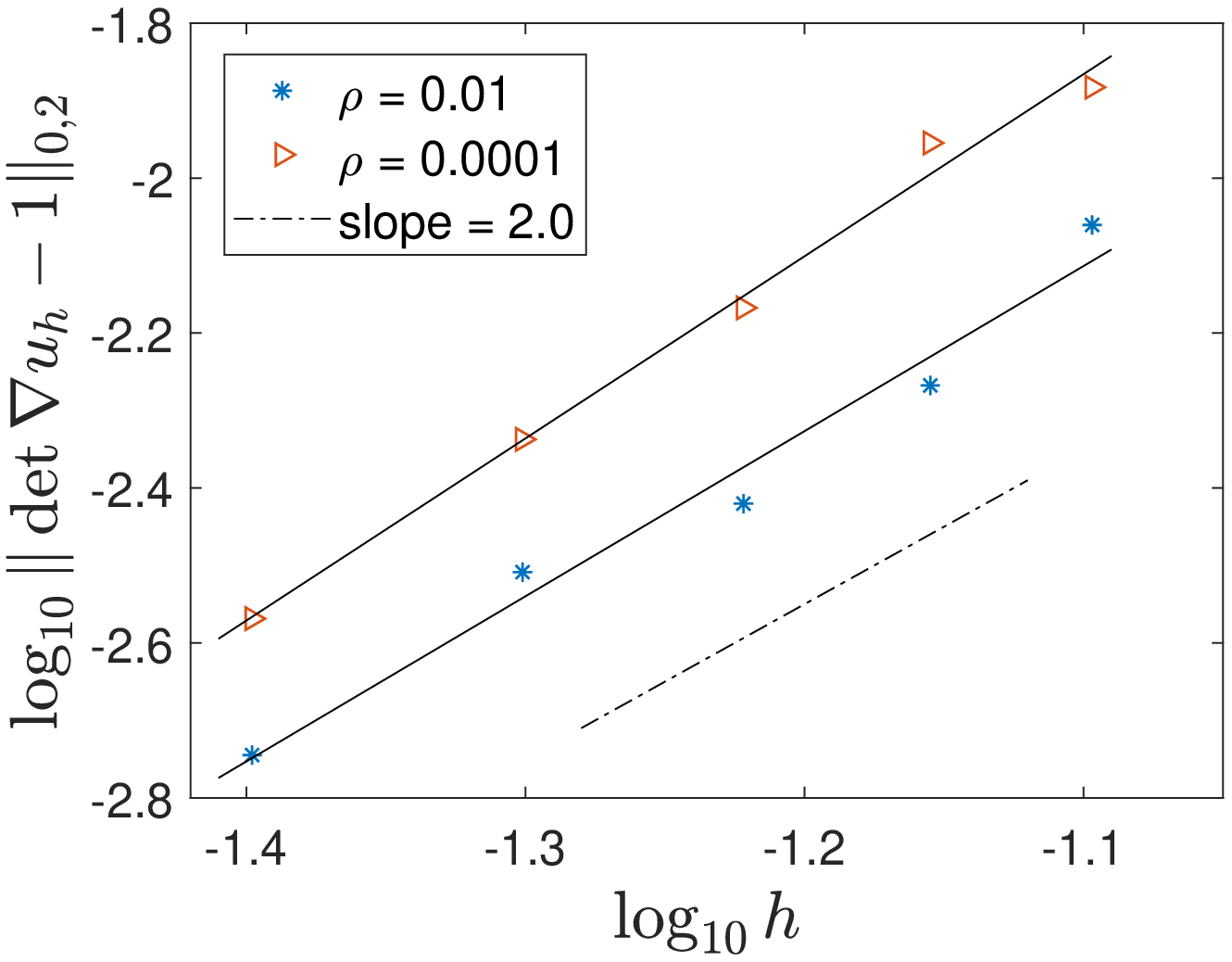}
} \hspace{1mm}
\centering \subfigure[$L^1$ error of $\det \nabla \bm{u}_h$.]{
\label{determinant L1 error nonradial}
    \includegraphics[width=2.7in, height=1.85in]{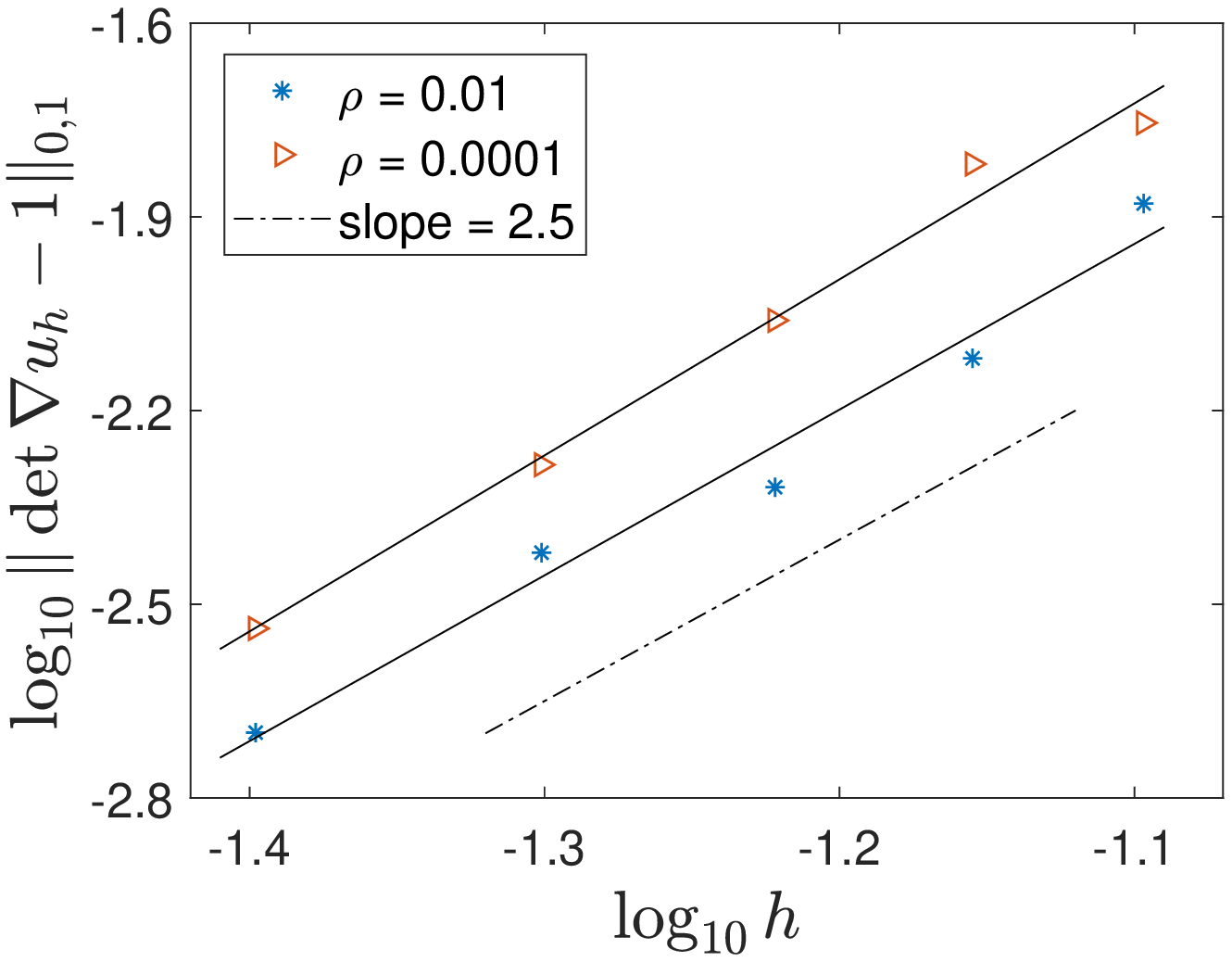}
}  \vspace*{-2mm}
\caption{Convergence behavior of $\det \nabla \bm{u}_h$.}
\label{convergence det}
\end{figure}

The convergence behavior of the numerical cavitation solutions is illustrated in
Fig~\ref{Energy error}-Fig~\ref{convergence det}, which clearly shown that the optimal
convergence rates are obtained.

\subsection{Two pre-existing defects case}

In this subsection, we consider a multi-cavity problem with 2 pre-existing defects.
Take $\Omega_{\rho}=B_1(\bm{0}) \setminus (B_{\rho_1}(\bm{x}_1)\cup B_{\rho_2}(\bm{x}_2))$
as the reference configuration, with $\rho_1=\rho_2 = 0.01$, $\bm{x_1} = (-0.2,0)$,
$\bm{x}_2 = (0.2,0)$, $\delta_1=\delta_2 = 0.15$. Let
$\partial_D\Omega_\rho = \partial B_1(\bm{0})$, and consider a Dirichlet boundary condition
$\bm{u}_0(\bm{x}) := \lambda \bm{x}$, $\forall \bm{x}\in \partial_D\Omega_\rho$,
with $1 < \lambda \le 1.4$.

Starting from an initial small deformation which is close to axisymmetric, then we are typically
led to an axisymmetric cavity solution as shown in Figure~\ref{middle}. On the other hand,
if we start from an initial deformation which is not close to axisymmetric, then we are
led to non-axisymmetric cavity solutions, in which either the left or the right cavity prevails
(see Figure~\ref{left} and Figure~\ref{right}). It is worth mentioning here that our
numerical solutions spontaneously realize the two energetically possible scenarios characterized
by Henao \& Serfaty in \cite{Henao 2013}, where it is proved that, in incompressible
nonlinear elasticity ($s=n$) multi-cavity problems, when the distance between the initial defects
are small compared with the radius of the grown cavity, either 1) cavities are pushed
together to form one equivalent round cavity (as $\rho \rightarrow 0$); or 2) all but
one cavity are of very small volume; while when the distance is much bigger, there is a third
scenario: 3) the cavities prefer to be spherical in shape and well separated.

\begin{figure}[H]
\centering \subfigure[Axisymmetric cavitation.]{
     \label{middle}
    \includegraphics[width=2.2in, height=1.54in]{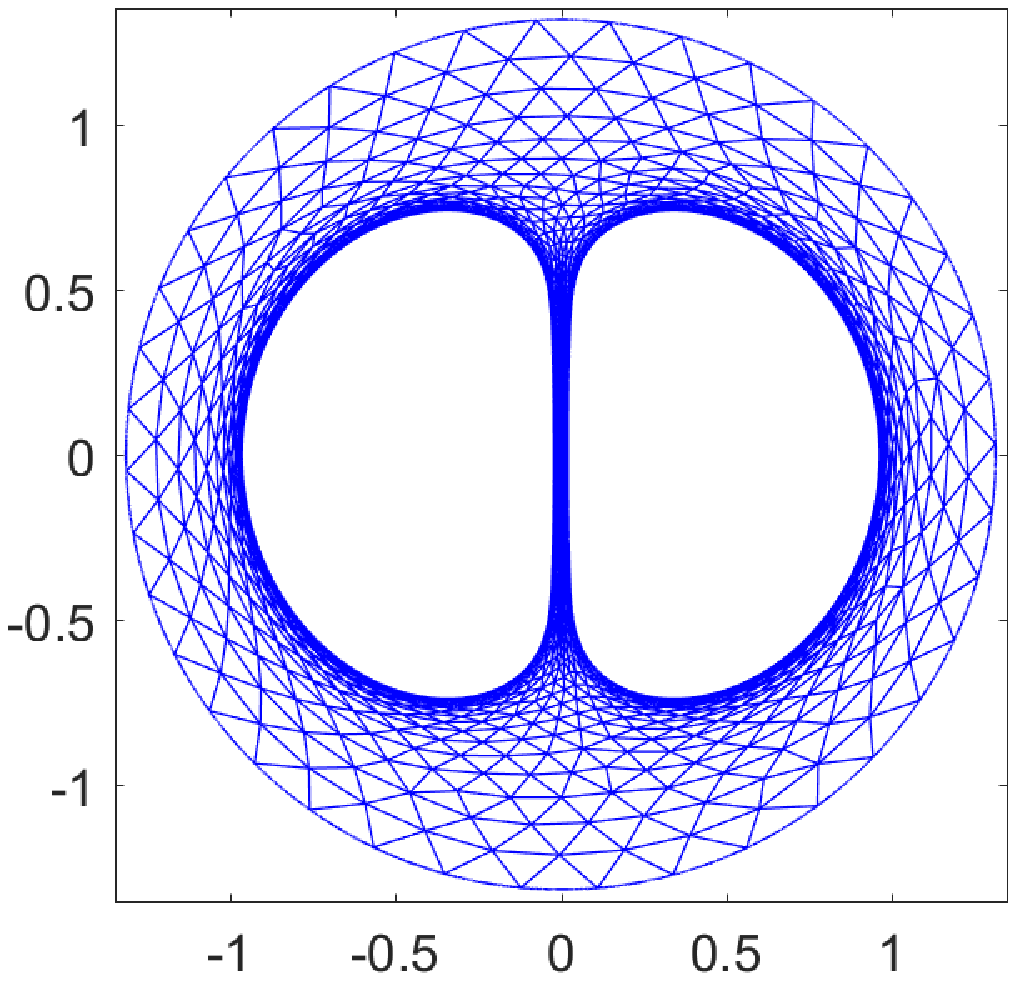}
} \hspace{-12mm}
\centering \subfigure[Left dominant.]{
\label{left}
    \includegraphics[width=2.2in, height=1.54in]{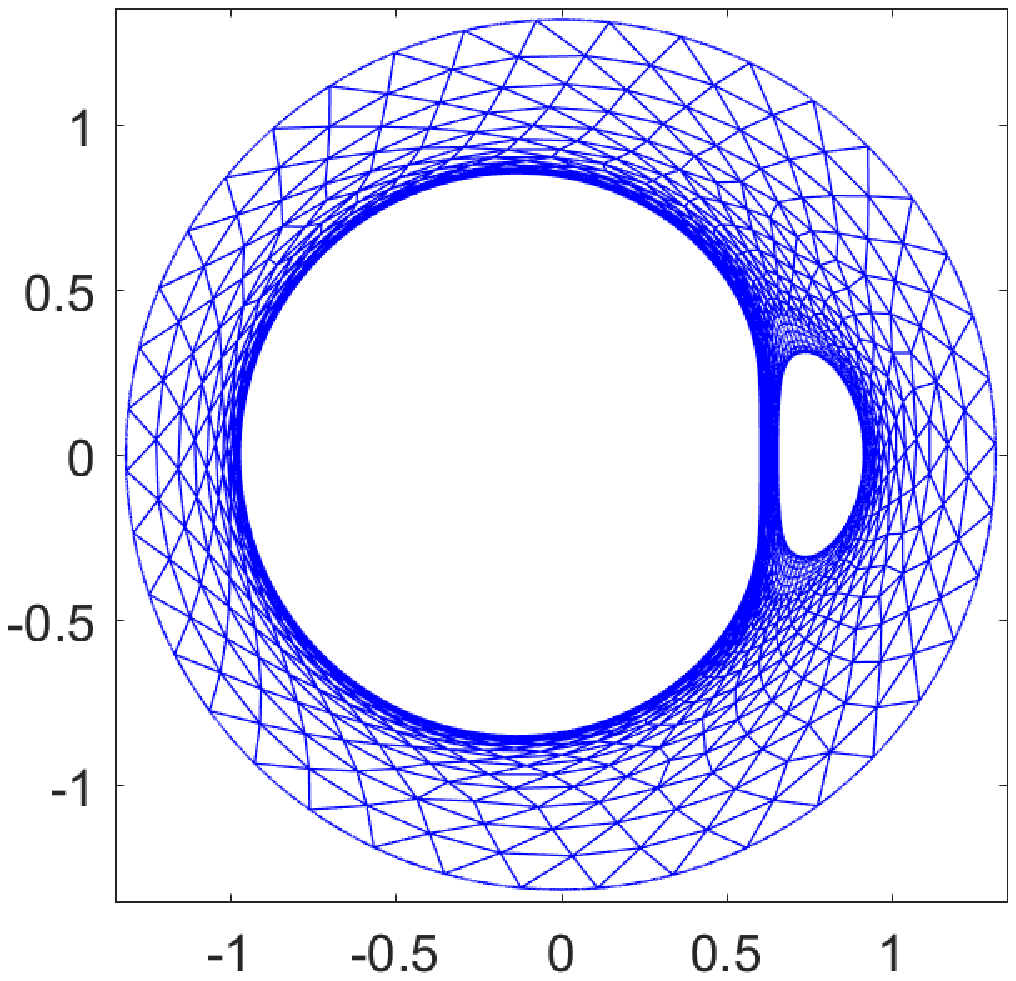}
} \hspace{-12mm}
\centering \subfigure[Right dominant.]{
\label{right}
    \includegraphics[width=2.2in, height=1.54in]{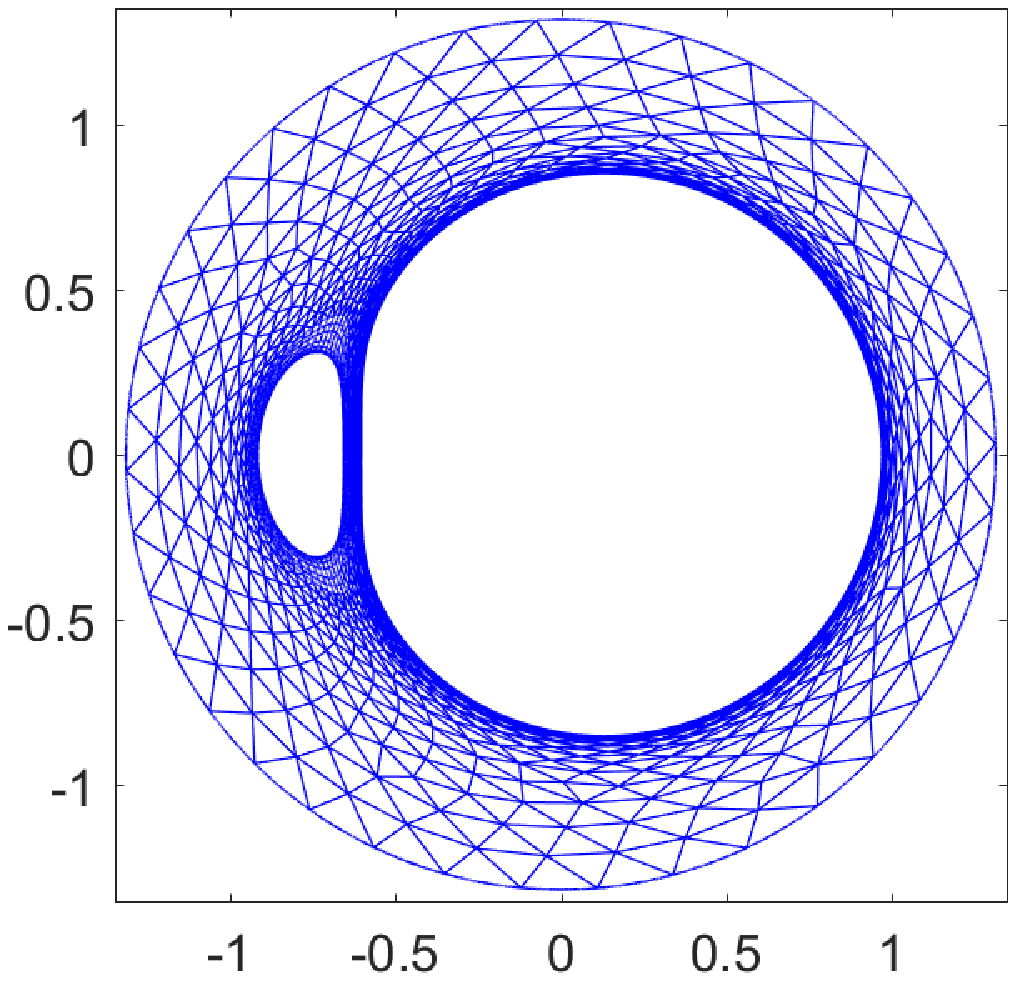}
}
  \caption{Three cavitation solutions corresponding to $\lambda=1.3$.}
\end{figure}

Our numerical experiments as well as the analytical results of Henao \& Serfaty
\cite{Henao 2013} suggest that, for a given multi-cavity problem, there should exist a
critical $\lambda_c>1$ such that, as $\lambda$ decreases across $\lambda_c$,
the multi-cavity solutions will experience a change from the scenarios 1 or 2 or both
to the scenario 3. Our numerical experiments actually captured the process of the change,
of which some snapshots are shown in Figure~\ref{middle evolution} and
Figure~\ref{right prevails evolution}, where it is obviously seen that, for $\lambda \gg 1.18$
we have cavity solutions of both scenarios 1 and 2 (see Figures~\ref{middle140} and
\ref{right140}), the difference of the two solutions narrows as
$\lambda$ deceases across $1.18$ (see Figures~\ref{middle118} and \ref{right118}),
and eventually become an indistinctive scenario 3 solution
(compare Figures~\ref{middle105} and \ref{right105}).

\begin{figure}[H]
\centering \subfigure[$\lambda = 1.4$.]{
     \label{middle140}
    \includegraphics[width=2.2in, height=1.54in]{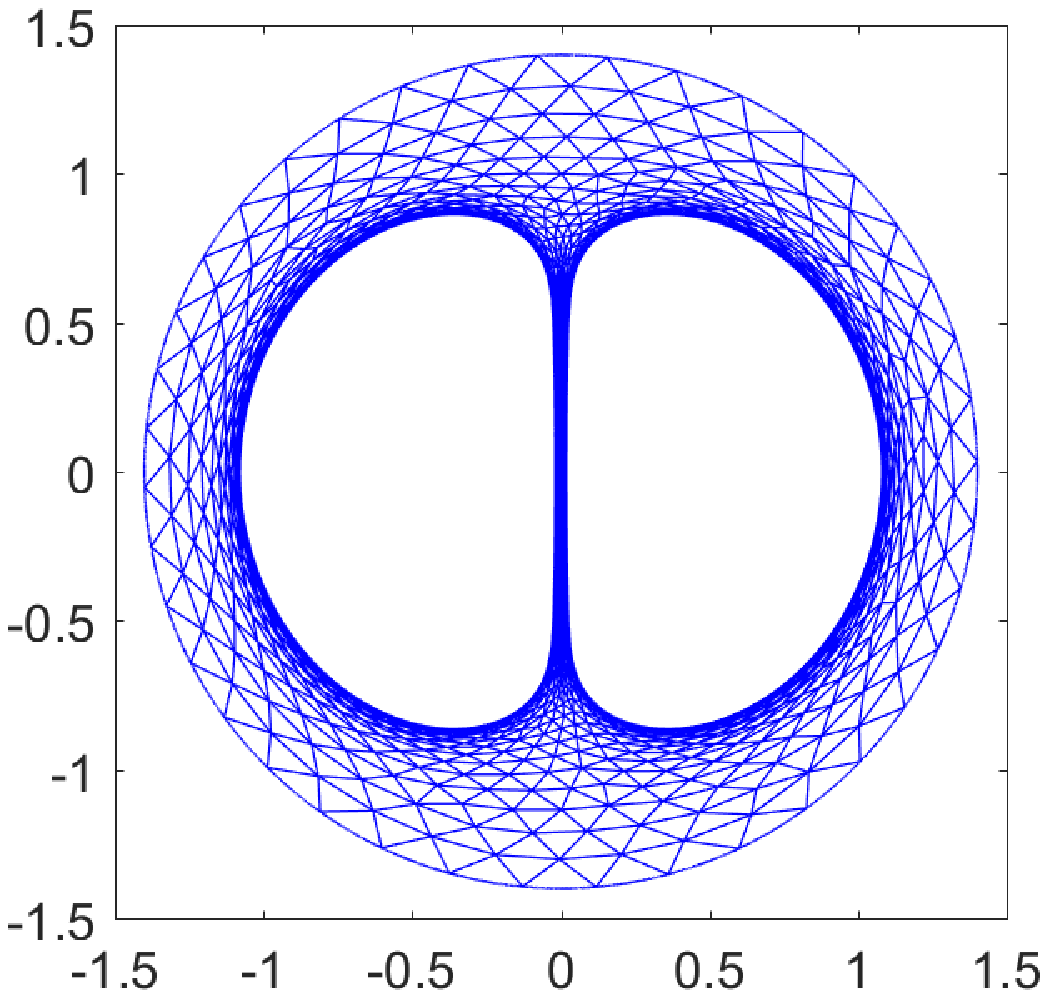}
} \hspace{-12mm}
\centering \subfigure[$\lambda = 1.18$.]{
\label{middle118}
    \includegraphics[width=2.2in, height=1.54in]{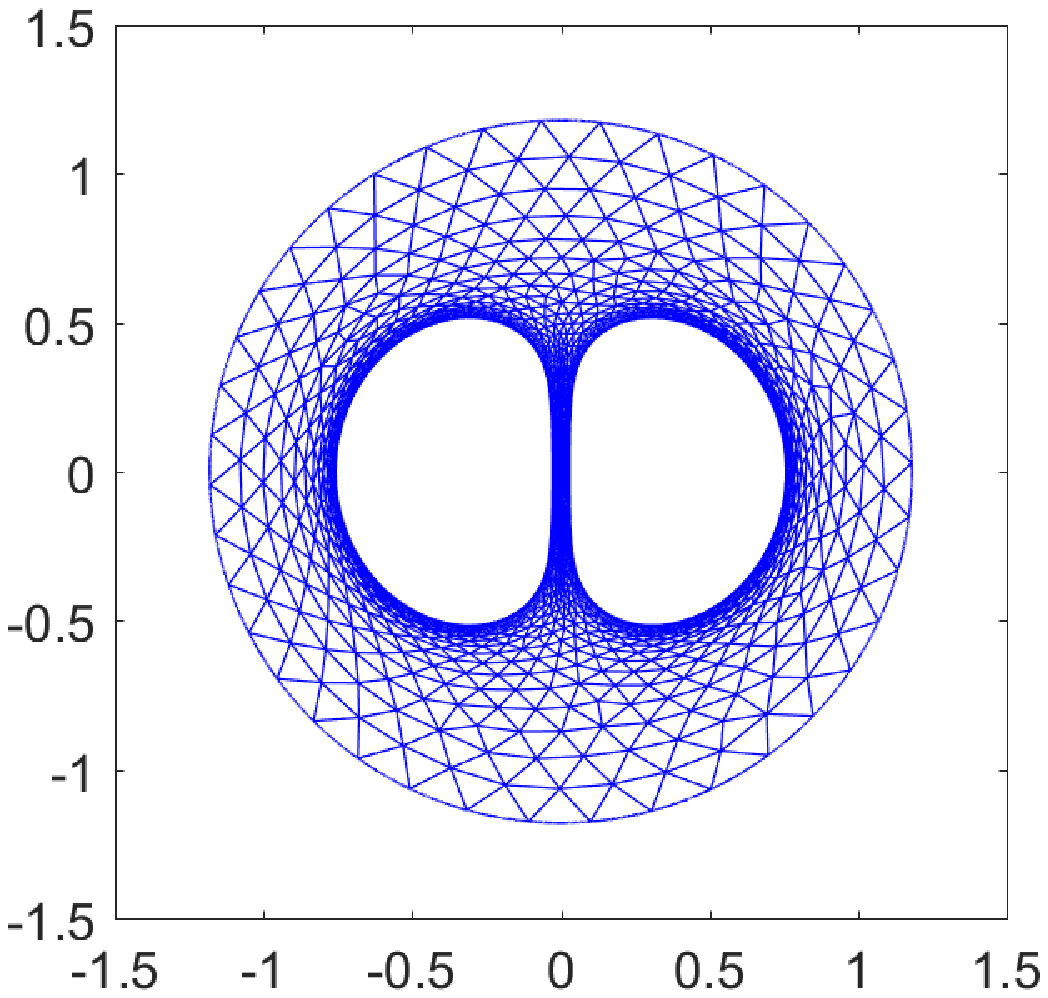}
} \hspace{-12mm}
\centering \subfigure[$\lambda = 1.05$.]{
\label{middle105}
    \includegraphics[width=2.2in, height=1.54in]{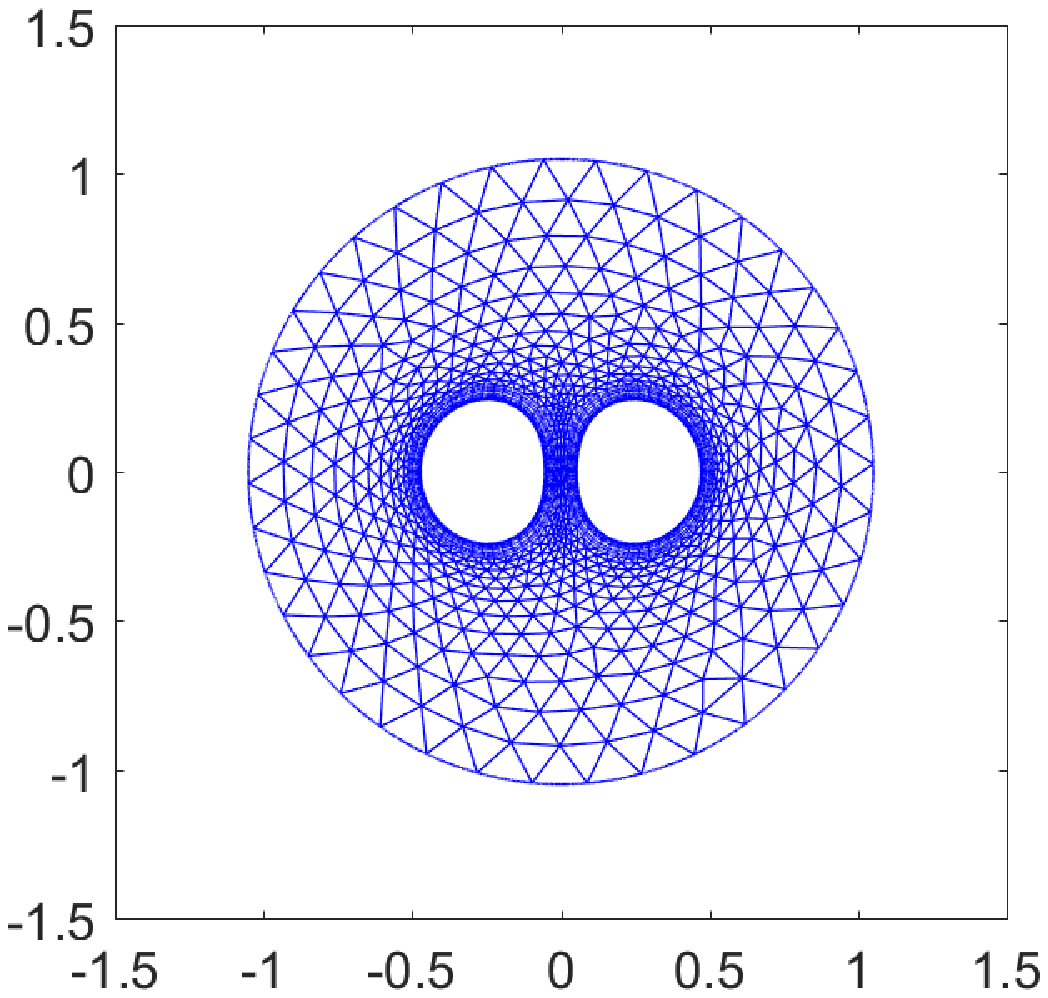}
} \vspace*{-2mm}
  \caption{Axisymmetric cavity solutions, $h = 0.02$.}
  \label{middle evolution}
\end{figure}

\begin{figure}[H]
\centering \subfigure[$\lambda = 1.4$.]{
     \label{right140}
    \includegraphics[width=2.2in, height=1.54in]{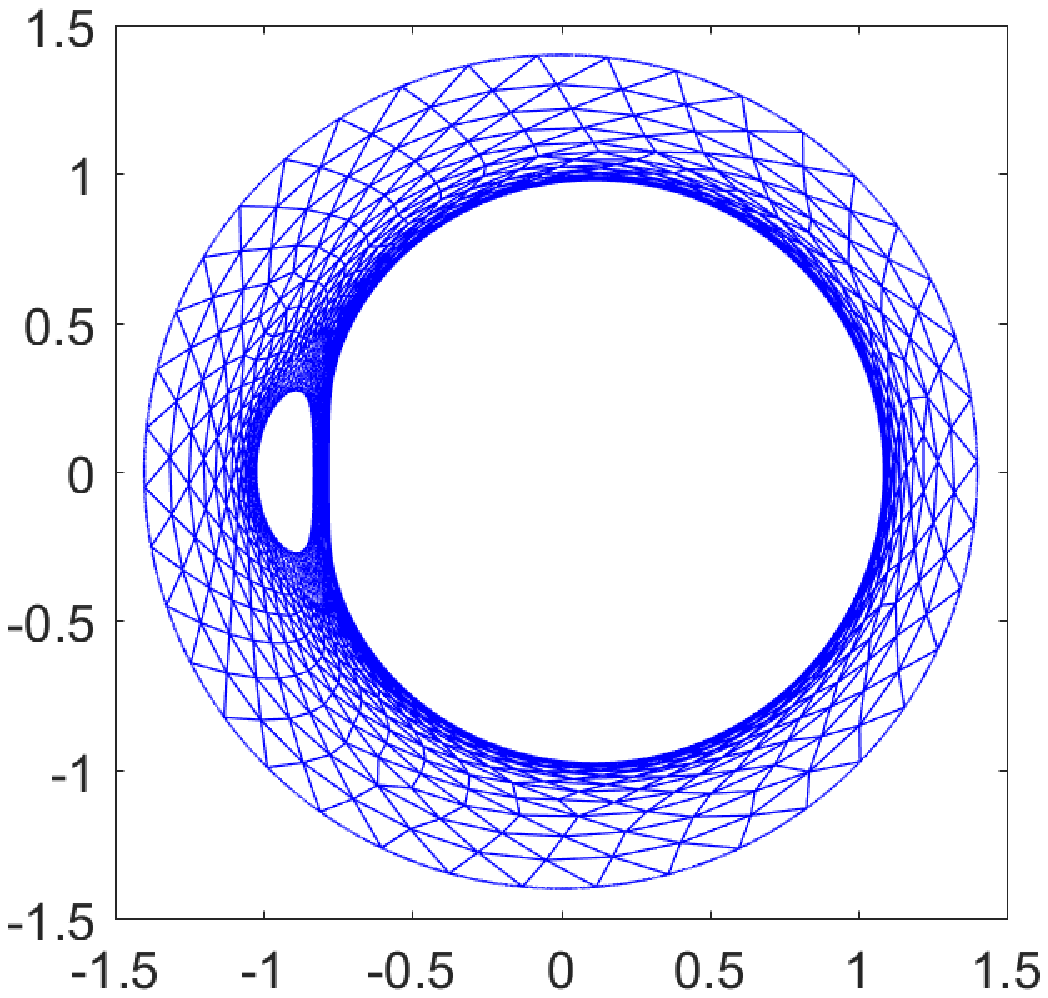}
} \hspace{-12mm}
\centering \subfigure[$\lambda = 1.18$.]{
\label{right118}
    \includegraphics[width=2.2in, height=1.54in]{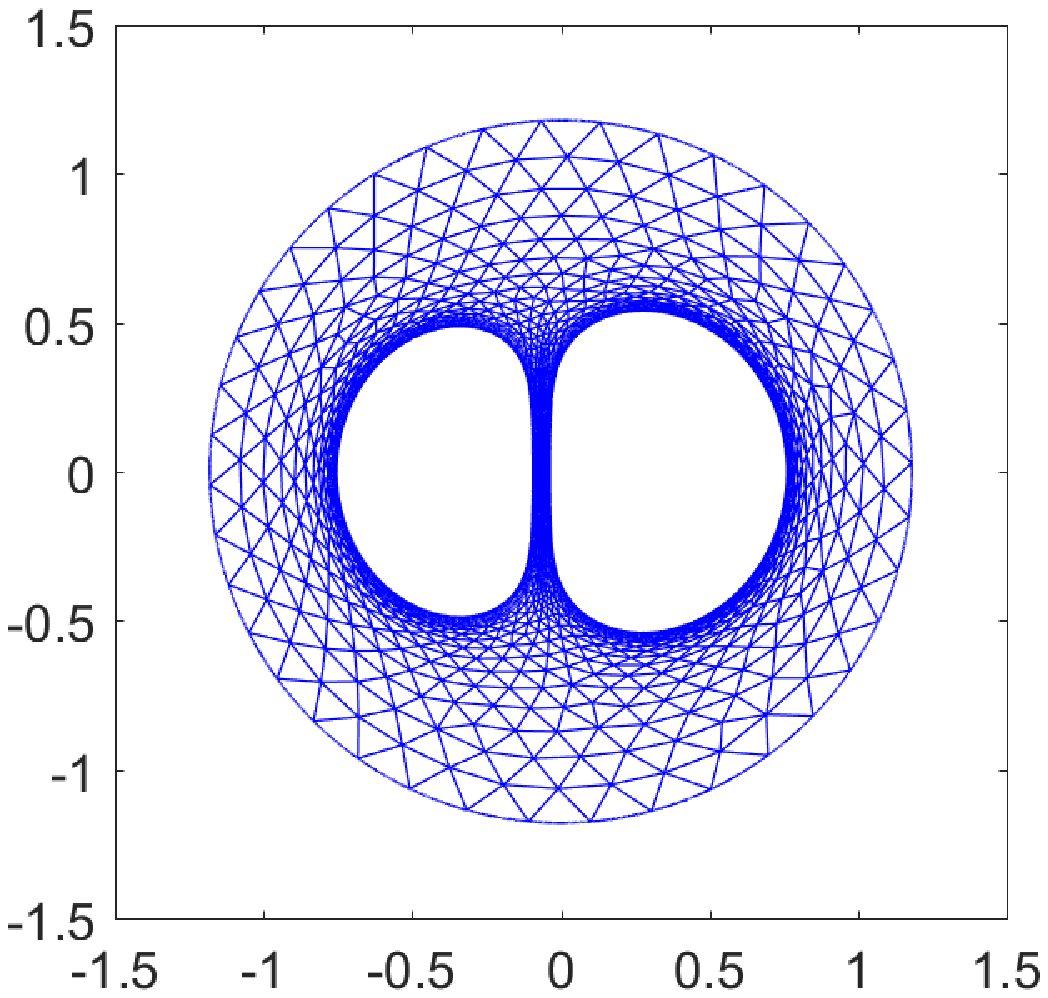}
} \hspace{-12mm}
\centering \subfigure[$\lambda = 1.05$.]{
\label{right105}
    \includegraphics[width=2.2in, height=1.54in]{lambda105.eps}
}
\vspace*{-2mm}
  \caption{The right dominant cavity solutions, $h = 0.02$.}
  \label{right prevails evolution}
\end{figure}

\begin{figure}[H]
\centering \subfigure[Energy difference.]{
\label{energy difference}
 \includegraphics[width=2.5in]{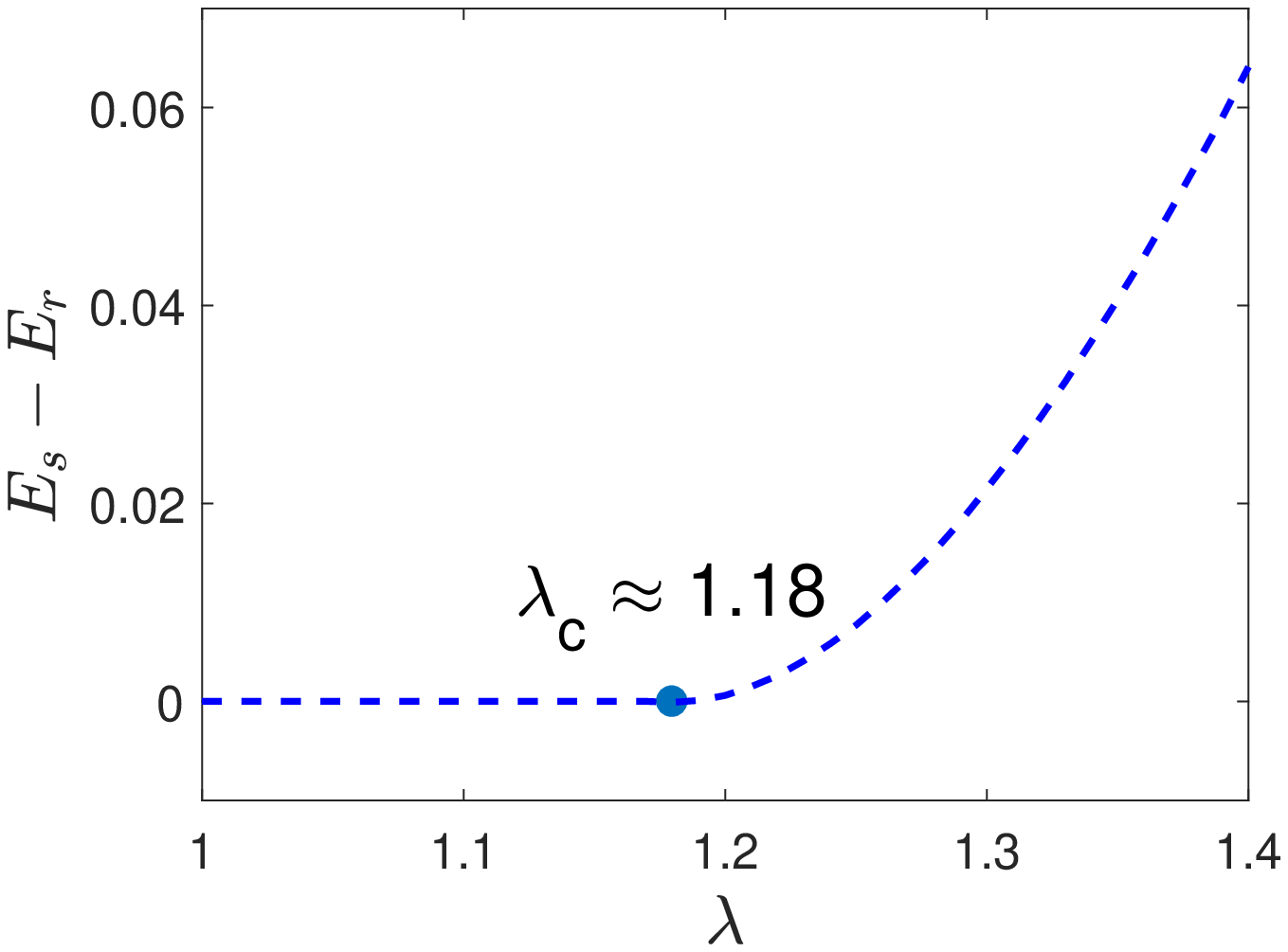}
}\hspace*{8mm}
\centering \subfigure[Volume ratio.]{
\label{volume ratio}
  \includegraphics[width=2.5in]{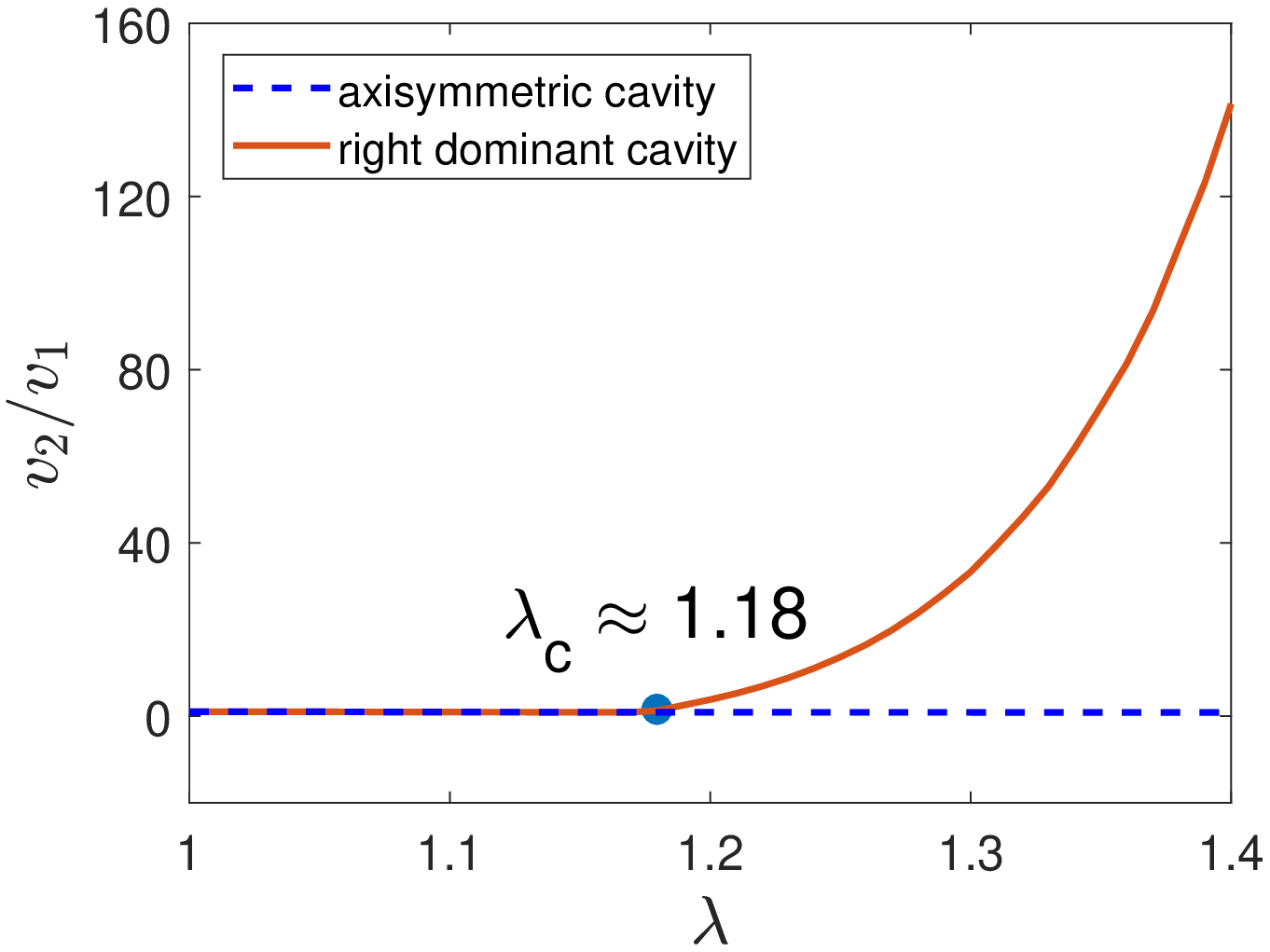}
}
\caption{For $\lambda>\lambda_c$, greater volume ratio cavity solution is energetically favorable.}
   \label{bifurcation}
\end{figure}

Let $E_s$ and $E_r$ denote the elastic energy of the axisymmetric and
right dominant numerical multi-cavity solutions respectively,
Figure~\ref{energy difference} shows $E_s-E_r$ as a function of $\lambda \in [1.0, 1.4]$,
where we see that, for $\lambda <1.18$, $E_s-E_r$ is essentially zero; while
for $\lambda > 1.18$, $E_s-E_r$ grows fast. Let $v_2$ and $v_1$ denote the volumes of the
right and left grown cavities of the multi-cavity solutions respectively. Our numerical
experiments show that $v_2/v_1 \approx 1$ for the axisymmetric numerical cavity solution
for all $\lambda \in [1.0, 1.4]$, and for non-axisymmetric cavity solutions
for all $\lambda \in [1.0, 1.175]$.
Figure~\ref{volume ratio} shows that the volume ratio $v_2/v_1$ of the
right dominant numerical cavity solution grows fast for $\lambda > 1.18$.
Hence we are able to claim that the critical $\lambda_c \in (1.175,1.180)$, and
the greater volume ratio cavity solution is increasingly energetically favorable
for $\lambda>\lambda_c$.

\section{Concluding remarks}

The mixed finite element method introduced in this paper on multi-cavity growth
problem in incompressible nonlinear elasticity is analytically proved to be locking-free
and convergent, and numerically verified to be efficient. In fact, the method enables us
to find a new bifurcation phenomenon in the multi-cavity growth problem.

It is of great interest to further study the bifurcation phenomena,
including those found by Lian and Li in \cite{LianLi2012}, and especially
to explore their relationship with the material failure mechanism.

\bibliographystyle{plain}

\setlength{\bibsep}{1ex}

\end{document}